\documentclass[11pt]{article}
\usepackage[dvips]{graphicx}
\usepackage{bm}
\usepackage{amsmath,amsthm,amssymb,citesort}
\newtheorem{thm}{Theorem}[section]
\newtheorem{theorem}[thm]{Theorem}

\newtheorem{lemma}[thm]{Lemma}

\newtheorem{claim}[thm]{Claim}
\newtheorem{corollary}[thm]{Corollary}

\newtheorem{problem}{Problem}

\DeclareMathOperator{\rank}{rank}

\setlength{\itemsep}{-10cm}
\evensidemargin -0in
\oddsidemargin -0in
\topmargin -0.445in
\textwidth 6.3in
\textheight 8.8in

\newcommand{\bp}{{\bm p}}
\newcommand{\bv}{{\bm v}}
\newcommand{\bq}{{\bm q}}
\newcommand{\br}{{\bm r}}
\newcommand{\bh}{{\bm h}}
\newcommand{\bal}{{\bm \alpha}}
\newcommand{\bmm}{{\bm m}}

\newcommand{\Bvector}[2]{\stackrel{#1}{\mathstrut #2}}

%%%%%%%%%%%%%%%%%%%%%%%%%%%%%%%%%%%%%%%%%%%%%%%%%%%%%%%%%%%%%%%%%%%%%%%

\begin{document}
\title{Generic Rigidity Matroids with Dilworth Truncations}
\author{Shin-ichi Tanigawa\thanks{Research Institute for Mathematical Sciences, Kyoto University. E-mail:{\tt tanigawa@kurims.kyoto-u.ac.jp}}}
\date{\today}
\maketitle
\begin{abstract}
We prove that the linear matroid that defines the generic rigidity of $d$-dimensional body-rod-bar frameworks
(i.e., structures consisting of disjoint bodies and rods mutually linked by bars)
can be obtained from  the union of ${d+1 \choose 2}$ copies of a graphic matroid by applying variants of Dilworth truncation operations $n_r$ times,
where $n_r$ denotes the number of rods.
This result leads to an alternative proof of Tay's combinatorial characterizations of
the generic rigidity of  rod-bar frameworks and that of identified body-hinge frameworks.
\end{abstract}

%{
%%
%\setcounter{footnote}{0}
%\def\thefootnote{\arabic{footnote}}
%%
%\footnotetext[1]{Supported by the project {\em  New Horizons in Computing},
%Grant-in-Aid for Scientific Research on Priority Areas, NEXT Japan.}
%\footnotetext[2]{Supported by Grant-in-Aid for JSPS Research Fellowships for Young Scientists.}
%Supported by JSPS Grant-in-Aid for Scientific Research on priority areas of New Horizons in Computing.}
%}
%

%\clearpage

\section{Introduction}

One of the main topics in rigidity theory is to reveal a
combinatorial characterization of the generic rigidity of frameworks.
Celebrated  Laman's theorem~\cite{laman:Rigidity:1970} asserts that
a 2-dimensional {\em bar-joint framework} (Fig.~\ref{fig:frameworks}(a)) is minimally rigid on a generic joint-configuration if and only if 
the graph $G=(V,E)$ obtained by regarding each joint as a vertex and each bar as an edge satisfies the following counting condition:
$|E|=2|V|-3$ and $|F|\leq 2|V(F)|-3$ for any nonempty $F\subseteq E$, where $V(F)$ denotes the set of vertices spanned by $F$.
However, in spite of exhausting efforts so far, the 3-dimensional counterpart has not been obtained yet (see, e.g.,\cite{jackson2006rank,Whitley:1997,whiteley:hand}).

A common strategy to deal with a difficult problem in graph theory is to restrict a graph class,
and several partial results are also known for the problem of characterizing 3-dimensional generic rigidity, 
for, e.g., triangulations~\cite{gluck,Whitley:1997}, bipartite graphs~\cite{whiteley:1984}, 
sparse graphs~\cite{jackson2006rank}, some minor closed classes~\cite{nevo2007}, the squares of graphs~\cite{molecular}.
In rigidity theory, it is also reasonable  to consider special types of structural models.
Tay~\cite{tay:84} considered a {\em body-bar framework} (Fig.~\ref{fig:frameworks}(b)) that consists of rigid bodies linked by bars.
He  proved that, if we represent the underlying graph by identifying each vertex with each body and each edge with each bar,
a body-bar framework is generically rigid in $\mathbb{R}^3$ if and only if 
the underlying graph contains six edge-disjoint spanning trees.
Tay~\cite{tay:89,tay1991linking} and Whiteley~\cite{whiteley:88} independently proved that, even for the {\em body-hinge} models (Fig.~\ref{fig:frameworks}(c)),
the same combinatorial characterization is true.
Specifically, a body-hinge framework is a structure consisting of rigid bodies connected by hinges.
Its underlying graph is represented by identifying each body with a vertex and each hinge with an edge.
In this setting, Tay-Whiteley's theorem asserts that a body-hinge framework is generically rigid in $\mathbb{R}^3$ if and only if
the graph obtained by duplicating each edge by five parallel copies contains six edge-disjoint spanning trees. 
Jackson and Jord{\'a}n~\cite{Jackson:07} further discuss the relation of generic rigidity of the {\em body-bar-hinge} model to the forest-packing problem in undirected graphs. 

Although it is barely mentioned, Tay's work was actually done in more general setting.
An {\em identified body-hinge framework} is a body-hinge framework in which each hinge allows to connect more than two bodies.
Historically, a combinatorial characterization of identified body-hinge frameworks was first conjectured by Tay and Whiteley in \cite{tay:whiteley:84},
and Tay affirmatively solved the conjecture in \cite{tay:89} as a by-product of his combinatorial characterization of {\em rod-bar frameworks}. 
A rod-bar framework  is a structure consisting of disjoint rods linked by bars in $\mathbb{R}^3$ (Fig.~\ref{fig:frameworks}(d)).
Each bar connects between two rods, and each rod is allowed to be incident to several distinct bars.
This structural model naturally comes up  from body-bar frameworks by regarding each rod as a degenerated 1-dimensional body.
%As observed in \cite[Section~7]{tay:89}, an identified body-hinge framework can be regarded as a special case of a rod-bar framework.
%Indeed, Tay's theorem on identified body-hinge frameworks is just a corollary of his combinatorial characterization of rod-bar frameworks 
%(see the statements given in Corollary~\ref{cor:rod_bar} and Corollary~\ref{coro:body_hinge}).

\begin{figure}[t]
\centering
\begin{minipage}{0.24\textwidth}
\centering
\includegraphics[width=0.75\textwidth]{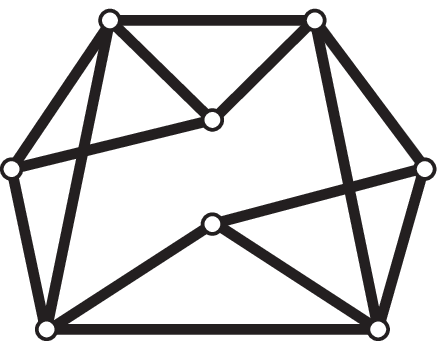}
\par 
(a)
\end{minipage}
\begin{minipage}{0.24\textwidth}
\centering
\includegraphics[width=0.9\textwidth]{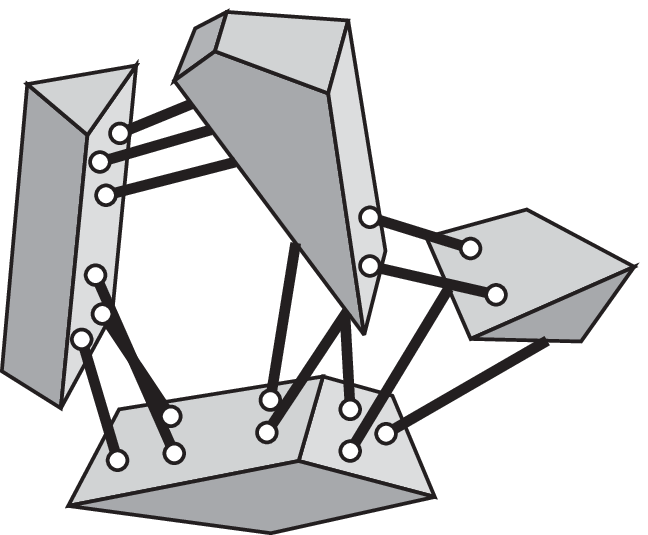}
\par 
(b)
\end{minipage}
\begin{minipage}{0.24\textwidth}
\centering
\includegraphics[width=0.9\textwidth]{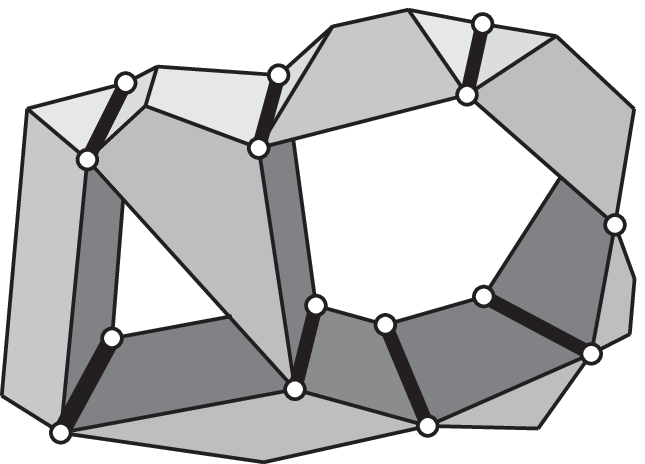}
\par 
(c)
\end{minipage}
\begin{minipage}{0.24\textwidth}
\centering
\includegraphics[width=0.9\textwidth]{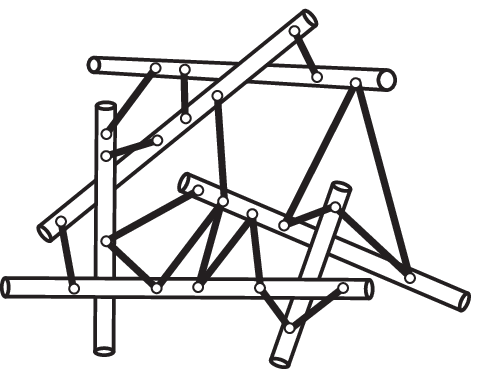}
\par 
(d)
\end{minipage}
\caption{(a)2-dimensional bar-joint framework, (b)body-bar framework, (c)body-hinge framework, and (d)rod-bar framework.}
\label{fig:frameworks}
\end{figure}

Unfortunately, Tay's proof is based on a Henneberg-type graph construction with intricate and long analysis 
(the combinatorial part now follows from the recent result by Frank and Szeg{\"o}~\cite{frank2003constructive}),
and the combinatorics behind rigidity of rod-bar frameworks has not been understood well.
To shed light on Tay's result, this paper provides a new proof of the combinatorial characterization of rod-bar frameworks. 

We actually cope with a more general structural model, {\em body-rod-bar frameworks},
and prove that the linear matroid defining its generic rigidity is equal to a counting matroid defined on the underlying graphs 
(Theorem~\ref{theorem:main} and Corollary~\ref{cor:body_rod_bar}).
Our proof technique is inspired by the idea of Lov{\'a}sz and Yemini given in \cite{lovasz:1982}.
They proved, as a new proof of Laman's theorem, that 
the linear matroid that defines the generic rigidity of 2-dimensional bar-joint frameworks
can be obtained from the union of two copies of a graphic matroid by Dilworth truncation. 
Roughly speaking, Dilworth truncation is an operation to construct a new linear matroid from old one,
by restricting the domain of entries of each vector to a generic hyperplane (see Subsection~\ref{subsec:truncation} for the definition).
The main difference between our situation and that of Lov{\'a}sz and Yemini is that we need to apply such truncation operations more than once 
(while they used it only once). 
Indeed, it is not trivial to keep up the representation of the resulting matroid 
when applying  Dilworth truncation operations several times, as each hyperplane must be inserted in ``generic'' position relative to the preceding hyperplanes.
We will overcome the difficulty by extending an idea of Lov{\'a}sz~\cite{lovasz:1977} 
so that each truncation is performed within a designated subspace.

A bar-joint framework can be considered as a body-bar framework consisting of $0$-dimensional bodies.
As combinatorial properties of body-bar frameworks with $3$-dimensional bodies are well understood~\cite{tay:84,white:whiteley:87,whiteley:88} in $\mathbb{R}^3$,
it is then natural to consider body-bar frameworks with $1$-dimensional bodies (i.e., rods) towards a combinatorial characterization of bar-joint frameworks.
Our proof explicitly describes how each $3$-dimensional body can be replaced by a $1$-dimensional body by the use of truncations.

%
%In general dimension $d$, a rod is defined as a $(d-2)$-dimensional affine space\footnote{Tay called a rod-bar framework as a $(d-2,2)$-framework.}. 
%In particular, in 2-dimension case rod-bar frameworks coincide with  bar-joint frameworks
%and hence our proof gives a new proof of Laman's theorem.  
%Also, we believe that our proof technique is so powerful that 
%it can be applied to more wide range of truncated matroids appeared in combinatorial geometry (see e.g.~\cite{Whitley:1997}).

The paper is organized as follows.
In Section~\ref{sec:flats}, we first review (poly)matroids induced by submodular functions, 
and then review two classical techniques proposed by Lov{\'a}sz~\cite{lovasz:1977}:
the first one shows how to obtain  a maximum matroid from a polymatroid defined by a family of flats in projective space,
and the second one is Dilworth truncation.
In Section~\ref{sec:body_bar}, we provide a proof of a combinatorial characterization of body-bar frameworks by Tay~\cite{tay:84}
from the view point of matroids of flat families (discussed in Section~\ref{sec:flats}).
Our main result is Section~\ref{sec:rod_bar},
where we prove a combinatorial characterization of body-rod-bar frameworks.
% based on the generalized Dilworth truncation proposed in Section~\ref{sec:extension}.
In Section~\ref{sec:body_hinge}, we will discuss identified body-hinge frameworks and several unsolved problems.
As another application of the Dilworth truncation, in Section~\ref{sec:direction}, we provide a direct proof of the combinatorial characterization of
$d$-dimensional direction-rigidity given by Whiteley~\cite[Theorem 8.2.2]{Whitley:1997}.
We believe that our proof technique is so powerful that 
it can be applied to more wide range of truncated matroids appeared in combinatorial geometry (see, e.g.,~\cite{Whitley:1997}).

We conclude introduction by listing some notation used throughout the paper.
For a vector space $W=\mathbb{R}^k$, let $\mathbb{P}(W)$ denote the projective space $\mathbb{P}^{k-1}$ associated with $W$.  
For a vector $\bv=(v^1,\dots, v^k)\in W$, the projective point associated with $\bv$ is denoted by $[\bv]=[v^1,\dots, v^k]\in \mathbb{P}(W)$.
For a flat $A$ in $\mathbb{P}(W)$, 
the {\em rank} of $A$ is defined by $\rank(A)=\dim W'$, where $W'$ is the linear subspace of $W$ associated with $A$. 
For a finite family ${\cal A}$ of flats, the {\em span} of ${\cal A}$ is denoted by $\overline{{\cal A}}$.
${\cal A}$ is called {\em disconnected} if there is a partition $\{{\cal A}_1,{\cal A}_2\}$ of ${\cal A}$ into nonempty subsets
such that $\rank (\overline{{\cal A}})=\sum_{i=1,2}\rank(\overline{{\cal A}_i})$ (equivalently, $\overline{{\cal A}_1}\cap \overline{{\cal A}_2}=\emptyset$).
%In such a  case, we write $\overline{\cal A}=\overline{{\cal A}_1}\oplus \overline{{\cal A}_2}$.
Otherwise ${\cal A}$ is said to be {\em connected}.  
(Note that a singleton set is connected.) 

We consider a finite graph $G=(V,E)$ that may contain parallel edges but no loop.
If $G$ has neither parallel edges nor a loop, $G$ is said to be {\em simple}.
We sometimes use notation $V(G)$ and $E(G)$ to denote the sets of vertices and edges of $G$, respectively.
For $v\in V$, let $\delta_G(v)$ be the set of edges incident to $v$ in $G$.
We say that $F\subseteq E$ {\em spans} $v\in V$ if $v$ is incident to some edge of $F$. 
For $F\subseteq E$, $V(F)$ denotes the set of vertices spanned by $F$.

\section{Preliminaries}
\label{sec:flats}
\subsection{Polymatroids}
\label{subsec:polymatroids}
Let $E$ be a finite set. A function $\mu:2^E\rightarrow \mathbb{R}$ is called {\em submodular}
if $\mu(X)+\mu(Y)\geq \mu(X\cup Y)+\mu(X\cap Y)$ for every $X,Y\subseteq E$.
$\mu$ is called {\em monotone} if $\mu(X)\leq \mu(Y)$ for every $X\subseteq Y$. 

Suppose $\mu:2^E\rightarrow \mathbb{Z}$ is an integer-valued function on $E$ satisfying $\mu(\emptyset)=0$.
The pair $(E,\mu)$ is called a {\em polymatroid}  if $\mu$ is monotone and submodular,
and $\mu$ is called the {\em rank function} of $(E,\mu)$.
It is particularly called a {\em matroid} if $\mu$ further satisfies  $\mu(e)\leq 1$ for every $e\in E$.
$F\subseteq E$ is called {\em independent} if $|F|=\mu(F)$, and a maximal independent set and a minimal dependent set are called a {\em base} and a {\em circuit}, respectively.
An element $e\in E$ is called a {\em coloop} if every base contains $e$.
 
\subsection{Submodular functions and induced polymatroids}
Suppose $\mu:2^E\rightarrow \mathbb{Z}$ is a monotone submodular function such that $\mu(F)\geq 0$ for every nonempty $F\subseteq E$ (but $f(\emptyset)<0$ is allowed).
We define $\hat{\mu}:2^E\rightarrow \mathbb{Z}$ by
\begin{equation}
\label{eq:g_hat}
\hat{\mu}(F)=\min\{\mbox{$\sum_{i=1}^k$}\mu(F_i)\} \qquad  (F\subseteq E)
\end{equation}
where the minimum is taken over all partitions $\{F_1,\dots,F_k\}$ of $F$ into nonempty subsets.
It is known that $\hat{\mu}$ is a monotone submodular function satisfying $\hat{\mu}(\emptyset)=0$ (see, e.g.,\cite[Chapter 48]{Schriver} or \cite{fujishige}),
and hence the pair $(E,\hat{\mu})$ forms a polymatroid.
It is also known that $\hat{\mu}$ is the unique largest 
among all monotone submodular functions $\mu'$ satisfying $0\leq \mu'(F)\leq \mu(F)$ for each $F\subseteq E$.

Edmonds and Rota~\cite{edmonds:1966} observed that a monotone submodular function $\mu:2^E\rightarrow \mathbb{Z}$
{\em induces} a matroid $(E,r_{\mu})$ on $E$,
where $F\subseteq E$ is independent if and only if $|F'|\leq \mu(F')$ for every nonempty $F'\subseteq F$ (see also \cite{pym}).
%We denote it by , denoted by ${\cal M}_g=(E,r_g)$,
Observe that this matroid takes the maximum rank among those satisfying $r_{\mu}(F)\leq \min\{\mu(F),|F|\}$ for every nonempty $F\subseteq E$,
and indeed the rank function $r_{\mu}$ can be written as
\begin{equation}
\label{eq:g_matroid}
r_{\mu}(F)=\min_{F_0\subseteq F}\{|F_0|+\hat{\mu}(F\setminus F_0)\} \qquad (F\subseteq E)
\end{equation}
(see, e.g.,\cite[Section 44.6a]{Schriver}).
Namely,
\begin{equation}
\label{eq:matroid_rank}
\mbox{$r_{\mu}(F)=\min\{|F_0|+\sum_{i=1}^k \mu(F_i)\}$} \qquad (F\subseteq E)
\end{equation}
where the minimum is taken over all partitions $\{F_0,F_1,\dots, F_k\}$ of $F$ such 
that $F_1,\dots, F_k$ are nonempty (and $F_0=\emptyset$ is allowed).
Geometric interpretations of these results will be discussed in the next two subsections.
More detailed descriptions on general (poly)matroids can be found in, e.g.,~\cite{Schriver,fujishige,oxley}.

\subsection{Generic matroids}
\label{subsec:points}
Let $E$ be a finite set.
We associate each element $e\in E$ with a flat $A_e$ in a real projective space,
and let ${\cal A}=\{A_e: e\in E\}$.
Also, for $F\subseteq E$, we denote $\{A_e\in {\cal A}: e\in F\}$ by ${\cal A}_F$. 
If we define a rank function $\rank_{\cal A}:2^E\rightarrow \mathbb{Z}$ by $\rank_{\cal A}(F)=\rank(\overline{{\cal A}_F})$ for $F\subseteq E$,
the pair $(E,\rank_{\cal A})$ forms a linear polymatroid, which is denoted by ${\cal PM}({\cal A})$.
A polymatroid turns out to be a matroid  by bounding the rank of each element by one.
Below, we review a geometric method for getting a maximum linear matroid from the linear polymatroid ${\cal PM}({\cal A})$.

We shall associate  a {\em representative point} $x_e\in A_e$ with each $A_e\in {\cal A}$.
Let us denote $\{x_e : e\in E\}$ by $X$.
The set $X$ of representative points is said to be in {\em generic position}
if, for every $X'\subseteq X$ and for every $x_e\in X'$,
\begin{equation}
\label{eq:point_generic}
x_e\in \overline{X'-x_e} \Rightarrow A_e\subseteq \overline{X'-x_e}.
\end{equation}
It is not difficult to see that, 
for any finite flat family ${\cal A}$, the set $X$ of representative points can be taken to be in generic position;
for any $x_e\in X$, $A_e\setminus \bigcup\{\overline{X'}: X'\subseteq X-x_e \text{ with } A_e\not\subset \overline{X'}\}$ forms a dense open subset of $A_e$;
hence, if $x_e\in \overline{X'}$ for some $X'\subseteq X-x_e$ with  $A_e\not\subset \overline{X'}$, 
then by continuously (and slightly) moving $x_e$ on $A_e$ it can avoid $\overline{X'}$ without creating a new violation 
for generic position.

For $F\subseteq E$,  the dimension of the linear subspace spanned by $\{x_e: e\in F\}$ 
is defined as the {\em rank} of $F$ (with respect to $X$), and we denote it 
by $\rank_X(F)$, i.e., $\rank_X(F)=\rank(\overline{\{x_e: e\in F\}})$.
The linear matroid $(E,\rank_X)$ is called a matroid associated with ${\cal A}$.
\begin{theorem}[Lov{\'a}sz~\cite{lovasz:1977}]
\label{theorem:flat_matroid}
Let ${\cal A}=\{A_e: e\in E\}$ be a finite family of flats, and $X$ be a set of representative points of ${\cal A}$ in generic position.
Then, 
\begin{equation}
\label{eq:flat_matroid_rank}
\rank_X(E)=\min_{F\subseteq E} \{|E\setminus F|+\rank(\overline{{\cal A}_{F}})\}.
\end{equation}
\end{theorem}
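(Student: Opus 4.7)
The plan is to identify the linear matroid $(E,\rank_X)$ with the matroid induced (in the sense of Edmonds--Rota, equation (\ref{eq:g_matroid})) by the submodular function $\mu(F):=\rank(\overline{{\cal A}_F})$ on $2^E$. Once this identification is in hand, (\ref{eq:flat_matroid_rank}) follows directly from (\ref{eq:g_matroid}): $\mu$ is subadditive because $\overline{{\cal A}_{F_1\cup F_2}}$ is the projective join of $\overline{{\cal A}_{F_1}}$ and $\overline{{\cal A}_{F_2}}$, so $\hat{\mu}(F)=\mu(F)$ on nonempty $F$ (the one-block partition already attains the minimum in (\ref{eq:g_hat})); specializing (\ref{eq:g_matroid}) to $F=E$ and rewriting $|F_0|+\mu(E\setminus F_0)$ with $F:=E\setminus F_0$ then reproduces the right-hand side of (\ref{eq:flat_matroid_rank}).

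For the identification itself, I need to check that $F\subseteq E$ is linearly independent with respect to $X$ if and only if $|F'|\le\mu(F')$ for every nonempty $F'\subseteq F$, which is the Edmonds--Rota characterization of independence in $(E,r_\mu)$. One direction is immediate: if $\{x_e:e\in F\}$ is independent, then for any nonempty $F'\subseteq F$, $|F'|=\rank(\overline{\{x_e:e\in F'\}})\le \rank(\overline{{\cal A}_{F'}})$ since each $x_e\in A_e$. The reverse direction is the core of the theorem, and the only place where the generic-position hypothesis (\ref{eq:point_generic}) is used.

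For this reverse direction I would argue by contradiction: suppose every nonempty $F'\subseteq F$ satisfies $|F'|\le \mu(F')$, yet $\{x_e:e\in F\}$ is linearly dependent. Pick a minimal dependent subset $F^*\subseteq F$. For every $e_0\in F^*$, removing $x_{e_0}$ leaves an independent set, so $x_{e_0}\in \overline{\{x_e:e\in F^*\setminus\{e_0\}\}}$; invoking (\ref{eq:point_generic}) with $X'=\{x_e:e\in F^*\}$ upgrades this to $A_{e_0}\subseteq \overline{\{x_e:e\in F^*\setminus\{e_0\}\}}\subseteq \overline{\{x_e:e\in F^*\}}$. Running $e_0$ over all of $F^*$ yields $\overline{{\cal A}_{F^*}}\subseteq \overline{\{x_e:e\in F^*\}}$, a flat of rank only $|F^*|-1$, contradicting $|F^*|\le \mu(F^*)$. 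This generic-position step, where a minimal linear dependence among the representative points forces each flat $A_{e_0}$ to collapse into the span of the remaining points, is the only real obstacle; everything else is routine bookkeeping with the submodular formulas (\ref{eq:g_hat}) and (\ref{eq:g_matroid}).
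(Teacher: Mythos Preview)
Your proof is correct. The paper itself does not supply a proof of Theorem~\ref{theorem:flat_matroid}; it is quoted from Lov\'asz~\cite{lovasz:1977}, with only the easy ``$\leq$'' direction remarked on afterward. Your argument---identifying the linear matroid $(E,\rank_X)$ with the Edmonds--Rota matroid induced by $\mu(F)=\rank(\overline{{\cal A}_F})$ by showing the two matroids have the same independent sets, and then reading off the rank formula from (\ref{eq:g_matroid}) together with the subadditivity observation $\hat\mu=\mu$---is complete and is the natural route. The one place where genericity is needed, namely the step where a minimal linear dependence among the $x_e$ is upgraded via (\ref{eq:point_generic}) to the containment $\overline{{\cal A}_{F^*}}\subseteq\overline{\{x_e:e\in F^*\}}$, is handled correctly.
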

%\begin{proof}
%We write down the proof of Lov{\'a}sz since it is short and reveals the underlying structure.
%
%It is easy to see ``$\leq$'';
%for any $F\subseteq E$, we have 
%$\rank_X(E)\leq \rank_X(E\setminus F)+\rank_X(F)\leq |E\setminus F|+\rank(\overline{\{A_e\in{\cal A}: e\in F\}})$.
%
%Let us show the existence of $F\subseteq E$ that attains the equality.
%The proof is done by induction.
%If there exists an element $e\in E$ such that $\rank_X(E)=\rank_X(E-e)+1$.
%Then, by induction, we have a subset $F\subseteq E-e$ which attains the equality of (\ref{eq:flat_matroid_rank}) for $E-e$.
%This $F$ also attains the equality for $E$.
%
%Suppose every $e\in E$ satisfies  $\rank_X(E)=\rank_X(E-e)$.
%Then, $x_e\in \overline{X-x_e}$ for every $e\in E$, 
%and hence $A_e\subseteq \overline{X-x_e}\subseteq \overline{X}$ since $X$ is in generic position.
%This implies ${\cal A}\subseteq \overline{X}$, and we consequently obtain $\rank_X(E)=\rank(\overline{\cal A})=\rank(\overline{{\cal A}_E})$.
%\end{proof}
By restricting the argument to  $F\subseteq E$, we also have 
$\rank_X(F)=\min_{F'\subseteq F} \{|F\setminus F'|+\rank(\overline{{\cal A}_{F'}})\}$.
The rank of the linear matroid associated with ${\cal A}$ does depend on the choice of $X$.
However, Theorem~\ref{theorem:flat_matroid} implies that it attains the maximum and is invariant when $X$ is in generic position. 
(Notice that ``$\leq$'' direction of (\ref{eq:flat_matroid_rank}) holds even though $X$ is not in generic position;
For any $F\subseteq E$, $\rank_X(E)\leq \rank_X(E\setminus F)+\rank_X(F)\leq |E\setminus F|+\rank(\overline{\{A_e\in{\cal A}: e\in F\}})$.)
This motivates us to define the {\em generic} matroid.
The {\em generic matroid associated with ${\cal A}$}, denoted ${\cal M}({\cal A})$, 
is defined to be ${\cal M}({\cal A})=(E,\rank_X)$ with $X$ in generic position.

\subsection{Dilworth truncation}
\label{subsec:truncation}
Let ${\cal A}$ be a finite set of flats.
We now consider restricting flats of ${\cal A}$ to a generic hyperplane.
A hyperplane $H$ is called {\em generic relative to ${\cal A}$} if it satisfies the following condition\footnote{
Lov{\'a}sz claimed Theorem~\ref{theorem:truncation} with a much weaker assumption;
he defined that a hyperplane $H$ is  generic if,
for any subsets $X, Y$ and $Z$ of ${\cal A}$ satisfying 
$\overline{(X\cap H)\cup Y}\cap \overline{(X\cap H)\cup Z}\subseteq H$, we have $\overline{(X\cap H)\cup Y}\cap \overline{(X\cap H)\cup Z}\subseteq \overline{X\cap H}$. 
Theorem~\ref{theorem:truncation} however fails in this setting.
For example, suppose the underlying projective space is 3-dimensional, 
and ${\cal A}$ consists of three distinct hyperplanes $\{A_1,A_2,A_3\}$ such that $A_1\cap A_2=A_2\cap A_3=A_3\cap A_1$ is a line $l$. 
If we take $H$ as a hyperplane distinct from $A_i$ but containing $l$, 
$H$ satisfies the condition to be generic.
However, the left hand side of (\ref{eq:truncation}) is $\rank(\overline{\{A\cap H: A\in {\cal A}\}})= \rank(l)= 2$ 
while the right hand side is equal to $\rank(\overline{\{A_1,A_2,A_3\}}) -1=3$.};
for any $A_1,A_2\in {\cal A}$ and any ${\cal F}\subseteq \{A\cap H: A\in {\cal A}\}$, 
\begin{equation}
\label{eq:hyperplane_generic}
\overline{(A_1\cap H)\cup {\cal F}}\cap \overline{(A_2\cap H)\cup {\cal F}}\neq \overline{{\cal F}} 
\quad \Rightarrow \quad \overline{A_1\cup {\cal F}}\cap \overline{A_2\cup {\cal F}}\not\subset H.
\end{equation}
Although the detail is omitted, it can be verified  that almost all hyperplanes are generic relative to ${\cal A}$.
For a family ${\cal A}$ of flats and a hyperplane $H$, we shall abbreviate
$\{A\cap H : A\in {\cal A}\}$ as ${\cal A}\cap H$.
The following result is also done by Lov{\'a}sz~\cite{lovasz:1977}.
\begin{theorem}[Lov{\'a}sz~\cite{lovasz:1977}]
\label{theorem:truncation}
Let ${\cal A}$ be a finite family of flats in a real projective space, and $H$ be a generic hyperplane relative to ${\cal A}$.
Then, 
\begin{equation}
\label{eq:truncation}
\rank(\overline{{\cal A}\cap H}) = \min\{\mbox{$\sum_{i=1}^k$} (\rank(\overline{{\cal A}_i})-1)\},
\end{equation}
where the minimum is taken over all partitions $\{{\cal A}_1,\dots, {\cal A}_k\}$ of ${\cal A}$ into nonempty subsets. 
\end{theorem}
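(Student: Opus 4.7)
Plan. I would prove the equality in (\ref{eq:truncation}) by establishing the two inequalities separately. The upper bound is routine: for any partition $\{{\cal A}_1,\dots,{\cal A}_k\}$ of ${\cal A}$, subadditivity of projective rank gives
\begin{equation*}
\rank(\overline{{\cal A}\cap H})\;\le\;\sum_{i=1}^k\rank(\overline{{\cal A}_i\cap H}),
\end{equation*}
and for each $i$, $\overline{{\cal A}_i\cap H}\subseteq \overline{{\cal A}_i}\cap H$ is a proper subspace of $\overline{{\cal A}_i}$ because the genericity condition (\ref{eq:hyperplane_generic}), applied with $A_1=A_2$ chosen from ${\cal A}_i$ and ${\cal F}=\emptyset$, rules out $\overline{{\cal A}_i}\subseteq H$; hence $\rank(\overline{{\cal A}_i\cap H})\le\rank(\overline{{\cal A}_i})-1$, and summing over $i$ finishes this direction.

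The reverse inequality is the substance of the theorem, and I would attack it by induction on $|{\cal A}|$. The base case $|{\cal A}|=1$ is immediate from $\rank(\overline{A\cap H})=\rank(A)-1$, so suppose $|{\cal A}|=n\ge 2$, fix $A\in {\cal A}$, and set ${\cal A}'={\cal A}\setminus\{A\}$. With
\begin{equation*}
t:=\rank\bigl((A\cap H)\cap\overline{{\cal A}'\cap H}\bigr),
\end{equation*}
the join identity gives $\rank(\overline{{\cal A}\cap H})=\rank(\overline{{\cal A}'\cap H})+\rank(A)-1-t$. By the inductive hypothesis, choose an optimal partition $P'=\{{\cal A}'_1,\dots,{\cal A}'_m\}$ of ${\cal A}'$ with $\sum_i(\rank(\overline{{\cal A}'_i})-1)=\rank(\overline{{\cal A}'\cap H})$. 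When $t=0$, the partition $P'\cup\{\{A\}\}$ of ${\cal A}$ has sum exactly $\rank(\overline{{\cal A}\cap H})$, completing the step. When $t\ge 1$, I pick a minimal ${\cal B}\subseteq {\cal A}'$ with $(A\cap H)\cap\overline{{\cal B}\cap H}\ne\emptyset$ and any $B\in {\cal B}$, and apply (\ref{eq:hyperplane_generic}) with $A_1=A$, $A_2=B$, ${\cal F}=({\cal B}\setminus\{B\})\cap H$: the minimality of ${\cal B}$ ensures the hypothesis is met, and the conclusion delivers a point of $\overline{A\cup {\cal F}}\cap\overline{B\cup {\cal F}}$ that lies outside $H$ and hence outside $\overline{{\cal F}}$. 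This ambient-space rank defect in $\{A\}\cup {\cal B}$ lets me modify $P'$ by coalescing $A$ together with those blocks of $P'$ meeting ${\cal B}$ into a single block, producing a partition of ${\cal A}$ whose value is at most $\rank(\overline{{\cal A}\cap H})$.

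The main obstacle lies in the $t\ge 2$ regime of this inductive step, where condition (\ref{eq:hyperplane_generic}) produces only one unit of ambient-space dependency per application while $t$ units of excess intersection inside $H$ must be accounted for. Naively iterating the lifting runs into exactly the pitfall highlighted in the footnote: the family ${\cal F}$ used at each stage must reflect all earlier liftings, so weaker ``static'' notions of genericity would be insufficient. I expect the cleanest route is a secondary induction on $t$ (or equivalently on $\rank(\overline{{\cal A}})-\rank(\overline{{\cal A}\cap H})$) nested inside the primary induction on $|{\cal A}|$, at each stage using the minimality of ${\cal B}$ to peel off exactly one unit of dependence and pass to a smaller truncation problem where the induction hypothesis takes over.
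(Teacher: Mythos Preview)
The paper does not prove Theorem~\ref{theorem:truncation}; it is quoted from Lov{\'a}sz~\cite{lovasz:1977} without argument (only the genericity hypothesis is corrected in the footnote). So there is no in-paper proof to compare your proposal against. That said, your overall architecture---routine upper bound, then induction on $|{\cal A}|$ for the lower bound---is a recognisable route, and your use of (\ref{eq:hyperplane_generic}) with a minimal ${\cal B}$ to lift a dependency out of $H$ is the right reflex.

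However, the proposal has a genuine gap already at $t=1$, not only at $t\geq 2$. After you obtain a point of $\overline{A\cup {\cal F}}\cap\overline{B\cup {\cal F}}$ outside $H$, you assert an ``ambient-space rank defect in $\{A\}\cup {\cal B}$'' and then coalesce $A$ with all blocks of $P'$ meeting ${\cal B}$. But the defect you have produced lives in the family $\{A,B\}\cup {\cal F}$, where ${\cal F}=({\cal B}\setminus\{B\})\cap H$ consists of \emph{truncated} flats; you have not shown any nontrivial dependence among the \emph{untruncated} spans $\overline{{\cal A}'_{j_1}},\dots,\overline{{\cal A}'_{j_s}}$ and $A$, which is what you would need to bound $\rank(\overline{\mathcal C})$ for the coalesced block $\mathcal C$. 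Concretely, you must show
\[
\rank(\overline{\mathcal C})-1 \;\leq\; (\rank(A)-1)+\sum_{l=1}^{s}\bigl(\rank(\overline{{\cal A}'_{j_l}})-1\bigr)\;-\;1,
\]
and nothing in your lifting step controls $\rank(\overline{\mathcal C})$ in terms of the block ranks. For $t\geq 2$ you yourself flag the difficulty, but the issue is structural rather than merely iterative: each application of (\ref{eq:hyperplane_generic}) gives information about spans that mix truncated and untruncated flats, while the partition value you are trying to bound involves only untruncated spans.

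A cleaner organisation---and the one mirrored later in the paper's proof of Lemma~\ref{lemma:last_truncation}---is to first pass to the connected components of ${\cal A}\cap H$, so that it suffices to show $\rank(\overline{{\cal A}\cap H})=\rank(\overline{{\cal A}})-1$ under the assumption that ${\cal A}\cap H$ is connected. Connectivity is exactly the hypothesis that lets you chain applications of (\ref{eq:hyperplane_generic}) without having to track partition values along the way, and it removes the need to coalesce blocks of an optimal $P'$ whose structure you do not control.
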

This operation (of restricting flats to a generic hyperplane) is referred to as {\em Dilworth truncation}.
Indeed, as noted in \cite{Schriver}, Theorem~\ref{theorem:flat_matroid} and Theorem~\ref{theorem:truncation} provide geometric interpretations of the formulae (\ref{eq:g_hat}) and (\ref{eq:matroid_rank}) for linear polymatroids.

The same result was also obtained by Mason~\cite{Mason:1981,Mason:1976} from the view point of combinatorial geometry (projective matroids).
The papers of Mason~\cite{Mason:1981,Mason:1976} include examples of Dilworth truncation.

\subsection{$M$-connectivity and $P$-connectivity}
\label{sec:connectivity}
Let ${\cal M}=(E,r)$ be a matroid on a finite set $E$ with the rank function $r$.
A subset $F\subseteq E$ is called {\em $M$-connected} if, for any pair $e,e'\in F$, $F$ has a circuit of ${\cal M}$ that contains $e$ and $e'$. 
For simplicity of the description, a singleton $\{e\}$ is also considered as an $M$-connected set.
A maximal $M$-connected set is called an {\em $M$-connected component}.
%A connected component of ${\cal M}_f(G)$ (in the sence of matroid theory) is called a {\em $f$-connected component}.
It is well know that the union of two $M$-connected sets is $M$-connected if their intersection is nonempty, and thus $E$ is uniquely partitioned into $f$-connected components $E_1,\dots, E_k$ (see, e.g.,\cite[Chapter~4]{oxley}). 
Since there is no circuit intersecting two components, we have $r(E)=\sum_{i=1}^kr(E_i)$.
Alternatively, we can use it for the definition of $M$-connectivity: 
$F\subseteq E$ is $M$-connected if and only if 
there no partition $\{F_1,\dots, F_k\}$ of $F$ into at least two nonempty subsets such that $r(F)=\sum_{i=1}^k r(F_i)$.

%An $M$-connected set is called {\em trivial} if it is singleton; otherwise nontrivial. 
%We remark that $\{e\}$ is a trivial $M$-connected component 
%if and only if $e$ is either a {\em loop} (i.e., no base contains $e$) 
%or a {\em coloop} (i.e., every base contains $e$). 

The concept of connectivity can be extended to polymatroids.
Let ${\cal PM}=(E,\mu)$ be a polymatroid on a finite set $E$.
Then, $F\subseteq E$ is said to be {\em $P$-connected} 
if there is no partition $\{F_1,\dots, F_k\}$ of $F$ into at least two nonempty subsets such that $\mu(F)=\sum_{i=1}^k \mu(F_i)$.
A maximal $P$-connected set is called a {\em $P$-connected component}.
The union of two $P$-connected sets is $P$-connected if their intersection is nonempty, and thus $E$ is uniquely partitioned into $P$-connected components.
If we consider linear polymatroids, the concept of $P$-connectivity coincides with 
the connectivity of flats we introduced in the introduction.

A $P$-connected set (and similarly, an $M$-connected set) is called {\em trivial} 
if it is singleton; otherwise nontrivial.  

\section{Body-bar Frameworks}
\label{sec:body_bar}
A body-bar framework is a structure consisting of rigid bodies linked by bars (Figure~\ref{fig:frameworks}(b)).
The generic rigidity of body-bar frameworks is characterized by Tay~\cite{tay:84} (and a simpler proof was given by Whiteley~\cite{whiteley:88}).
In this section, we shall present a proof of this characterization from the viewpoint of matroids of flat families.
In the subsequent sections, $d$ denotes the dimension of frameworks, and let $D={d+1 \choose 2}$.

\subsection{Union of copies of graphic matroid}
\label{subsec:graphic}
We first review the union of copies of graphic matroid to which Tay related the generic rigidity matroid 
in the body-bar model.

\subsubsection{Graphic matroid}
Let $G=(V,E)$ be a finite undirected graph.
We denote  the {\em graphic matroid} of $G$ by ${\cal G}(G)$,
that is, the matroid induced by the monotone submodular function $g:2^E\rightarrow \mathbb{Z}$ defined by
$g(F)=|V(F)|-1$ for $F\subseteq E$.
Namely, $F\subseteq E$ is independent in ${\cal G}(G)$ if and only if $|F|\leq |V(F)|-1$ for nonempty $F\subseteq E$, and equivalently $F$ is a forest.

Let $I(G)=[a_{ij}]$ be the incidence matrix of a digraph obtained from $G$ by arbitrary assigning a
direction to each edge, i.e,
\[
 a_{ij}=\begin{cases}
1 & \text{if vertex $v_j$ is the tail of arc $e_i$} \\
-1 & \text{if vertex $v_j$ is the head of arc $e_i$} \\
0 & \text{otherwise}.
\end{cases}
\]  
It is well known that ${\cal G}(G)$ is linear
as it is represented by the row matroid of  $I(G)$.

\subsubsection{Graphic matroid union}
For a matroid ${\cal M}=(E,{\cal I})$ with a collection  ${\cal I}$ of independent sets,
the union of $D$ independent sets, i.e., $\{I_1\cup\dots \cup I_D :  I_i\in {\cal I}, i=1,\dots, D\}$, again 
forms the collection of independent sets of a matroid.
This matroid is called the {\em union} of $D$ copies of ${\cal M}$.
In the union of $D$ copies of the graphic matroid, denoted $D{\cal G}(G)$, $F\subseteq E$ is independent
if and only if $F$ can be partitioned into $D$ edge-disjoint forests.
$D{\cal G}(G)$ is indeed the matroid induced by the monotone submodular function $Dg:=D(|V(\cdot)|-1)$ defined on $E$~\cite{nash1964decomposition}.
This implies that $E$ can be partitioned into $D$ edge-disjoint spanning trees if and only if $|E|=D(|V|-1)$ and
$|F|\leq D(|V(F)|-1)$ for any nonempty $F\subseteq E$.

It is also known that $D{\cal G}(G)$ can be represented as a row vector matroid by introducing indeterminates.
For each integer $k$ with $1\leq k\leq D$, let $I^k=[a_{ij}^k]$ be a $|E|\times |V|$-matrix defined by 
\begin{equation*}
 a_{ij}^k=\begin{cases}
\alpha_{e_i}^k & \text{if vertex $v_j$ is the tail of arc $e_i$} \\
-\alpha_{e_i}^k & \text{if vertex $v_j$ is the head of arc $e_i$} \\
0 & \text{otherwise},
\end{cases}
\end{equation*}
where $\alpha_{e}^k$'s are algebraically independent indeterminates over $\mathbb{Q}$.
Denote the $|E|\times D|V|$-matrix $[I^1|I^2|\dots |I^D]$ by $DI(G)$.
Then, $D{\cal G}(G)$ is represented by $DI(G)$ (see, e.g.,~\cite{Mason:1981,whiteley:88}).

This representation gives us another way to look at $D{\cal G}(G)$.
We associate a $D$-dimensional vector space $V_u=\mathbb{R}^D$ with each vertex $u$ in the subsequent discussion,
and $V_V$ denotes the direct product of $V_u$ for all $u\in V$.
In $DI(G)$, the row associated with an edge $e=uv$ is represented by
\begin{equation}
\label{eq:e_vector}
(\Bvector{}{0},\Bvector{\cdots}{\cdots}, \Bvector{}{0},  
\Bvector{u}{\overbrace{\alpha_{e}^1,  \dots,  \alpha_{e}^D}},  \Bvector{}{0}, \Bvector{\cdots}{\cdots}, 
\Bvector{}{0}, 
\Bvector{v}{\overbrace{-\alpha_{e}^1, \dots, -\alpha_{e}^D}}, \Bvector{}{0}, \Bvector{\cdots}{\cdots}, \Bvector{}{0}),
\end{equation}
where we changed the column ordering so that  the entries associated with each vertex form a block 
(and throughout the subsequent discussions we will refer to this ordering).
When looking $\alpha_{e}^1,\dots, \alpha_e^D$ as independent parameters in $\mathbb{R}$,
the space spanned by vectors (\ref{eq:e_vector}) form a $D$-dimensional vector space contained in 
$V_u\times V_v$.
%\begin{equation}
%\label{eq:e_space}
%A_e=\{ 
%(\Bvector{}{0} \ \Bvector{\cdots}{\cdots} \ \Bvector{}{0} \ \ 
%\Bvector{u}{\alpha_{e}^1 \ \dots \ \alpha_{e}^D}\ \ \Bvector{\cdots}{0} \ \ \Bvector{\cdots}{\cdots} \ \ 
%\Bvector{\cdots}{0}\ \ 
%\Bvector{v}{-\alpha_{e}^1 \ \dots \ -\alpha_{e}^D}\ \ \Bvector{}{0} \ \ \Bvector{\cdots}{\cdots}\ \ \Bvector{}{0}):
%(\alpha_{e}^1,\dots, \alpha_{e}^D)\in \mathbb{R}^D\}.
%\end{equation}
We can identify this $D$-dimensional vector space with a $(D-1)$-dimensional flat in $\mathbb{P}(V_V)$.
We denote this flat by $A_e$ and let  ${\cal A}:=\{A_e :  e\in E\}$.
Then,  $D{\cal G}(G)$ can be considered as the generic matroid ${\cal M}({\cal A})$ associated with ${\cal A}$. 

\subsection{Generic body-bar matroids}
\label{subsec:body_bar}
\subsubsection{Pl{\"u}cker coordinates}
Throughout the paper, let $W=\mathbb{R}^{d+1}$.
For simplicity,  we shall use the standard basis $e_1,\dots, e_{d+1}$ of $W=\mathbb{R}^{d+1}$ and use the dot product as an inner product.
Also $W$ is identified with  its dual.

Recall that the exterior product $\bigwedge^k W$ of degree $k$ is a ${d+1 \choose k}$-dimensional vector space
and can be naturally identified with $\mathbb{R}^{d+1\choose 2}$ by associating 
$e_{i_1}\wedge \dots \wedge e_{i_k}$ with an element of the standard basis of $\mathbb{R}^{d+1\choose 2}$ 
for each $1\leq i_1<\dots<i_k<d+1$. 
In particular, $\bigwedge^2 W=\mathbb{R}^D$.
%An element of $\bigwedge^k W$ is called  {\em decomposable} if it can be described as the exterior product of $k$ elements in $W$. 
%We denote the exterior product $v_1\wedge v_2 \wedge \dots \wedge v_k$ simlpy by $v_{1,2,\dots,k}$.

The collection of $k$-dimensional subspaces in $W$ is called the {\em Grassmannian}, denoted $Gr(k,W)$.
The {\em Pl{\"u}cker embedding} $p:Gr(k,W)\rightarrow \mathbb{P}(\bigwedge^k W)$ is a bijection
between $k$-dimensional vector spaces $X\in Gr(k,W)$ and 
projective equivalence classes $[v_1\wedge \dots \wedge v_k]\in \mathbb{P}(\bigwedge^k W)$ of decomposable elements,
where $\{v_1,\dots, v_k\}$ is a basis of $X$.
In the subsequent discussions, we shall identify $Gr(k,W)$ and its image of the Pl{\"u}cker embedding,
and regard $Gr(k,W)$ as a subset of $\mathbb{P}(\bigwedge^k W)$.   
%The homogenious corrdinate of $[v_1\wedge \dots \wedge v_k]\in \mathbb{P}(\bigwedge^k W)$ for a fixed basis $e_1,\dots, e_{d+1}\in W$
%is called the {\em Pl{\"u}cker coordinate} of $X$.

%Suppose $W=\mathbb{R}^{d+1}$.
It is well-known that each point of $Gr(k,W)$ can be coordinatized by the so-called {\em Pl{\"u}cker coordinate} once we fix  a basis of $W$. 
If a basis $\{v_1,\dots, v_k\}$ of $X\in Gr(k,W)$ is represented by $v_i=\sum_{j=1}^{d+1}p_{ij}e_j$ 
with the $k\times (d+1)$-matrix $P=[p_{ij}]$,
then we have 
\[
v_1\wedge \dots \wedge v_k=\mbox{$\sum_{i_1<\cdots<i_k}$} \det P_{i_1,\dots, i_k} e_{i_1}\wedge \dots \wedge e_{i_k}, 
\]
where $P_{i_1,\dots, i_k}$ is the $k\times k$-submatrix of $P$ consisting of $i_j$-th columns.
Let us simply denote $p_{i_1,\dots, i_k}=\det P_{i_1,\dots, i_k}$.
The ratio of $p_{i_1,\dots, i_k}$'s for $1\leq i_1< \dots < i_k\leq d+1$ is called {\em the Pl{\"u}cker coordinate} of $X$. 
%Therefore, it is sometimes useful to index coordinates of an element of $\bigwedge^k W$ by $k$-tuples, $i_1,\dots, i_k$ with $1\leq i_1< \dots < i_k\leq d+1$.

It is well known that $[p_{i,j}]_{1\leq i<j\leq d+1}\in \mathbb{P}(\bigwedge^2 W)$ is in $Gr(2,W)$
if and only if $p_{i,j}p_{k,l}-p_{i,k}p_{j,l}+p_{i,l}p_{j,k}=0$ for $1\leq i<j<k<l\leq d+1$,
and $Gr(2,W)$ is an irreducible quadratic variety (see, e.g.,\cite{Hassett200705}).
In particular, if $d=3$, 
$Gr(2,W)$ is a non-singular quadratic variety written by
\begin{equation}
\label{eq:grassmaniann}
 \{[p_{i,j}]_{1\leq i<j\leq 4}\in \mathbb{P}(\mbox{$\bigwedge^2 W$})  : p_{1,2}p_{3,4}-p_{1,3}p_{2,4}+p_{1,4}p_{2,3}=0\}.
\end{equation}
Through the one-to-one correspondence between a $k$-dimensional linear subspace and its orthogonal complement,  
$Gr(d-1,W)$ is also an irreducible quadratic variety in $\mathbb{P}(\bigwedge^{d-1} W)$ described in the same form as $Gr(2,W)$.

Let us define a product 
$\langle \cdot , \cdot \rangle\colon\bigwedge^k W\times \bigwedge^{d+1-k}W\rightarrow \mathbb{R}$ by 
\[
 \langle \bp, \bq\rangle =\sum_{i_1<\dots <i_k} (-1)^{i_1+\dots +i_k}p_{i_1,\dots, i_k} q_{j_1,\dots, j_{d+1-k}}
\]
for $\bp=[p_{i_1\dots i_k}] \in \bigwedge^k W$  and $\bq=[q_{i_1\dots i_{d+1-k}}]\in \bigwedge^{d+1-k} W$,
where $j_1,\dots, j_{d+1-k}$ are the complement of $i_1,\dots, i_k$ in $[d+1]$ with $j_1<\dots <j_{d+1-k}$.
For example, for $d=3$ and $k=2$, we have 
$\langle \bp,\bq\rangle=p_{1,2}q_{3,4}-p_{1,3}q_{2,4}+p_{1,4}q_{2,3}+p_{2,3}q_{1,4}-p_{2,4}q_{1,3}+p_{3,4}q_{1,2}$.
In general, it has the following useful property: 
a $k$-dimensional linear subspace $X$ and a $(d+1-k)$-dimensional linear subspace $Y$ have a nonzero intersection if and only if the corresponding Pl{\"u}cker coordinates $[\bp]$ and $[\bq]$
satisfy $\langle \bp, \bq \rangle=0$. 
This is because that, if $\bp$ and $\bq$ are decomposable, 
then $\langle \bp, \bq\rangle$ is the determinant of a square matrix obtained 
by aligning composition elements of $\bp$ and $\bq$.

This product can be seen as a dot product in $\mathbb{R}^{d+1\choose k}$ through the so-called Hodge star-operator.
The Hodge star-operator is a linear operation $\ast\colon \bigwedge^k W\rightarrow \bigwedge^{d+1-k} W$ defined by 
\[
 \ast(e_{i_1}\wedge \dots \wedge e_{i_k})={\rm sign}(\sigma)  e_{j_1}\wedge \dots \wedge e_{j_{d+1-k}},
\]
where $j_1,\dots, j_{d+1-k}$ are the complement of $i_1,\dots, i_k$ in $[d+1]$  
and ${\rm sign}(\sigma)$ denotes the sign of the permutation 
$\sigma=\begin{pmatrix}i_1 & \dots & i_k & j_1 & \dots & j_{d+1-k} \\ 1 & \dots & k & k+1 & \dots & d+1 \end{pmatrix}$.
%In our situation, the Hodge star-operator just returns a $(d+1-k)$-extensor of the orthogonal complement.  
%In particular,  $\langle \alpha, \beta \rangle=0$ if and only if $\alpha \cdot \ast \beta=0$.
For example, if $d=3$ and $k=2$, $\ast \bq=(q_{3,4},-q_{2,4},q_{2,3}, q_{1,4},-q_{1,3},q_{1,2})$ for $\bq=(q_{1,2},q_{1,3},q_{1,4}, q_{2,3},q_{2,4},q_{3,4})$.

By identifying $\bigwedge^k W$ with $\bigwedge^{d+1-k} W$ through $\ast$ and identifying $\bigwedge^k W$ with  $\mathbb{R}^{d+1\choose k}$ as above, 
we may consider $\langle \cdot,\cdot\rangle$ as a dot product in $\mathbb{R}^{d+1\choose k}$.
In this way we can simply consider a dot product between $\bigwedge^k W$ and $\bigwedge^{d+1-k}$, 
where  $\bp \cdot \bq=0$  if and only if
$X\cap Y=\{0\}$, for a $k$-dimensional linear subspace $X$ and a $(d+1-k)$-dimensional linear subspace $Y$ with the Pl{\"u}cker coordinates $[\bp]$ and $[\bq]$.

For general treatments of these operations, see e.g.~\cite{barnabei1985exterior,HodgePedoe}.

%
%It is well known that a point of $\bigwedge^2 W$ is in $Gr(2,W)$ (namely, it is decomposable)
%if and only if it vanishes a system of homgenious quadratic poynomials.
%Namely, $Gr(2,W)$ is a projective variety whose ideal is generated by quadratic polynomials, so-called {\em Pl{\"u}cker relations}.
%In particular, for $d=3$, 
%$Gr(2,W)$ is the nondegenerate quadratic variety (i.e., the associated matrix describing the quadratic form is non-singular) described by
%\begin{equation}
%\label{eq:grassmaniann}
% \{[p_i]_{1\leq i\leq 6}\in \mathbb{P}(\mbox{$\bigwedge^2$} W) : p_{1}p_{4}-p_{2}p_{5}+p_{3}p_{6}=0\}.
%\end{equation}

%********
\subsubsection{Body-bar frameworks}
We shall use following conventional notation to denote body-bar frameworks and to describe infinitesimal motions.
A {\em  body-bar framework}  is a pair $(G,\bq)$, where 
\begin{itemize}
\item $G=(V,E)$ is a graph;
\item $\bq$ is a mapping called a {\em bar-configuration}:
\begin{align*}
\bq:E&\rightarrow Gr(2,W)\subseteq \mbox{$\mathbb{P}(\bigwedge^{2}W)$} \\
e &\mapsto [\bq_e]=[q_e^1,\dots,q_e^D].
\end{align*}
\end{itemize}
Namely, a line $\bq(e)$ associated with $e=uv$ represents a bar connecting between two bodies associated with $u$ and $v$.
An {\em infinitesimal motion} of $(G,\bq)$ is a mapping $\bmm:V\rightarrow \bigwedge^{d-1} W$ satisfying
\begin{equation}
\label{eq:bar_constraint}
\bq_e\cdot (\bmm(u)-\bmm(v))=0 \qquad \text{for all } e=uv\in E.
\end{equation}
This definition is essentially the same as the conventional one used in the bar-joint model,
in the sense that it  requires the orthogonality of the direction of a bar and the difference of 
infinitesimal motions assigned to the adjacent bodies. 
A detailed geometric meaning of  (\ref{eq:bar_constraint}) is explained in Appendix A. (Detailed description can be also
found in e.g., \cite{tay1991linking,white:whiteley:87,White94,Jackson:07}.)

The set of infinitesimal motions forms a $D|V|$-dimensional vector space.
An infinitesimal motion is called {\em trivial} if $\bmm(v)=\bmm(u)$ for all $u,v\in V$.
It is easy to see that the collection of trivial motions forms a $D$-dimensional vector space.
A body-bar framework is called {\em infinitesimally rigid} if every infinitesimal motion is trivial.

\subsubsection{Body-bar matroids}
The {\em body-bar  matroid} ${\cal B}(G,\bq)$ is defined as a matroid on $E$
whose rank is the maximum size of independent linear equations in (\ref{eq:bar_constraint}) (for unknown $\bmm$).
Namely,  ${\cal B}(G,\bq)$ is a linear matroid on $E$ in which each edge $e=uv$ is represented 
by the following vector in $V_V(=\mathbb{R}^{D|V|})$:
 \begin{equation}
\label{eq:e_vector2}
(\Bvector{}{0}, \Bvector{\cdots}{\cdots}, \Bvector{}{{0}}, 
\Bvector{u}{\overbrace{q_{e}^1, \dots, q_{e}^D}}, \Bvector{}{0}, \Bvector{\cdots}{\cdots}, \Bvector{}{0}, 
\Bvector{v}{\overbrace{-q_{e}^1, \dots, -q_{e}^D}}, \Bvector{}{0}, \Bvector{\cdots}{\cdots}, \Bvector{}{0}).
\end{equation}
Notice that, unlike the union of $D$ copies of the graphic matroid, $[q_e^1,\dots, q_e^D]$ is restricted to $Gr(2,W)$ for each $e\in E$ (compare to (\ref{eq:e_vector})).
The direct product of this restricted space over all edges is called the {\em bar-configuration space} ${\cal C}$.

A bar-configuration $\bq$ is called {\em generic} if the rank of every $F\subseteq E$ in ${\cal B}(G,\bq)$ is maximized among all bar-configurations.
As pointed in \cite{whiteley:88}, it can be seen that almost all bar-configurations $\bq$ are generic as follows. 
Let $B(\bq)$ be the $|E|\times D|V|$-matrix representing ${\cal B}(G,\bq)$.
%, i.e.,
%each row is corresponding to an edge and has form (\ref{eq:e_vector2}).
Note that the rank of ${\cal B}(G,\bq)$ decreases only if a minor of $B(\bq)$ vanishes.
Each minor of $B(\bq)$ defines an algebraic variety $S$ of ${\cal C}$,
which is lower-dimensional than ${\cal C}$ since a polynomial generating $S$ is linear with respect to $q_e^1,\dots, q_e^D$ for each $e\in E$.
Thus, ${\cal C}\setminus S$ is  a dense subset of ${\cal C}$.
Since there are a finite number of minors in $B(\bq)$, the set of points in ${\cal C}$ in which no minor vanishes is also a dense subset of ${\cal C}$.
In other words, almost all bar-configurations are generic.

Notice that, once we assume generic bar-configurations, the rank of ${\cal B}(G,\bq)$ is determined only by $G$.
We hence define the {\em generic body-bar matroid} ${\cal B}(G)$ as ${\cal B}(G,\bq)$ with a (any) generic bar-configuration $\bq$.   
The following result is proved by Tay \cite{tay:84}.
Simpler proofs based on tree-decompositions are given in \cite{white:whiteley:87,whiteley:88}.
We shall provide a proof from our viewpoint.

\begin{theorem}[Tay~\cite{tay:84}]
\label{theorem:body_bar}
Let $G=(V,E)$ be a graph.
Then, ${\cal B}(G)=D{\cal G}(G)$.
\end{theorem}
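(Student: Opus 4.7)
The plan is to realize ${\cal B}(G,\bq)$, for generic $\bq$, as a matroid of the form $(E, \rank_X)$ over the same flat family ${\cal A} = \{A_e : e\in E\}$ introduced in Subsection~\ref{subsec:graphic}. Comparing (\ref{eq:e_vector}) and (\ref{eq:e_vector2}), the row for $e = uv$ in the body-bar matrix is exactly the vector in the affine cone over $A_e$ corresponding to $[\bq_e] \in Gr(2, W)$ under the natural identification $A_e \cong \mathbb{P}(\bigwedge^2 W)$. Thus a bar-configuration is precisely a choice of representative point $x_e \in Z_e$ for each $e$, where $Z_e \subseteq A_e$ denotes the copy of the Grassmannian sub-variety. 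Since Subsection~\ref{subsec:graphic} already identifies $D{\cal G}(G)$ with the generic matroid ${\cal M}({\cal A})$, the theorem reduces to showing that a generic choice of representatives $x_e \in Z_e$ still yields a set $X$ in generic position in the sense of (\ref{eq:point_generic}).

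I would establish two facts about $Z_e$: (i) $Z_e$ is irreducible, inherited from the irreducibility of $Gr(2, W)$; and (ii) $Z_e$ is not contained in any proper sub-flat of $A_e$, because the decomposable 2-vectors $e_i \wedge e_j$ form a basis of $\bigwedge^2 W$ and lie in $Gr(2,W)$, so $Gr(2,W)$ is not contained in any hyperplane of $\mathbb{P}(\bigwedge^2 W)$. Together, (i) and (ii) imply that for any proper sub-flat $L \subsetneq A_e$, the intersection $Z_e \cap L$ is a proper closed sub-variety of $Z_e$. Then for each pair $(F, e)$ with $e \in F \subseteq E$, the locus of $\bq \in \prod_{e'} Z_{e'}$ that violates (\ref{eq:point_generic}) at $(F, e)$ is a proper sub-variety of $\prod_{e'} Z_{e'}$: with the other $x_{e'}$ fixed, either $A_e$ is contained in $L:=\overline{\{x_{e'} : e' \in F \setminus \{e\}\}}$ (so (\ref{eq:point_generic}) is satisfied automatically), or $L\cap A_e$ is a proper sub-flat of $A_e$ that is avoided by a generic $x_e \in Z_e$ by (i) and (ii). Intersecting over the finitely many pairs $(F, e)$ yields a dense open set of $\bq$ for which $X$ is in generic position.

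For such $\bq$, Theorem~\ref{theorem:flat_matroid} gives $\rank({\cal B}(G, \bq)|_F) = \min_{F' \subseteq F}\{|F \setminus F'| + \rank(\overline{{\cal A}_{F'}})\}$, which coincides with the rank function $r_{Dg}(F)$ of $D{\cal G}(G)|_F$ (a block-structure inspection of (\ref{eq:e_vector}) gives $\rank(\overline{{\cal A}_{F'}}) = D(|V(F')| - c(F'))$, matching the value of $\widehat{Dg}(F')$). I expect the main obstacle to be the genericity verification in the middle paragraph: a priori, confining each representative to the quadric $Gr(2,W)$ could conspire with the incidence structure of $G$ to force unexpected collinearities among the $x_e$; what rules this out is precisely the combination of the irreducibility of $Gr(2,W)$ and its non-containment in any hyperplane of $\mathbb{P}(\bigwedge^2 W)$.
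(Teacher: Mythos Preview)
Your proposal is correct and follows essentially the same route as the paper: both identify $D{\cal G}(G)$ with the generic matroid ${\cal M}({\cal A})$ and then argue that representative points constrained to the Grassmannian copies $\hat{A}_e\subset A_e$ can still be chosen in generic position in the sense of (\ref{eq:point_generic}). The only difference is in how that genericity step is verified: the paper passes to an affine chart, rationally parameterizes $Gr(2,W)$ by $2(d-1)$ free coordinates, and takes those parameters algebraically independent over $\mathbb{Q}$, whereas you argue directly from the irreducibility of $Gr(2,W)$ together with the fact that it linearly spans $\mathbb{P}(\bigwedge^2 W)$ (your (i) and (ii)); the two arguments are interchangeable, and yours avoids the explicit coordinate computation.
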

\begin{proof}
From the discussion given in Subsection~\ref{subsec:graphic},
$D{\cal G}(G)$ is equal to the generic matroid ${\cal M}({\cal A})$ 
associated with the flat family ${\cal A}=\{A_e : e\in E\}$ defined by 
\begin{equation}
\label{eq:e_space2}
A_e=\{ 
[\Bvector{}{0}, \Bvector{\cdots}{\cdots}, \Bvector{}{0}, \Bvector{u}{\bal}, \Bvector{}{0}, \Bvector{\cdots}{\cdots} , \Bvector{}{0},
\Bvector{v}{-\bal}, \Bvector{}{0}, \Bvector{\cdots}{\cdots}, \Bvector{}{0}]
 : 
[\bal]\in \mathbb{P}^{D-1} \}\subseteq \mathbb{P}(V_V).
\end{equation}

In order to prove ${\cal B}(G)={\cal M}({\cal A})$,
it is sufficient to show that the representative point $x_e$ of $A_e$ 
(that defines ${\cal M}({\cal A})$) can be taken to be in general position from 
\begin{equation}
\hat{A}_e=\{[\Bvector{}{0}, \Bvector{\cdots}{\cdots}, \Bvector{}{0},\Bvector{u}{\bal}, \Bvector{}{0}, \Bvector{\cdots}{\cdots},\Bvector{}{0},
\Bvector{v}{-\bal}, \Bvector{}{0}, \Bvector{\cdots}{\cdots}, \Bvector{}{0}]  :  [\bal]\in Gr(2,W)\}\subseteq \mathbb{P}(V_V).
\end{equation}
%Note that $\hat{A}_e$ spans $A_e$.
Specifically, we need to show that there exists $X=\{x_e\in \hat{A}_e :  e\in E\}$ such that,
for each $X'\subseteq X$ and $x_e\in X'$,
\[
 x_e \in \overline{X'-x_e} \Rightarrow A_e \subseteq \overline{X'-x_e},
\]
(c.f.~(\ref{eq:point_generic})).
Let us consider the case $d=3$ (and $D=6$). 
%For $e=uv\in E$, consider a bijection $f_{e}:A_e\rightarrow \mathbb{P}(\bigwedge^2 W)$ defined by:
%\[ 
%[\Bvector{}{0}, \Bvector{}{\cdots}, \Bvector{}{0}, \Bvector{u}{p_1,\dots,p_6}, \Bvector{}{0}, \Bvector{}{\cdots}, \Bvector{}{0},
%\Bvector{v}{-p_1,\dots, -p_6}, \Bvector{}{0}, \Bvector{}{\cdots}, \Bvector{}{0}]
%\quad \longmapsto \quad [p_1,\dots, p_6] \in \mathbb{P}^{5}.
%\
For $e=uv\in E$, let us pick a point
\begin{equation*}
x_e=
[\Bvector{}{0}, \Bvector{\cdots}{\cdots}, \Bvector{}{0}, \Bvector{u}{\overbrace{x_e^1,\dots, x_e^6}}, \Bvector{}{0}, \Bvector{\cdots}{\cdots} , \Bvector{}{0},
\Bvector{v}{\overbrace{-x_e^1,\dots, -x_e^6}}, \Bvector{}{0}, \Bvector{\cdots}{\cdots}, \Bvector{}{0}]\in A_e.
\end{equation*}
Then, $x_e\in \hat{A}_e$ if and only if $x_e^1x_e^6-x_e^2x_e^5+x_e^3x_e^4=0$. 
We now focus on a $5$-dimensional affine space $\mathbb{A}$ by setting $x_e^4=1$.
Note that $Gr(2,W)\cap \mathbb{A}$ is a smooth $4$-dimensional manifold parameterized by $x_e^1, x_e^2, x_e^5, x_e^6$
since $x_e^3=-x_e^1x_e^6+x_e^2x_e^5$.

Let us take $x_e$ 
so that the set of parameters $x_e^1, x_e^2, x_e^5, x_e^6$ for all $e\in E$ is algebraically independent over $\mathbb{Q}$.
Suppose, for a contradiction, that  $x_e\in \overline{X'-x_e}$ but $A_e\not\subset \overline{X'-x_e}$ for some $e=uv$.
%Let $L=\overline{X'-x_e}\cap A_e$.
%Then, since $A_e\not\subset \overline{X'-x_e}$ but $x_e\in \overline{X'-x_e}$, 
%$L$ is a nonempty linear subvariety of $A_e$.
%
Let us consider a hyperplane $H$ of $\mathbb{P}(V_V)$ that contains $\overline{X'-x_e}$ but does not contain $A_e$.
%Let us express such a hyperplane $H$ by $\{[p^0,\dots, p^{6}]\in \mathbb{P}^{5}: \sum_{i=0}^{D-1} a^ip^i=0\}$.
We can take such a hyperplane $H$ so that each coefficient is written as a polynomial of 
$\{x_{e'}^1, x_{e'}^2, x_{e'}^5, x_{e'}^6: e'\in E-e\}$ over $\mathbb{Q}$.
Moreover, $H\cap \hat{A}_e$ is a lower-dimensional subspace of $\hat{A}_e$ since $Gr(2,W)$ is quadratic and irreducible.
In particular, $H$ does not contain $\hat{A}_e$.
%This means that $\sum_{i=0}^{D-1} a^ip^i=0$ is not identically zero over $[p^1,\dots, p^D]\in Gr(2,W)$.
Therefore, if $x_e\in H$, then $\{x_e^1,x_e^2,x_e^5,x_e^6 :  e\in E\}$ satisfies a nontrivial algebraic relation over $\mathbb{Q}$, contradicting the choice of $x_e$.

The general $d$-dimensional case follows in the same way based on  the following fact.
If $Gr(2,W)$ is restricted to a $(D-1)$-dimensional affine space $\mathbb{A}$ by fixing one coordinate,
then $Gr(2,W)\cap \mathbb{A}$ is known to be a smooth $2(d-1)$-dimensional manifold (see, e.g.,\cite{Hassett200705}).
Moreover, each coordinate of a point in $Gr(2,W)\cap \mathbb{A}$ is written as 
a rational function of $2(d-1)$ parameters with coefficients in $\mathbb{Q}$.
Thus, we can apply the exactly same argument.   
\end{proof}

\section{Body-rod-bar Frameworks}
\label{sec:rod_bar}
We now provide our main result on the generic rigidity of body-rod-bar frameworks.
We first introduce a counting matroid defined on graphs in Subsection~\ref{sec:rod_combinatorial},
and then in Subsection~\ref{sec:rod_algebraic} 
we show that generic rigidity of body-rod-bar frameworks can be characterized by the combinatorial matroid.

\subsection{Combinatorial truncated matroids}
\label{sec:rod_combinatorial}
\subsubsection{Count matroids}
Let $G=(V,E)$ be a graph with an (ordered) partition ${\cal P}=\{B,R\}$ of $V$ into two subsets 
(where $B$ and $R$ will represent a set of bodies and a set of rods, respectively, in the next subsection).
We define an integer-valued function $f$ on $E$ defined by
\begin{equation}
\label{eq:truncated_function}
f(F)=D(|V(F)|-1)-|R(F)| \qquad (F\subseteq E),
\end{equation}
where $R(F)$ denotes the set of vertices in $R$ spanned by $F$, and $D={d+1\choose 2}$ as in Section~\ref{sec:body_bar}. 
Then, $f$ is a monotone submodular function on $E$, 
since $f(F)=D|B(F)|+(D-1)|R(F)|-D$ and 
$|B(\cdot)|$ and $|R(\cdot)|$ are both monotone and submodular. 
Thus, $f$ induces the matroid $(E,r_f)$ on $E$, denoted ${\cal M}_f(G,{\cal P})$.
%(Namely, $F\subseteq E$ is independent if and only if $|F'|\leq f(F')$ for any nonempty $F'\subseteq F$.
If the bipartition ${\cal P}$ is clear from the context, we abbreviate it and simply denote ${\cal M}_f(G)$.
This matroid is a special case of so-called {\em count matroids} on undirected graphs, 
see e.g., \cite[Section 13.5]{Frank2011} for more detail.

We denote by $f\circ G$ the graph obtained from $G$ by replacing each edge $e$ by $f(e)$ parallel copies of $e$ (see Figure~\ref{fig:fG}).
Also, $f\circ e$ denotes the {\em set} of corresponding copies of $e$, and let $f\circ F=\bigcup_{e\in F}f\circ e$. 
We can naturally extend $f$ to that on $f\circ E$ by setting $f(F)=D|V(F)|-D-|R(F)|$ for $F\subseteq f\circ E$.

\begin{figure}[t]
\centering
\begin{minipage}{0.3\textwidth}
\centering
\includegraphics[scale=1]{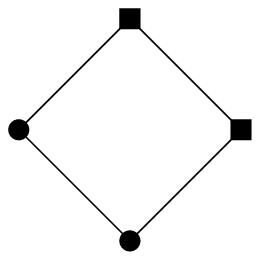}
\par
$G$
\end{minipage}
\begin{minipage}{0.3\textwidth}
\centering
\includegraphics[scale=1]{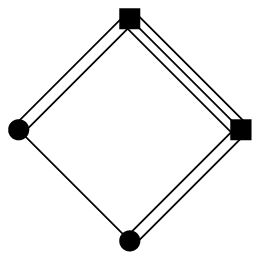}
\par
$f\circ G$
\end{minipage}
\caption{Example of $f\circ G$ for $D=3$, where circles and squares represent vertices of $R$ and $B$, respectively.}
\label{fig:fG}
\end{figure}

Let us consider $\hat{f}:2^E\rightarrow \mathbb{Z}$ defined by (\ref{eq:g_hat}), i.e., for $F\subseteq E$,
\begin{equation}
\label{eq:f_hat}
\hat{f}(F)=\min\{\mbox{$\sum_{i=1}^k$}(D(|V(F_i)|-1)-|R(F_i)|) : \text{a partition $\{F_1,\dots, F_k\}$ of $F$} \}. 
\end{equation}
As mentioned in Subsection~\ref{subsec:polymatroids}, $\hat{f}$ is a monotone submodular function satisfying $f(\emptyset)=0$,
and thus $(E,\hat{f})$ forms a polymatroid, denoted by ${\cal PM}_f(G,{\cal P})$ (or simply by ${\cal PM}_f(G)$).
The following lemma implies that ${\cal PM}_f(G)$ is essentially the same as ${\cal M}_f(f\circ G)$.
\begin{lemma}
\label{lemma:circ}
For any $F\subseteq E$, $\hat{f}(F)=r_f(f\circ F)$.
Namely, the rank of $F\subseteq E$ in ${\cal PM}_f(G)$ is equal to the rank of $f\circ F$ in ${\cal M}_f(f\circ G)$.
\end{lemma}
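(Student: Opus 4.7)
My plan is to compare the two explicit min formulas directly. Equation (\ref{eq:g_matroid}) gives
\[r_f(f \circ F) = \min\!\left\{|X_0| + \sum_i f(H_i)\right\}\]
over all partitions $\{X_0, H_1, \ldots, H_k\}$ of $f \circ F$, while (\ref{eq:f_hat}) defines $\hat f(F) = \min \sum_i f(F_i)$ over all partitions of $F$, and I want to show the two minima coincide. The structural fact linking them is: for any $H \subseteq f \circ E$, writing $E(H) := \{e \in E : (f \circ e) \cap H \neq \emptyset\}$ for the set of ``parent'' edges of $H$, we have $V(H) = V(E(H))$ and $R(H) = R(E(H))$, so $f(H) = f(E(H))$.

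Using this identity, the inequality $r_f(f \circ F) \leq \hat f(F)$ is immediate: taking an optimal partition $\{F_1,\ldots,F_k\}$ for $\hat f(F)$ and lifting it to $\{f \circ F_1,\ldots,f \circ F_k\}$ with $X_0 = \emptyset$ yields a partition of $f \circ F$ of value $\sum_i f(f \circ F_i) = \sum_i f(F_i) = \hat f(F)$. For the reverse inequality, I would start with an optimum partition $\{X_0, H_1, \ldots, H_k\}$ achieving $r_f(f \circ F)$ and perform two local reductions that keep the objective $|X_0| + \sum_i f(H_i)$ non-increasing. \emph{(i)} If two parts $H_i, H_j$ share some original edge (i.e.\ $E(H_i) \cap E(H_j) \neq \emptyset$), merge them; by submodularity of $f$ on $E$ together with $f(\text{nonempty}) \geq 0$,
\[f(H_i \cup H_j) = f(E(H_i) \cup E(H_j)) \leq f(E(H_i)) + f(E(H_j)) - f(E(H_i) \cap E(H_j)) \leq f(H_i) + f(H_j).\]
\emph{(ii)} If some $e \in F$ has copies both in $X_0$ and in some $H_i$, move its $X_0$-copies into $H_i$; this leaves $E(H_i)$ (and hence $f(H_i)$) unchanged while strictly reducing $|X_0|$. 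After these reductions, every $e \in F$ has all of its $f(e)$ copies either entirely in $X_0$ or entirely in a unique $H_i$. Setting $E_0 := \{e : f \circ e \subseteq X_0\}$ and $E_i := E(H_i)$ then gives a partition of $F$ with $|X_0| + \sum_i f(H_i) = \sum_{e \in E_0} f(e) + \sum_{i \geq 1} f(E_i)$; refining $E_0$ into singletons exhibits this sum as $\sum_j f(F_j)$ for some partition of $F$, which is at least $\hat f(F)$.

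The main obstacle is the merging step \emph{(i)}: one has to check carefully that submodularity of $f$ combined with $f(\text{nonempty}) \geq 0$ is precisely what is needed for the merge to be non-increasing. Since $f(\emptyset) = -D < 0$, merging two parts with \emph{disjoint} parent-edge sets would actually increase the objective by $D$, so the reduction is available only when $E(H_i) \cap E(H_j) \neq \emptyset$; luckily this is exactly the case one needs in order to turn a generic partition of $f \circ F$ into one compatible with a partition of $F$. Everything else reduces to bookkeeping about how the copies of each edge distribute across the parts, plus a minimal-counterexample argument to guarantee that the two reductions terminate.
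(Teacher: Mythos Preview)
Your argument is correct and follows essentially the same route as the paper: both proofs show that an optimal partition $\{F_0^*,F_1^*,\dots,F_k^*\}$ of $f\circ F$ in the formula~(\ref{eq:matroid_rank}) may be assumed to have $F_0^*=\emptyset$ and each $F_i^*=f\circ F_i'$ for some $F_i'\subseteq F$, whence the value coincides with a term in the definition of $\hat f(F)$. The paper asserts these two normalisations in one line each (using $|f\circ e|=f(e)$ and $f(f\circ F')=f(F')$), whereas you spell out the underlying local moves---merging parts that share a parent edge via submodularity plus $f(\text{nonempty})\geq 0$, and shifting copies out of $X_0$---which is exactly the justification the paper leaves implicit.
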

\begin{proof}
Recall that, for any $F\subseteq E$, $r_f(f\circ F)$ is written as
$r_f(f\circ F)=\min\{|F_0|+\sum_{i=1}^kf(F_i)\}$, where the minimum is taken over partitions $\{F_0,F_1,\dots, F_k\}$ of $f\circ F$
such that $F_1,\dots, F_k\neq \emptyset$ (see (\ref{eq:matroid_rank})).
Let $\{F_0^*,F_1^*,\dots, F_k^*\}$ be a partition of $f\circ F$ that attains that minimum.
Since $|f\circ e|=f(e)$ for every $e\in E$, we may assume $F_0^*=\emptyset$.
Also, since $f(f\circ F)=f(F)$ for any $F\subseteq E$, we may assume that
each $F_i^*(\subseteq f\circ F)$ is written as $F_i^*=f\circ F_i'$ for some $F_i'\subseteq F$.
Thus, $r_f(f\circ F)$ is actually written as $r_f(f\circ F)=\min\{\sum_{i=1}^kf(f\circ F_i')\}=\min\{\sum_{i=1}^kf(F_i')\}$,
where the minimum is taken over all partitions $\{F_1',\dots, F_k'\}$ of $F$.
This is exactly the definition of $\hat{f}(F)$. 
\end{proof}
A reduction technique of general polymatroids to matroids can be found in, e.g.,\cite[Section~44.6b]{Schriver}.

\subsubsection{Properties of ${\cal M}_f$}
We now show several properties of ${\cal M}_f(G,{\cal P})$ for a graph $G=(V,E)$ with a bipartition ${\cal P}=\{B,R\}$ of $V$.
(These lemmas are generally known for count matroids. We provide proofs for the completeness.)
\begin{lemma}
\label{lemma:f_circuit}
Let $C$ be a circuit of ${\cal M}_f(G)$.
Then, $r_f(C)=f(C)$.
\end{lemma}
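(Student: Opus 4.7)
The plan is to derive the equality directly from the rank formula (\ref{eq:matroid_rank}) together with the minimality of $C$ as a dependent set. Throughout, I use that in the induced matroid ${\cal M}_f(G)$, a set $F\subseteq E$ is independent precisely when $|F'|\leq f(F')$ for every nonempty $F'\subseteq F$.

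First, I would establish the easy inequality $r_f(C)\leq f(C)$ by applying (\ref{eq:matroid_rank}) to the trivial partition $\{F_0,F_1\}=\{\emptyset, C\}$. For the reverse inequality, I would start by noting two consequences of $C$ being a circuit: (i) every proper subset $F\subsetneq C$ is independent, so $|F|\leq f(F)$; and (ii) $C$ itself is dependent, and since the only nonempty subset of $C$ whose independence is not forced by (i) is $C$ itself, this dependence must come from $|C|>f(C)$. In particular, $r_f(C)\leq f(C)\leq |C|-1$.

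Now let $\{F_0,F_1,\dots,F_k\}$ be any partition of $C$ (with $F_1,\dots,F_k$ nonempty) attaining the minimum in the formula $r_f(C)=|F_0|+\sum_{i=1}^k f(F_i)$. I would split into two cases. If the partition is the trivial one ($F_0=\emptyset$ and $k=1$, so $F_1=C$), then $r_f(C)=f(C)$ and we are done. Otherwise — that is, if $F_0\neq\emptyset$ or $k\geq 2$ — then every $F_i$ with $i\geq 1$ is a proper subset of $C$, hence independent in ${\cal M}_f(G)$ by minimality of $C$, giving $|F_i|\leq f(F_i)$. Summing yields
\[
r_f(C)=|F_0|+\sum_{i=1}^k f(F_i)\;\geq\; |F_0|+\sum_{i=1}^k |F_i|=|C|,
\]
contradicting $r_f(C)\leq |C|-1$. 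Hence the optimal partition must be the trivial one, and $r_f(C)=f(C)$.

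This argument is purely matroid-theoretic and does not use the specific form of $f$ beyond its being the inducing submodular function; the only real work is the case analysis on the optimal partition, and there is no serious obstacle. The crux is simply recognizing that a nontrivial partition of a circuit splits $C$ into proper, hence independent, pieces, which forces the rank formula to return a value of at least $|C|$ — too large to be the rank of a dependent set.
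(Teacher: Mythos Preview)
Your proof is correct and rests on the same core observation as the paper's---that every proper subset of a circuit is independent, forcing $|C|>f(C)$ and $|C-e|\leq f(C-e)\leq f(C)$. The paper is slightly more direct: it combines these to get $|C|=f(C)+1$ and then invokes the standard matroid fact that the rank of a circuit is $|C|-1$, whereas you rederive this last step by analyzing the rank formula~(\ref{eq:matroid_rank}) and ruling out nontrivial partitions; the difference is cosmetic.
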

\begin{proof}
Since $C$ is a minimal dependent set, $|C|>f(C)$ and $|C|-1=|C-e|\leq f(C-e)\leq f(C)$ for any $e\in C$.
This implies $|C|=f(C)+1$.
Thus, $r_f(C)=|C|-1=f(C)$.
\end{proof}
\begin{lemma}
\label{lemma:f_connected}
Let $F\subseteq E$ be a nontrivial $M$-connected set in ${\cal M}_f(G)$.
Then, $r_f(F)=f(F)$. 
\end{lemma}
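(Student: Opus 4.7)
The goal is to show $r_f(F) = f(F)$ whenever $F$ is nontrivial and $M$-connected. One direction, $r_f(F) \leq f(F)$, follows directly from the rank formula \eqref{eq:matroid_rank} by choosing $F_0 = \emptyset$, $k=1$, $F_1 = F$. The plan is to argue that any partition attaining the minimum in \eqref{eq:matroid_rank} must essentially be this trivial one, with $M$-connectivity doing the heavy lifting.

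Let $\{F_0, F_1, \ldots, F_k\}$ be an optimal partition (with $F_1,\ldots,F_k$ nonempty), so that $r_f(F) = |F_0| + \sum_{i=1}^k f(F_i)$. The centerpiece of the proof is the sandwich
\[
r_f(F) \;\leq\; r_f(F_0) + \sum_{i=1}^k r_f(F_i) \;\leq\; |F_0| + \sum_{i=1}^k f(F_i) \;=\; r_f(F),
\]
whose first step is subadditivity of the matroid rank across a partition, and whose second uses the universal inequality $r_f \leq f$ applied to each $F_i$ (a subset-wise consequence of the trivial-partition bound) together with $r_f(F_0) \leq |F_0|$. Since the two outer expressions coincide, every inequality is an equality. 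In particular, $r_f(F) = r_f(F_0) + \sum_{i=1}^k r_f(F_i)$; that is, the partition decomposes $F$ additively with respect to $r_f$.

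The definition of $M$-connectivity from Subsection~\ref{sec:connectivity} forbids precisely such a rank-additive decomposition whenever the partition has at least two nonempty parts. Counting nonempty parts among $\{F_0, F_1, \ldots, F_k\}$, only two possibilities remain: either $F_0 = \emptyset$ and $k = 1$, giving $r_f(F) = f(F_1) = f(F)$ as desired, or $F_0 = F$ and $k = 0$, which would force $r_f(F) = |F|$ and so declare $F$ independent. The latter case is excluded by nontriviality of $F$: any two distinct elements of $F$ lie in a common circuit by $M$-connectivity, so $F$ contains a circuit and is dependent. The main point to watch out for is the bookkeeping around $F_0$: because its contribution to the formula is $|F_0|$ rather than $f(F_0)$, it must be counted as an extra nonempty part when invoking the $M$-connectivity definition, and the degenerate case $k=0$ (in which the partition is just $\{F_0=F\}$, so the definition says nothing) must be ruled out separately via the circuit argument. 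Beyond this, the proof uses nothing deeper than \eqref{eq:matroid_rank}, subadditivity of the matroid rank, and the definition of $M$-connectivity.
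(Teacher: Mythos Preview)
Your proof is correct and takes a genuinely different route from the paper's. The paper argues by contradiction: assuming $r_f(F)<f(F)$, it finds an edge $uv$ with $u,v\in V(F)$ such that $r_f(F+uv)=r_f(F)+1$, then uses Lemma~\ref{lemma:f_circuit} together with the graph-specific identity $f(C+uv)=f(C)$ to show that $uv$ lies in the closure of a circuit $C\subseteq F$, a contradiction. This relies on the particular form of $f$ (that it depends only on $V(F)$) and on the circuit lemma just proved.

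Your argument instead works purely at the level of the rank formula~\eqref{eq:matroid_rank}: you take an optimal partition, sandwich it via subadditivity of $r_f$ to force rank-additivity across the parts, and then invoke the partition characterisation of $M$-connectivity from Subsection~\ref{sec:connectivity} to collapse the partition to a single block. The payoff is generality---your argument uses nothing about $f$ beyond nonnegativity on nonempty sets, so it shows $r_f(F)=f(F)$ for nontrivial $M$-connected sets in \emph{any} matroid induced by a nonnegative monotone submodular function, and it does not depend on Lemma~\ref{lemma:f_circuit}. The paper's approach, by contrast, makes the count-matroid structure visible (the key step $f(C+uv)=f(C)$ is exactly what drives Lemma~\ref{lemma:M_closure} next), so it fits more organically into the surrounding narrative even though it is less portable.
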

\begin{proof}
Suppose $r_f(F)<f(F)$. 
Then, there are $u,v\in V(F)$ with $uv\notin F$ such that $r_f(F+uv)=r_f(F)+1$.
Let us take two distinct edges $e$ and $e'$ of $F$ incident to $u$ and $v$, respectively.
(It is easy to see that such two edges exist since $F$ is $M$-connected.)
Since $F$ is $M$-connected, there is a circuit $C\subseteq F$ that contains $e$ and $e'$.
Then, by Lemma~\ref{lemma:f_circuit} and by $f(C+uv)=f(C)$,  
we obtain $r_f(C+uv)\leq f(C+uv)=f(C)=r_f(C)$, implying $r_f(C+uv)=r_f(C)$.
In other words, $uv$ is contained in the closure of $C$. This contradicts $r_f(F+uv)=r_f(F)+1$.
\end{proof}
\begin{lemma}
\label{lemma:M_closure}
Let $F\subseteq E$ be a nontrivial $M$-connected set in ${\cal M}_f(G)$.
Then, the closure  of $F$, that is, $\{e\in E(G)\colon r_f(F+e)=r_f(F)\}$, 
is the set of edges induced by $V(F)$.
In particular, if $F$ is an $M$-connected component, then $(V(F),F)$ is an induced subgraph.
\end{lemma}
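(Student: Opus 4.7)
My plan is to characterize the closure of $F$ directly from the counting identity $r_f(F) = f(F)$ established in Lemma~\ref{lemma:f_connected}, and then use maximality of $M$-connected components to upgrade the closure statement to the ``induced subgraph'' claim. The easy inclusion is that every edge $e = uv$ with $u, v \in V(F)$ lies in the closure: such an $e$ satisfies $V(F+e) = V(F)$ and $R(F+e) = R(F)$, so $f(F+e) = f(F) = r_f(F)$, and monotonicity of $r_f$ forces $r_f(F+e) = r_f(F)$.

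The main step is the reverse inclusion: if $e = uv$ has, say, $u \notin V(F)$, then $r_f(F+e) \geq r_f(F) + 1$. I would fix a maximum independent subset $I$ of $F$, which by Lemma~\ref{lemma:f_connected} satisfies $|I| = f(F)$, and show that $I + e$ remains independent in ${\cal M}_f(G)$. The only nontrivial check is $|I'' + e| \leq f(I'' + e)$ for subsets $I'' \subseteq I$, and the key computation is
\[
f(I'' + e) - f(I'') = D\bigl(|V(I''+e)| - |V(I'')|\bigr) - \bigl(|R(I''+e)| - |R(I'')|\bigr) \geq D - 1,
\]
which holds because $u \notin V(F) \supseteq V(I'')$ contributes at least one new vertex while the number of new rod vertices is bounded by the number of new vertices. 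Combining this with $|I''| \leq f(I'')$ (or $f(\{e\}) \geq 1$ when $I'' = \emptyset$) and the assumption $D \geq 3$ yields the desired inequality, so $I + e$ is independent and $r_f(F+e) \geq |I| + 1$.

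For the $M$-connected component statement, suppose $F$ is a maximal $M$-connected set and take any edge $e$ with $V(e) \subseteq V(F)$. By the closure characterization just established, $r_f(F+e) = r_f(F)$, so if $e \notin F$ then $F + e$ contains a circuit $C$ through $e$; since $D \geq 3$ rules out matroid loops, $|C| \geq 2$ and $C \cap F \neq \emptyset$. As every circuit is $M$-connected, $F \cup C = F + e$ would then be $M$-connected (union of two $M$-connected sets with nonempty intersection), contradicting the maximality of $F$. Hence every edge spanned by $V(F)$ already lies in $F$, making $(V(F), F)$ an induced subgraph. The central obstacle is the computation in the second paragraph: one must quantify how the submodular function $f$ grows when a vertex outside the current span is absorbed. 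Working with a maximum independent subset, rather than the partition formula (\ref{eq:matroid_rank}) directly, keeps the bookkeeping uniform because only the inequality $D - 1 \geq 1$ is needed, regardless of where $e$ would sit in an optimal partition.
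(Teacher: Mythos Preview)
Your proof is correct and, for the inclusion that every edge induced by $V(F)$ lies in the closure, it is exactly the paper's argument (combine $f(F+e)=f(F)$ with $r_f(F)=f(F)$ from Lemma~\ref{lemma:f_connected}). The paper's one-line proof actually stops there; your additional work---showing that an edge with an endpoint outside $V(F)$ strictly raises the rank via the estimate $f(I''+e)-f(I'')\ge D-1$, and spelling out the circuit/maximality argument for the induced-subgraph claim---fills in what the paper leaves implicit rather than taking a different route.
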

\begin{proof}
Since $f(F+e)=f(F)$ holds for any edge $e$ induced by $V(F)$, 
the claim follows from Lemma~\ref{lemma:f_connected}.
\end{proof}
\subsubsection{Properties of ${\cal PM}_f$}
Let us consider ${\cal M}_f(f\circ G)$ for a graph $G=(V,E)$ with a bipartition ${\cal P}$.
By Lemma~\ref{lemma:M_closure}, an $M$-connected component $C$ of ${\cal M}_f(f\circ G)$ is 
either trivial or of the form $C=f\circ F$ for some $F\subseteq E$ with $|F|\geq 2$.
The $M$-connected component decomposition of ${\cal M}_f(f\circ G)$ 
thus induces a unique partition $\{C_1,\dots, C_k\}$ of $E$
such that $C_i$ is singleton or $f\circ C_i$ is an $M$-connected component in ${\cal M}_f(f\circ G)$.
The following lemma says that this partition coincides with the $P$-connected component decomposition of ${\cal PM}_f(G)$. 

\begin{lemma}
\label{lemma:minimizer0}
For a graph $G=(V,E)$ with a bipartition ${\cal P}=\{B,R\}$ of $V$, the following holds:
\begin{description}
\item[(i)] 
Any nontrivial $M$-connected component $X$ of ${\cal M}_f(f\circ G)$ 
can be written as $X=f\circ F$ for some nontrivial $P$-connected component $F\subseteq E$.  
\item[(ii)] If $F\subseteq E$ is a nontrivial $P$-connected set in ${\cal PM}_f(G)$,
then  $f\circ F$ is $M$-connected in ${\cal M}_f(G)$.
\item[(iii)] The $P$-connected component decomposition $\{C_1,\dots, C_k\}$ of ${\cal PM}_f(G)$ is a minimizer of the right hand side of (\ref{eq:f_hat}).
\end{description}
\end{lemma}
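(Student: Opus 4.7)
I will prove (iii), then (ii), then (i), exploiting the common principle that any nontrivial $P$-connected $F$ satisfies $\hat{f}(F)=f(F)$: if the minimum in (\ref{eq:f_hat}) were attained by a nontrivial partition $\{F_i\}$, then $\sum_i \hat{f}(F_i)\leq \sum_i f(F_i)=\hat{f}(F)\leq \sum_i \hat{f}(F_i)$ (the last inequality by submodularity of $\hat{f}$), yielding a nontrivial $\hat{f}$-additive partition and contradicting $P$-connectedness. Part (iii) is then immediate: $\sum_i \hat{f}(C_i)=\hat{f}(E)$ by the definition of $P$-components, and $\hat{f}(C_i)=f(C_i)$ holds for each $C_i$ (for trivial $C_i$ directly, for nontrivial $C_i$ by the principle), so $\sum_i f(C_i)=\hat{f}(E)$ and $\{C_i\}$ attains the minimum in (\ref{eq:f_hat}).

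For (ii), let $F$ be nontrivial $P$-connected, so that $r_f(f\circ F)=\hat{f}(F)=f(F)$ by Lemma~\ref{lemma:circ} and the principle. Arguing by contradiction, suppose $f\circ F$ decomposes into $M$-components $\{D_1,\dots,D_m\}$ of ${\cal M}_f(f\circ G)|_{f\circ F}$ with $m\geq 2$. Viewing the restricted matroid as ${\cal M}_f(H)$ for $H=(V(F),f\circ F)$ and applying Lemma~\ref{lemma:M_closure}, every nontrivial $D_j$ equals the edges of $H$ induced by $V(D_j)$ and therefore contains \emph{all} parallel copies of every $G$-edge whose endpoints lie in $V(D_j)$. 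This produces the dichotomy: for each $e\in F$, either (a) $f\circ e$ is contained in a single nontrivial $D_j$, or (b) every copy of $e$ is its own trivial $M$-component.

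If every edge is of type (a), defining $F_j=\{e\in F:f\circ e\subseteq D_j\}$ yields a nontrivial partition of $F$ with
\[
\mbox{$\sum_j f(F_j)=\sum_j f(D_j)=\sum_j r_f(D_j)=r_f(f\circ F)=\hat{f}(F)$}
\]
(the second equality by Lemma~\ref{lemma:f_connected}), giving a nontrivial $\hat{f}$-additive partition of $F$, a contradiction. If some edge $e$ is of type (b), the $f(e)$ copies of $e$ are coloops of the restricted matroid (they lie in no circuit), so $r_f(f\circ F)=r_f(f\circ(F\setminus\{e\}))+f(e)$; Lemma~\ref{lemma:circ} then makes $\{F\setminus\{e\},\{e\}\}$ an $\hat{f}$-additive partition of $F$, again a contradiction.

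Part (i) follows. Given a nontrivial $M$-component $X$ of ${\cal M}_f(f\circ G)$, Lemma~\ref{lemma:M_closure} identifies $X$ with the edges of $f\circ G$ induced by $V(X)$, so $X=f\circ F$ with $F=\{e\in E:V(e)\subseteq V(X)\}$; moreover $|F|\geq 2$, for if $F=\{e\}$ then $X=f\circ\{e\}$ is independent and hence cannot be nontrivially $M$-connected. Any nontrivial $\hat{f}$-additive partition of $F$ would, via Lemma~\ref{lemma:circ}, produce a nontrivial $r_f$-additive partition of $X$, contradicting its $M$-connectedness; thus $F$ is $P$-connected. Letting $C$ be the $P$-component containing $F$, part (ii) gives that $f\circ C$ is $M$-connected and contains $X$, so the maximality of $X$ forces $X=f\circ C$ and hence $F=C$. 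The main obstacle is case (b) of (ii)---the scenario where parallel copies of some edge might scatter as coloops across distinct trivial components---which is dispatched cleanly by the coloop identity together with Lemma~\ref{lemma:circ}.
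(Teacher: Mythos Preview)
Your argument is correct and follows the same route as the paper, which simply asserts that (i) and (ii) are ``direct consequences of Lemma~\ref{lemma:circ}'' and then gives the one-line chain $\hat{f}(E)=\sum_i \hat{f}(C_i)=\sum_i r_f(f\circ C_i)=\sum_i f(f\circ C_i)=\sum_i f(C_i)$ for (iii); you have carefully filled in the details the paper leaves implicit, including the key principle $\hat{f}(F)=f(F)$ for nontrivial $P$-connected $F$ and the case analysis in (ii).
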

\begin{proof}
(i) and (ii) are direct consequences of Lemma~\ref{lemma:circ}.
%Indeed, as we have seen in the proof of Lemma~\ref{lemma:circ}, $r$
%By Lemma~\ref{lemma:M_closure}, 
%any nontrivial $M$-connected component $X$ 
%can be written as $X=f\circ F$ for some $F\subseteq E$.
%$F$ must be a $P$-connected component by Lemma~\ref{lemma:circ}. Thus we have (i).
%
%If $f\circ F$ is not $M$-connected, then 
%$f\circ F$ can be decomposed into at least two 
%$M$-connected components of ${\cal M}_f(f\circ G)$ restricted to $f\circ F$.
%By (i) and $|e\circ f|=f(e)$, such a decomposition induces a decomposition of $F$ into 
%at least two $P$-connected components. This implies (ii).

For the last claim, 
Lemma~\ref{lemma:circ}, Lemma~\ref{lemma:M_closure} and (ii) imply 
$\hat{f}(E)=\sum_{i=1}^k \hat{f}(C_i)=\sum_{i=1}^k r_f(f\circ C_i)=\sum_{i=1}^kf(f\circ C_i)=\sum_{i=1}^k f(C_i)$.
\end{proof}

For a simple graph $G=(V,E)$, 
it is sometimes useful to introduce the underlying complete simple graph  $K(V)$ on $V$ that contains $G$, 
and extend ${\cal PM}_f(G,{\cal P})$ to ${\cal PM}_f(K(V),{\cal P})$.
We shall denote by ${\rm cl}$ the closure operator of 
${\cal PM}_f(K(V), {\cal P})$, i.e., ${\rm cl}(F)=\{uv\in K(V):\hat{f}(F+uv)=\hat{f}(F)\}$ for $F\subseteq E$.
Then, by Lemma~\ref{lemma:minimizer0}, ${\rm cl}(F)$ forms the complete graph on $V(F)$ if $F$ is $P$-connected. 

The following lemmas are key observations used in the proof of main theorem (Theorem~\ref{theorem:main2}).
\begin{lemma}
\label{lemma:degree}
Let $G=(V,E)$ be a connected simple graph with a bipartition ${\cal P}=\{B,R\}$ of $V$. 
Suppose $D\geq 6$. 
Then $G$ has (i) three vertices each of which is spanned by exactly two $P$-connected components of ${\cal PM}_f(G)$ 
or (ii) a vertex that is spanned by only one $P$-connected component.
\end{lemma}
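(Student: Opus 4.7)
The plan is to argue by contradiction, comparing the lower bound on the total overlap multiplicity $\sum_v d_v$ that comes from the assumption ``neither (i) nor (ii) holds'' with the upper bound forced by the submodular inequality $\hat f(E)\le f(E)$ applied to the $P$-connected component decomposition $\{C_1,\ldots,C_k\}$.

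First I introduce the bookkeeping variables $d_v=|\{i : v\in V(C_i)\}|$ and $a_v=d_v-1$, noting that $\sum_i |V(C_i)|=\sum_v d_v$ and $\sum_i|R(C_i)|=\sum_{v\in R}d_v$. Combining Lemma~\ref{lemma:minimizer0}(iii) (which gives $\hat f(E)=\sum_i f(C_i)$) with $\hat f(E)\le f(E)$ and rearranging, I obtain
\[
 D\sum_{v\in B} a_v + (D-1)\sum_{v\in R} a_v \;\le\; D(k-1),
\]
which in particular forces $\sum_v a_v \le D(k-1)/(D-1)$. Because $G$ is simple, every component satisfies $|V(C_i)|\ge 2$, hence $\sum_v d_v\ge 2k$ and therefore $\sum_v a_v\ge 2k-|V|$; pairing this with the previous upper bound produces the ceiling
\[
 k \;\le\; \frac{(D-1)|V|-D}{D-2}.
\]

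Now suppose neither (i) nor (ii) holds. Failing (ii) forces $d_v\ge 2$, i.e.\ $a_v\ge 1$, for every $v$ (using connectedness of $G$, so no vertex is isolated); failing (i) forces at most two vertices to satisfy $a_v=1$, so $\sum_v a_v\ge 2|V|-2$. Feeding this into $\sum_v a_v\le D(k-1)/(D-1)$ yields the floor
\[
 k \;\ge\; \frac{2(D-1)(|V|-1)}{D}+1.
\]
Combining the ceiling and the floor and clearing denominators reduces the system to $(D-1)\bigl[(D-4)|V|+4\bigr]\le 0$, which is false whenever $D\ge 5$ and $|V|\ge 1$; in particular the hypothesis $D\ge 6$ gives a contradiction for $|V|\ge 3$. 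The remaining case $|V|\le 2$ is handled directly: a simple connected graph on at most two vertices has at most one edge, which is its unique $P$-connected component, whence $d_v=1$ for each endpoint and (ii) holds.

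The main obstacle is setting up the right global count---translating the submodular inequality $\hat f(E)\le f(E)$ into a single linear inequality on the overlap multiplicities $a_v$, and then pairing it with the graph-theoretic bound $\sum_v d_v\ge 2k$ from simplicity. Once these are in place, the additional $2|V|-2$ lower bound provided by the failure of (i) and (ii) closes the gap. Perhaps surprisingly, the clean calculation does not require exploiting the $B/R$ split beyond the coarse estimate $\sum_{v\in R}a_v\le \sum_v a_v$.
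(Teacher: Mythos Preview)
Your argument is correct and takes a genuinely different route from the paper's. The paper introduces an auxiliary graph operation it calls \emph{simplification}: each nontrivial $P$-connected component $C_i$ is replaced by a star on $V(C_i)$ centred at a fresh vertex, and a separate claim shows that all new edges become coloops in ${\cal M}_f$ of the resulting graph $G''$. From independence of $f\circ E(G'')$ one gets $(D-2)|E(G'')|\le D|V(G'')|-D$, and then an average-degree computation forces $|V_2|\ge 3$ (or a degree-$1$ vertex); the key observation is that the degree of each original vertex in $G''$ equals the number of $P$-connected components spanning it. Your proof bypasses this construction entirely: you work directly with the overlap multiplicities $d_v$, extract from $\sum_i f(C_i)=\hat f(E)\le f(E)$ the single linear inequality $D\sum_{v\in B}a_v+(D-1)\sum_{v\in R}a_v\le D(k-1)$, and pair it with the two trivial lower bounds $\sum_v d_v\ge 2k$ and (under failure of (i),(ii)) $\sum_v a_v\ge 2|V|-2$. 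This is shorter, avoids the coloop claim, and does not use simplicity beyond the absence of loops. It is also sharper: your inequality $(D-1)[(D-4)|V|+4]\le 0$ is already violated for $D\ge 4$, whereas the paper's average-degree step needs $D\ge 6$ because of the term $\frac{D-6}{D-2}|V(G'')|$. One cosmetic point: the sentence handling $|V|\le 2$ really only covers $|V|=2$; the edgeless case $|V|=1$ is degenerate (there are no components at all) and the lemma is vacuous there in the paper's intended application.
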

\begin{proof} 
Let $\{C_1,\dots, C_k\}$ be the $P$-connected component decomposition of ${\cal PM}_f(G)$.
%If $C_i$ contains edges $e_2, \dots, e_j$ parallel to $e_1\in C_i$, then we remove $e_2,\dots, e_j$ from $G$.
%Since the closure of $f\circ e_1$ in ${\cal M}_f(f\circ G)$ contains all elements of $f\circ e_2,\dots, f\circ e_j$,
%$\{C_1,\dots, C_i\setminus\{e_2,\dots, e_j\}, C_{i+1},\dots, C_k\}$ is the $P$-connected component decomposition of the resulting polymatroid.
%Thus, it is sufficient to show the claim for the resulting polymatroid.
%Therefore, we may assume that $G$ has no parallel edges in the subsequent discussion.
Note then, since $G$ is simple, any nontrivial $P$-connected component $C_i$ satisfies $|V(C_i)|\geq 3$.

For each nontrivial $C_i$, we consider the following graph operation on $G$, called the {\em simplification} of $C_i$;
remove $C_i$, insert a new vertex $v_c$ to $B$,
and connect each vertex of $V(C_i)$ with $v_c$.
Namely, we replace the induced subgraph $(V(C_i),C_i)$ by the star $(V(C_i)\cup\{v_c\}, S)$ with the centered new vertex $v_c$ and the set $S$ of edges between $v_c$ and $V(C_i)$ (see Figure~\ref{fig:simplification}).
\begin{figure}[th]
\centering
\includegraphics[scale=0.8]{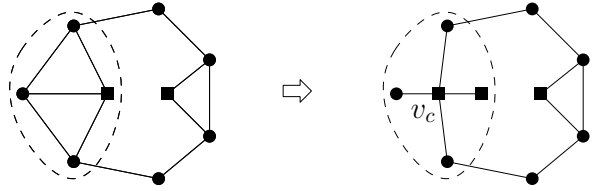}
\caption{Simplification.}
\label{fig:simplification}
\end{figure}

\begin{claim}
\label{claim:deg}
Let $C$ be a nontrivial $P$-connected component of ${\cal PM}_f(G,{\cal P})$.
Let $G'$  be the graph obtained by the simplification of $C$, where we denote $V(G')=V\cup\{v_c\}$ and $E(G')=(E\setminus C)\cup S$,
with the bipartition ${\cal P}'=\{B\cup\{v_c\},R\}$ of $V(G')$.
Then, 
%\begin{itemize}
%\item[(a)] $\widetilde{S}$ is independent in ${\cal M}_f(\widetilde{G'},{\cal P}')$ with $r_f(\widetilde{S})=f(\widetilde{S})$;
each new edge $e\in f\circ S$ is a coloop in ${\cal M}_f(f\circ G',{\cal P}')$.
%\item[(c)] $r_f((\widetilde{E}\setminus \widetilde{C})\setminus\widetilde{S})=r_f(\widetilde{E})-D$.
%\end{itemize}
\end{claim}
\begin{proof}
%(a) This can be easily checked from the definition of $f$ and $\widetilde{S}$.
From the definition of $f$, it is easy to check  that $f\circ S$ is independent in ${\cal M}_f(f\circ G',{\cal P}')$.
Since  $C$ is a $P$-connected component,
we have ${\rm cl}(S)\cap {\rm cl}(C')=\emptyset$ 
for any other $P$-connected component $C'$ of ${\cal PM}_f(G,{\cal P})$.
This implies that there is no circuit of ${\cal M}_f(f\circ G',{\cal P}')$ 
that intersects both $f\circ(E\setminus C)$ and $f\circ S$.
Since $f\circ S$ is independent, there is also no circuit within $f\circ S$ and thus no circuit that contains $e\in f\circ S$ in ${\cal M}_f(f\circ G',{\cal P}')$.
\end{proof}

Claim~\ref{claim:deg} implies that, if we apply the simplification of the $P$-connected component $C_i$,
then no new nontrivial $P$-connected component appears, and $C_1,\dots, C_{i-1}, C_{i+1},\dots, C_k$ are all nontrivial $P$-connected components in the resulting polymatroid.  
Hence, we may apply the simplifications for all $C_1,\dots, C_k$ simultaneously.
Let $G''$ be the resulting graph with the corresponding bipartition ${\cal P}''$ of $V(G'')$ after the simplifications. 
Notice that the degree of each vertex $v\in V(G)$ in $G''$ corresponds to the number of $P$-connected components among $C_1,\dots, C_k$ that span $v$ in $G$. 
We also remark that each vertex of $V(G'')\setminus V(G)$ has degree at least three since $|V(C_i)|\geq 3$.
Thus, to complete the proof, it is sufficient to show that 
$G''$ has at least three vertices of degree $2$ or a vertex of degree $1$.
To see this, observe that $f\circ E(G'')$ is independent in ${\cal M}_f(f\circ G'',{\cal P}'')$ by Claim~\ref{claim:deg}.
So, we have $|f\circ E(G'')|=r_f(f\circ E(G''))$. 
This implies 
$(D-2)|E(G'')|\leq \sum_{e\in E(G'')}f(e)=|f\circ E(G'')|=r_f(f\circ E(G''))\leq D|V(G'')|-D$.
Let $d_{\rm avg}$ be the average degree of $G''$.
Then, we have
\begin{equation*}
\label{eq:deg1} 
\mbox{$d_{\rm avg}=\frac{2|E(G'')|}{|V(G'')|}\leq \frac{2D}{D-2}\left(1-\frac{1}{|V(G'')|}\right)$}.
\end{equation*}
Suppose there is no vertex of degree $1$.
Denoting the set of vertices of degree $2$ in $G''$ by $V_2$,
we have 
\begin{equation*}
\label{eq:deg2}
\mbox{$d_{\rm avg}\geq 3-\frac{|V_2|}{|V(G'')|}$}.
\end{equation*}
Putting them together, we obtain 
\begin{equation*}
\mbox{$|V_2|\geq \frac{2D}{D-2}+\frac{D-6}{D-2}|V(G'')|\geq \frac{2D}{D-2}+\frac{D-6}{D-2}=3$}.
\end{equation*}
(where we used $D\geq 6$ and $|V(G'')|\geq 1$).
This completes the proof.
\end{proof}

{\it Remark.} Lemma~\ref{lemma:degree} does not hold for $d=2$ and $D=3$.
For example, in the cube graph, all $P$-connected components are trivial 
and hence each vertex is spanned by three $P$-connected components since each vertex has degree $3$.   
\qed

\begin{lemma}
\label{lemma:stronger}
Let $G=(V,E)$ be a  simple graph for which $E$ is $P$-connected in ${\cal PM}_f(G,{\cal P})$.
Suppose further that  there are two disjoint nonempty $P$-connected sets $C_1$ and $C_2$ both of which span a vertex $u\in V$.
Then, $G$ contains a $P$-connected set $C$ 
such that $C_1\subseteq C \subseteq E\setminus C_2$ and  
$uv\in {\rm cl}(C)\cap {\rm cl}(E\setminus C)$ for some $uv\in K(V)$. 
\end{lemma}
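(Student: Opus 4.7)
My plan is to take $C$ to be a maximal $P$-connected subset of $E$ satisfying $C_1\subseteq C$ and $C\cap C_2=\emptyset$. Such a $C$ exists because $C_1$ itself is a candidate, and it is unique as a maximum because the union of two $P$-connected sets sharing a common element is itself $P$-connected (\S\ref{sec:connectivity}), so the family of admissible sets is closed under unions. Since $C$ is $P$-connected, Lemma~\ref{lemma:minimizer0}(ii) together with Lemma~\ref{lemma:M_closure}, applied inside ${\cal M}_f(f\circ K(V),{\cal P})$ and transferred back through Lemma~\ref{lemma:circ}, shows that the closure ${\rm cl}(C)$ in ${\cal PM}_f(K(V),{\cal P})$ contains every edge of $K(V(C))$.

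Next I exploit the maximality of $C$ to deduce the following structural constraint: every edge $e\in E\setminus C$ both of whose endpoints lie in $V(C)$ must belong to $C_2$. Indeed, for such an $e$ outside $C_2$ one has $V(C+e)=V(C)$ and $R(C+e)=R(C)$, whence $\hat{f}(C+e)=f(C)$; the candidate partition $\{C,\{e\}\}$ overshoots this value by $f(\{e\})=D-|R(\{e\})|>0$, and every other nontrivial partition $\{F_1,\dots,F_k\}$ of $C+e$ (with $F_1\ni e$, say) satisfies $\sum_j\hat{f}(F_j)\ge\hat{f}(F_1\setminus\{e\})+\sum_{j\ge 2}\hat{f}(F_j)>f(C)$ by the $P$-connectedness of $C$ and monotonicity of $\hat{f}$. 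Thus $C+e$ would be $P$-connected and would properly contain $C$ while still avoiding $C_2$, contradicting maximality.

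Finally, I apply the $P$-connectedness of $E$ to the nontrivial partition $\{C,E\setminus C\}$ to get the strict inequality $\hat{f}(E)<\hat{f}(C)+\hat{f}(E\setminus C)$, equivalently $r_f(f\circ E)<r_f(f\circ C)+r_f(f\circ(E\setminus C))$ in the induced matroid ${\cal M}_f(f\circ K(V),{\cal P})$ by Lemma~\ref{lemma:circ}. By an intersection-of-closures argument inside this matroid, combined with the symmetry of $f$ under permuting the copies of a single edge (which forces that if one copy of $uv\in K(V)$ lies in the closure of a set of the form $f\circ X$ then so do all the others), this rank deficit should produce an edge $uv\in K(V)$ with $f\circ uv\subseteq{\rm cl}_M(f\circ C)\cap{\rm cl}_M(f\circ(E\setminus C))$, and hence $uv\in{\rm cl}(C)\cap{\rm cl}(E\setminus C)$. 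The main obstacle is rigorously converting the rank deficit into such a concrete common-closure edge: the matroid-theoretic principle ``$r(A)+r(B)>r(A\cup B)$ forces ${\rm cl}(A)\cap{\rm cl}(B)$ to be nontrivial'' is false for abstract matroids (uniform matroids supply disjoint-closure counterexamples), and it must instead be proved using the block-diagonal structure of ${\cal M}_f$ and the explicit form ${\rm cl}_M(f\circ C)=f\circ K(V(C))$ given by Lemma~\ref{lemma:M_closure}. The structural constraint from the previous paragraph is then used to locate the desired edge: either some edge of $C_2$ already has both endpoints in $V(C)$ (and is picked directly), or, failing that, one compares $\hat{f}(E)$ with $\hat{f}(E\setminus C)$ and the edge constraint to locate an edge of $C$ itself belonging to ${\rm cl}(E\setminus C)$.
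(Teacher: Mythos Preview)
Your starting point---taking $C$ to be a maximal $P$-connected set with $C_1\subseteq C\subseteq E\setminus C_2$---is exactly the paper's, and your observation that ${\rm cl}(C)=K(V(C))$ is correct and useful. But the proposal has a genuine gap: you never establish that the edge you produce is incident to the \emph{specific} vertex $u$. The conclusion of the lemma is not merely that ${\rm cl}(C)\cap{\rm cl}(E\setminus C)$ is nonempty; it is that this intersection contains an edge of the form $uv$ with $u$ the fixed vertex from the hypothesis. Your final paragraph produces ``some edge of $C_2$ with both endpoints in $V(C)$'' or ``an edge of $C$ itself belonging to ${\rm cl}(E\setminus C)$,'' but in neither case do you argue why that edge touches $u$. (For instance, an edge $xy\in C_2$ with $x,y\in V(C)$ certainly lies in ${\rm cl}(C)\cap{\rm cl}(E\setminus C)$, but nothing forces $x=u$ or $y=u$.) Your own case split is also not quite the right one: the relevant dichotomy is whether $|V(C)\cap V(C_2)|\ge 2$, not whether an edge of $C_2$ has both endpoints in $V(C)$.

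The paper's proof confronts the $u$-incidence issue head-on. After obtaining some $vw\in{\rm cl}(C)\cap{\rm cl}(E\setminus C)$, it assumes $vw$ is not incident to $u$ and derives a contradiction. It picks a $P$-connected set $C'\subseteq E\setminus C$ with $vw\in{\rm cl}(C')$ and splits on whether $C'$ meets $C_2$. If $C'\cap C_2\neq\emptyset$, then $C_2\cup C'$ is $P$-connected and contains both $u$ (from $C_2$) and $v$ (from $C'$), so $uv\in{\rm cl}(C_2\cup C')\subseteq{\rm cl}(E\setminus C)$; since also $u,v\in V(C)$, $uv\in{\rm cl}(C)$, contradicting the assumption. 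If $C'\cap C_2=\emptyset$, then $C\cup C'$ is a strictly larger $P$-connected set still avoiding $C_2$, contradicting the maximality of $C$. This is where the hypothesis that $C_2$ spans $u$ is actually used---your argument never invokes it. Your structural constraint from paragraph~2 (that edges of $E\setminus C$ with both endpoints in $V(C)$ lie in $C_2$) is correct, but it is subsumed by the maximality argument and does not by itself pin down an edge through $u$.
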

\begin{proof}
Let us take an inclusion-wise maximal $P$-connected set $C$ such that $C_1\subseteq C\subseteq E\setminus C_2$.
Since $E$ is $P$-connected, 
we have ${\rm cl}(C)\cap {\rm cl}(E\setminus C)\neq \emptyset$, and hence there is an edge $vw\in K(V)$ such that $vw\in {\rm cl}(C)\cap {\rm cl}(E\setminus C)$.
If either $v=u$ or $w=u$,  then $C$ satisfies the required property.
Thus, suppose contrary that every edge in  ${\rm cl}(C)\cap {\rm cl}(E\setminus C)$ is not incident to $u$.
Let $C'$ be a $P$-connected set in $E\setminus C$ with $vw\in {\rm cl}(C')$.
Since $vw\in {\rm cl}(E\setminus C)$, such $C'$ exist. ($C'=\{vw\}$ may hold if $vw\in E\setminus C$.)

If $C_2\cap C'\neq \emptyset$, then $C_2\cup C'$ is $P$-connected, and hence ${\rm cl}(C_2\cup C')$ forms the complete graph on $V(C_2\cup C')$. 
Since  $u\in V(C_2)$ and $v\in V(C')$, we obtain  $uv\in {\rm cl}(C_2\cup C')\subseteq {\rm cl}(E\setminus C)$.
On the other hand, since $C$ is $P$-connected with $u,v\in V(C)$, we also have $uv\in {\rm cl}(C)$.
These however contradicts 
that every edge in  ${\rm cl}(C)\cap {\rm cl}(E\setminus C)$ is not incident to $u$

If $C_2\cap C'=\emptyset$, then $C\cup C'$ is $P$-connected 
since ${\rm cl}(C)\cap {\rm cl}(C')$ is nonempty,
and thus $C\cup C'$ is $P$-connected with  $C_1\subseteq C\cup C'\subseteq E\setminus C_2$ and is larger than $C$,
contradicting the choice of $C$. 
\end{proof}

\subsection{Generic body-rod-bar matroids}
\label{sec:rod_algebraic}
\subsubsection{Body-rod-bar frameworks}
A {\em body-rod-bar framework}  is a body-bar framework in which some of bodies are degenerate as $1$-dimensional bodies in the case of $d=3$.
In general dimensional case, a body-rod-bar framework can be defined as a structure consisting of $d$-dimensional subspaces (bodies) 
and $(d-2)$-dimensional flats (rods) mutually linked by $1$-dimensional lines (bars).
(The name of ``rod'' is actually appropriate only for $d=3$.)
We thus define a body-rod-bar framework as $(G,\bq,\br)$, where
\begin{itemize}
\item $G=(V,E)$ is a graph with a bipartition ${\cal P}=\{B,R\}$ of $V$;
\item $\br$ is a mapping called a {\em rod-configuration}:
\begin{align*}
\br:R&\rightarrow Gr(d-1,W) \subseteq \mbox{$\mathbb{P}(\bigwedge^{d-1} W)$} \\
v&\mapsto [\br_v]=[r_v^1,\dots, r_v^D]
\end{align*}
\item $\bq$ is a {\em bar-configuration}:
\begin{align*}
\bq:R&\rightarrow Gr(2,W) \subseteq \mbox{$\mathbb{P}(\bigwedge^{2} W)$} \\
e&\mapsto [\bq_e]=[q_e^1,\dots,q_e^D]
\end{align*}
satisfying  the {\em incidence condition}:
\begin{equation}
\label{eq:incidence}
\bq_e\cdot \br_v=0 \qquad \text{if $e\in E$ is incident to $v\in R$}.
\end{equation}
\end{itemize}
Namely, $\br(v)$ represents a rod associated with $v\in R$,
and $[\br_v]$ denotes the Pl{\"u}cker coordinate of the rod.
Recall that, for $[\bq] \in Gr(2,W)$ and $[\br]\in Gr(d-1,W)$, 
$\bq \cdot \br=0$ holds if and only if the corresponding linear subspaces have nonzero intersection
(equivalently, the corresponding flats have a nonempty intersection). 
Thus, the system (\ref{eq:incidence}) describes incidence constraints between rods and bars. 
Throughout the subsequent discussions, we also impose an additional condition that all rods are distinct, i.e.,  $\br(u)\neq \br(v)$ for any $u,v\in R$ with $u\neq v$. 

As in the case of body-bar frameworks, an infinitesimal motion of $(G,\bq,\br)$ is defined as $\bmm:V\rightarrow \bigwedge^{d-1} W$ satisfying (\ref{eq:bar_constraint}),
and $\bmm$ is called {\em trivial} if $\bmm(u)=\bmm(v)$ for all $u,v\in V$.

For each $v\in R$, define $\bmm_v:V\rightarrow \bigwedge^{d-1} W$ by $\bmm_v(v)=\br_v$ and $\bmm_v(u)=0$ for $u\in V\setminus\{v\}$.
Then, by incidence condition (\ref{eq:incidence}), $\bmm_v$ always satisfies (\ref{eq:bar_constraint}),
and $\bmm_v$ is an infinitesimal motion of $(G,\bq,\br)$.
Conventionally, we also include $\bmm_v$ in the set of trivial motions.
The set of all trivial motions thus forms a $(D+|R|)$-dimensional vector space.
If every motion of $(G,\bq,\br)$ is trivial, it is said to be {\em infinitesimally rigid}.

\subsubsection{Body-rod-bar matroids}
As defined in the body-bar matroid, the {\em body-rod-bar} matroid ${\cal BR}(G,\bq,\br)$ is defined as that on $E$ whose rank is the maximum size of 
independent linear equations in (\ref{eq:bar_constraint}) (for unknown $\bmm$).
From the definition, $(G,\bq,\br)$ is infinitesimally rigid if and only if the rank of ${\cal BR}(G,\bq,\br)$ is $D|V|-(D+|R|)$.
The following theorem is our main result.
\begin{theorem}
\label{theorem:main}
Let $G=(V,E)$ be a graph with a bipartition ${\cal P}=\{B,R\}$ of $V$ 
and $f$ be the function defined by (\ref{eq:truncated_function}).
Suppose $d\geq 3$.
Then, for almost all bar-configurations $\bq$ and almost all rod-configurations $\br$, ${\cal BR}(G,\bq,\br)={\cal M}_f(G,{\cal P})$.
Namely, $I\subseteq E$ is independent in ${\cal BR}(G,\bq,\br)$ if and only if
$|F|\leq D|V(F)|-D-|R(F)|$ for any nonempty $F\subseteq I$.
\end{theorem}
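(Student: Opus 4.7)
The plan is to establish the theorem by showing the two rank inequalities $r_{{\cal BR}} \leq r_f$ (valid for every configuration) and $r_{{\cal BR}} \geq r_f$ (for almost all $\bq$ and $\br$) separately. The first follows from a direct count of trivial motions, while the second is obtained by viewing ${\cal BR}(G,\bq,\br)$ as the result of iteratively applying a variant of Dilworth truncation---once for each rod in $R$---to the body-bar flat family underlying Theorem~\ref{theorem:body_bar}.

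For the upper bound, fix $F \subseteq E$ and consider the system (\ref{eq:bar_constraint}) restricted to $V(F)$. Its kernel contains the $D$-dimensional space of translation motions together with the rod motions $\bmm_v$ for $v \in R(F)$; these $D + |R(F)|$ elements are linearly independent because the $\br_v$ are pairwise distinct. Hence $r_{{\cal BR}}(F) \leq D|V(F)| - D - |R(F)| = f(F)$, and subadditivity across partitions combined with (\ref{eq:matroid_rank}) yields $r_{{\cal BR}}(F) \leq r_f(F)$ for every $F$.

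For the lower bound, interpret ${\cal BR}(G,\bq,\br)$ through flats in $\mathbb{P}(V_V)$. Starting from the body-bar family (\ref{eq:e_space2}), the incidence condition (\ref{eq:incidence}) forces the $V_v$-coordinates of every edge vector incident to a rod $v$ to lie in the hyperplane $\br_v^\perp$ inside $V_v \cong \bigwedge^2 W$. Ordering the rods $v_1,\dots,v_{|R|}$, impose these restrictions one rod at a time, treating each step as a \emph{vertex-local} Dilworth truncation: only the flats $A_e$ whose edge $e$ is incident to $v_i$ lose one dimension of rank, while the rest are unchanged. Invoking the truncation formula (\ref{eq:truncation}) once per rod and then taking the final minimum over partitions as in (\ref{eq:matroid_rank}) will give
\[
r_{{\cal BR}}(F) = \min\Bigl\{|F_0| + \sum_{j \geq 1}\bigl(D(|V(F_j)|-1) - |R(F_j)|\bigr)\Bigr\} = \min\Bigl\{|F_0| + \sum_{j \geq 1} f(F_j)\Bigr\} = r_f(F),
\]
the minimum running over partitions $\{F_0,F_1,\dots,F_k\}$ of $F$ with $F_j \neq \emptyset$ for $j \geq 1$.

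The principal obstacle is rigorously justifying the iteration. Theorem~\ref{theorem:truncation} is stated for a single generic hyperplane truncating a single flat family, whereas we need $|R|$ successive truncations---each localized to its own coordinate block---without disturbing the genericity established by the earlier ones. This is precisely where the promised extension of Lov{\'a}sz's technique, performing each truncation within a designated subspace, must be carried out. The combinatorial lemmas of Subsection~\ref{sec:rod_combinatorial} then translate the iterated rank formula into $r_f$ cleanly: Lemma~\ref{lemma:minimizer0} identifies the minimizing partition in $\hat{f}$ with the $P$-connected component decomposition, so the drop in rank at each truncation step is accounted for componentwise; Lemma~\ref{lemma:degree} supplies a vertex whose removal or simplification reduces the instance, and Lemma~\ref{lemma:stronger} provides the structural control needed to run an induction on $|V|$ that re-establishes genericity on the smaller graph.
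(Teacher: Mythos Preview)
Your overall strategy matches the paper's: establish $r_{{\cal BR}}\le r_f$ via the trivial-motion count (this is Lemma~\ref{lemma:easy_direction}), and obtain the reverse inequality by iterated vertex-local truncation starting from the body-bar flat family, so that after all $|R|$ truncations the rank formula collapses to $\hat f$ and hence to $r_f$ via (\ref{eq:matroid_rank}). The paper phrases the iteration as an induction on the lexicographic triple $(|V|,|R|,|E|)$ (Theorem~\ref{theorem:main2}), converting one rod to a body at each step and using Lemmas~\ref{lemma:degree} and~\ref{lemma:stronger} (packaged as Lemma~\ref{lemma:combinatorial}) to select the vertex $u$ at which the truncation formula of Lemma~\ref{lemma:last_truncation} can be proved. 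So the plan is essentially the paper's.

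Two points in your sketch are genuine gaps rather than merely compressed. First, your lower-bound argument lives entirely on the flat family ${\cal A}(\br)$ and the generic matroid ${\cal M}({\cal A}(\br))$, but ${\cal BR}(G,\bq,\br)$ is defined with $[\bq_e]$ constrained to $Gr(2,W)\cap H_{\br}(u)\cap H_{\br}(v)$, not to the full linear flat $A_e(\br)$. You never argue that representative points can be taken on this quadric without losing rank; in the paper this is a separate step (Theorem~\ref{theorem:main1}), requiring the irreducibility of the Grassmannian slice and an algebraic-independence argument, and it does not follow from the truncation machinery. Second, the paper's induction has a case $B\neq\emptyset$ in which a \emph{body} vertex is deleted and replaced by the clique on its neighbors; this is not a truncation step at all but is needed so that the induction hypothesis applies to the strictly smaller subgraphs arising in Lemma~\ref{lemma:flat_connected}. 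Your proposal (``order the rods $v_1,\dots,v_{|R|}$ and impose the restrictions one at a time'') reads as if any ordering of rods works and as if no body-removal step is ever needed; the paper shows instead that at each step one must \emph{choose} the vertex $u$ via Lemma~\ref{lemma:combinatorial}, and that the choice interacts with the $P$-connected structure of $E$ in a way your final paragraph only gestures at.
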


We need to introduce a notation for the proof.
Let $\br:R\rightarrow Gr(d-1,W)$ be a rod-configuration.
For each $v\in R$, let $H_{\br}(v)$ be the dual hyperplane to the point $[\br_v]$ in $\mathbb{P}(\bigwedge^2 W)$,
i.e., $H_{\br}(v)=\{[\bp]\in \mathbb{P}(\bigwedge^2 W) :  \bp\cdot \br_v=0\}$.
For easiness of the description, we also define $H_{\br}(v)$ for $v\in B$ to be $H_{\br}(v)=\mathbb{P}(\bigwedge^2 W)$.
Notice that, due to the incidence condition (\ref{eq:incidence}), the space of $\bq_{uv}$  is restricted to $Gr(2,W)\cap H_{\br}(u)\cap H_{\br}(v)$ for $uv\in E$. 
We hence define two subspaces associated with $e=uv\in E$ as follows: 
\begin{equation}
\label{eq:e_space3}
A_e(\br)=\{ 
[\Bvector{}{0}, \Bvector{}{\cdots}, \Bvector{}{0}, 
\Bvector{u}{\bal}, \Bvector{}{0}, \Bvector{}{\cdots}, \Bvector{}{0},
\Bvector{v}{-\bal}, \Bvector{}{0}, \Bvector{}{\cdots}, \Bvector{}{0}]
 : 
[\bal] \in \mathbb{P}(\mbox{$\bigwedge^2$} W)\cap H_{\br}(u) \cap H_{\br}(v)\},
\end{equation}
\begin{equation}
\label{eq:e_space4}
\hat{A}_e(\br)=\{
[\Bvector{}{0}, \Bvector{}{\cdots}, \Bvector{}{0}, 
\Bvector{u}{\bal}, \Bvector{}{0}, \Bvector{}{\cdots}, \Bvector{}{0},
\Bvector{v}{-\bal}, \Bvector{}{0}, \Bvector{}{\cdots}, \Bvector{}{0}]
 :  [\bal]\in Gr(2,W)\cap H_{\br}(u)\cap H_{\br}(v) \}.
\end{equation}
Also, let ${\cal A}(\br)=\{A_e(\br) : e\in E\}$, and as before let ${\cal A}_F(\br)=\{A_e(\br) : e\in F\}$ for $F\subseteq E$.

The proof of Theorem~\ref{theorem:main} proceeds as follows:
we first show that 
${\cal BR}(G,\bq,\br)$ is equal to the linear matroid ${\cal M}({\cal A}(\br))$ 
associated with the flat family ${\cal A}(\br)$ for almost all configurations (Theorem~\ref{theorem:main1}).
%This can be proved in the same way as Theorem~\ref{theorem:body_bar}. 
We then provide an explicit formula of  the rank of ${\cal A}(\br)$ in terms of the underlying graph $G$ (Theorem~\ref{theorem:main2})
and finally show that  ${\cal M}({\cal A}(\br))$  is indeed equal to ${\cal M}_f(G,{\cal P})$ 
(Corollary~\ref{theorem:main3}).

\begin{theorem}
\label{theorem:main1}
Let $G=(V,E)$ be a graph with a bipartition ${\cal P}=\{B,R\}$.
Then, for almost all rod-configurations $\br$ and bar-configurations $\bq$,
${\cal BR}(G,\bq,\br)={\cal M}({\cal A}(\br))$.
\end{theorem}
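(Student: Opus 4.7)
The strategy parallels the proof of Theorem~\ref{theorem:body_bar}. The linear matroid ${\cal BR}(G,\bq,\br)$ is represented by exactly the row vectors (\ref{eq:e_vector2}), but with $\bq_e$ for $e=uv$ constrained to lie in $\hat{A}_e(\br)$, which is a proper subvariety of the ambient flat $A_e(\br)$; on the other hand ${\cal M}({\cal A}(\br))$ is the matroid obtained by choosing arbitrary representative points $x_e \in A_e(\br)$ in generic position. As in the body-bar case the ``$\leq$'' inequality between ranks is automatic, so it suffices to show that, for almost all $\br$ and almost all $\bq$, the representative points $x_e \in \hat{A}_e(\br)$ determined by the bar configuration happen to satisfy the generic-position criterion~(\ref{eq:point_generic}) relative to ${\cal A}(\br)$.

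The plan is first to establish, for almost all rod-configurations $\br$, three properties of each $\hat{A}_e(\br)$ regarded as a subvariety of the linear flat $A_e(\br)$: (i) $\hat{A}_e(\br)$ linearly spans $A_e(\br)$; (ii) $\hat{A}_e(\br)$ is irreducible, so any hyperplane not containing it cuts it in a strictly lower-dimensional subvariety; and (iii) $\hat{A}_e(\br)$ admits a rational parametrization whose coordinates are rational functions, with coefficients in $\mathbb{Q}(\br)$, of some free parameters. I would handle these by case analysis on how many endpoints of $e$ lie in $R$: if none, $\hat{A}_e(\br)$ is projectively $Gr(2,W)$ itself (the body-bar case already treated); if one, it is the Schubert cycle of $2$-planes meeting the rod $\br_v$; if both, it is the intersection of two such Schubert cycles, which for generic $\br_u,\br_v$ intersect properly and irreducibly. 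Property~(i) follows from the non-degenerate embedding of $Gr(2,W) \subseteq \mathbb{P}(\bigwedge^2 W)$ combined with genericity of the cutting hyperplanes $H_{\br}(u), H_{\br}(v)$ (a Bertini-style argument), and (iii) is explicit, e.g.\ by parametrizing a $2$-plane meeting $\br_v$ as the span of a line $L_v \subseteq \br_v$ and a transverse direction.

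Given (i)--(iii), I would then choose $\bq$ so that the collection of free parameters describing $\bq_e \in \hat{A}_e(\br)$, taken over all $e\in E$, is algebraically independent over $\mathbb{Q}(\br)$; this is a generic condition on $\bq$. The generic-position verification then repeats the body-bar template: if $x_e \in \overline{X'-x_e}$ but $A_e(\br) \not\subseteq \overline{X'-x_e}$ for some $X' \subseteq X$ and $e=uv \in E$, take a hyperplane $H$ of $\mathbb{P}(V_V)$ containing $\overline{X'-x_e}$ but not $A_e(\br)$, whose coefficients are rational functions over $\mathbb{Q}(\br)$ of the free parameters of $\{x_{e'} : e'\neq e\}$. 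Property~(i) forces $H \not\supseteq \hat{A}_e(\br)$, and then (ii) makes $H \cap \hat{A}_e(\br)$ a proper subvariety; substituting the parametrization from (iii), the incidence $x_e \in H$ becomes a nontrivial algebraic relation between the free parameters of $x_e$ and those of the other $x_{e'}$, contradicting their algebraic independence. The main obstacle is the verification of (i)--(iii) in the ``rod-rod'' case (both endpoints in $R$), where one has to confirm irreducibility and non-degeneracy of the intersection of two generic Schubert cycles in $Gr(2,W)$ and supply an explicit rational parametrization valid in every dimension $d \geq 3$; once this is in place, the remainder of the argument is a direct transcription of the body-bar proof.
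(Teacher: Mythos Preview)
Your proposal is correct and follows essentially the same route as the paper's proof: both reduce to showing that representative points drawn from the constrained varieties $\hat{A}_e(\br)\subseteq A_e(\br)$ can be taken in generic position, via a rational parametrization and an algebraic-independence contradiction exactly as in Theorem~\ref{theorem:body_bar}. The paper carries this out more concretely---for $d=3$ it notes that $Gr(2,W)\cap H_{\br}(u)\cap H_{\br}(v)$ is a non-singular quadric in $\mathbb{P}^k$ (with $k\in\{3,4,5\}$ according to how many endpoints lie in $R$) for almost all $\br$, fixes an affine chart, and parametrizes by two free coordinates---whereas you phrase the same facts abstractly as irreducibility and non-degeneracy of Schubert-type intersections; your more careful bookkeeping over $\mathbb{Q}(\br)$ rather than $\mathbb{Q}$ is a minor improvement in precision over the paper's wording.
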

\begin{proof}
The proof is basically the same as that of Theorem~\ref{theorem:body_bar}.
Recall that ${\cal BR}(G,\bq,\br)$ is a linear matroid on $E$ in which each element $e=uv\in E$ is represented by
\begin{equation*}
(\Bvector{}{0}, \Bvector{\cdots}{\cdots}, \Bvector{}{0},
\Bvector{u}{\bq_e}, \Bvector{}{0}, \Bvector{\cdots}{\cdots}, \Bvector{}{0},
\Bvector{v}{-\bq_e}, \Bvector{}{0},\Bvector{\cdots}{\cdots}, \Bvector{}{0}),
\end{equation*}
where $[\bq_e]$ is restricted to $Gr(2,W)\cap H_{\br}(u)\cap H_{\br}(v)$ in the case of body-rod-bar frameworks.
Hence, to prove ${\cal BR}(G,\bq,\br)={\cal M}({\cal A}(\br))$,
it is sufficient to show that a representative point 
$x_{e}=[0,\dots, 0, x_e^1,\dots, x_e^D, 0,\dots, 0, -x_e^1,\dots, -x_e^D,0,\dots, 0]$ of $A_e(\br)$ can be taken from $\hat{A}_e(\br)$
so that $X=\{x_e :  e\in E\}$ is  in generic position (in the sense of definition (\ref{eq:point_generic})). 

Let us consider the case $d=3$.
Let us take $\br$ so that $\br(u)\neq \br(v)$ for each $u,v\in V$ with $u\neq v$.
Then, for each $e=uv\in E$, $A_e(\br)$ is isomorphic to 
$\mathbb{P}(\bigwedge^2 W)\cap H_{\br}(u)\cap H_{\br}(v)=\mathbb{P}^k$,
where $k=3$ if $u,v\in R$; $k=4$ if either $u\in R$ or $v\in R$; otherwise $k=5$.
Recall that the quadratic variety $Gr(2,W)\cap H_{\br}(u)\cap H_{\br}(v)$ is singular 
if the associated matrix is singular.
Since the determinant of the associated matrix is a polynomial of entries of $\br(u)$ and $\br(v)$,
$Gr(2,W)\cap H_{\br}(u)\cap H_{\br}(v)$ becomes a non-singular quadratic variety of $\mathbb{P}^k$ for almost all rod-configurations $\br$.
Then, by setting $x_e^4=1$, it can be easily checked that  $Gr(2,W)\cap H_{\br}(u)\cap H_{\br}(v)$ can be parameterized by $x_e^1$ and $x_e^2$ such that
the rest of coordinates $x_e^3,\dots, x_e^6$ are described as rational functions of $x_e^1$ and $x_e^2$ with coefficients in $\mathbb{Q}$.
If we take $x_e$ so that $\{x_e^1,x_e^2 : e\in E\}$ is algebraically independent over $\mathbb{Q}$,
$X=\{x_e : e\in E\}$ is in generic position by the same reason as  the proof of Theorem~\ref{theorem:body_bar}.

The general $d$-dimensional case follows in the same way, 
as each coordinate of a point in $Gr(2,W)\cap \mathbb{A}$ is written as 
a rational function of $2(d-1)$ parameters among $x^{i,j}_e\ (1\leq i<j\leq d+1)$, 
if $Gr(2,W)$ is restricted to a $(D-1)$-dimensional affine space $\mathbb{A}$ (see, e.g., \cite{Hassett200705}).
\end{proof}

As noted above, ${\cal BR}(G,\bq,\br)$ takes the rank at most $D|V|-D-|R|$ since the corresponding framework $(G,\bq,\br)$ always has $D+|R|$ trivial motions.
The same argument can be applied to show the following fact. 
\begin{lemma}
\label{lemma:easy_direction}
Let $G=(V,E)$ be a graph with a bipartition ${\cal P}=\{B,R\}$ of $V$.
Then, for any rod-configuration $\br$ such that $\br(u)\neq \br(v)$ for $u,v\in R$ with $u\neq v$,
$\rank(\overline{{\cal A}(\br)})\leq D|V|-D-|R|$.
\end{lemma}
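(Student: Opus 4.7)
The plan is to realise $\rank(\overline{{\cal A}(\br)})$ as $D|V|$ minus the dimension of the orthogonal complement of the linear subspace $\mathcal{S}\subseteq V_V$ underlying $\overline{{\cal A}(\br)}$, and then to exhibit $D+|R|$ linearly independent vectors in $\mathcal{S}^{\perp}$. Via the Hodge star identification $\bigwedge^{d-1}W\cong \bigwedge^{2}W$, an element of $V_V$ can be viewed as a tuple $\bmm=(\bmm(u))_{u\in V}$ with $\bmm(u)\in\bigwedge^{d-1}W$. Pairing against the generator of $\mathcal{S}$ from edge $e=uv\in E$ and $[\bal]\in H_{\br}(u)\cap H_{\br}(v)$ produces the constraint $\bal\cdot(\bmm(u)-\bmm(v))=0$. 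Since $H_{\br}(u)\cap H_{\br}(v)$ is (the projectivisation of) the linear subspace $\mathrm{span}(\br_u,\br_v)^{\perp}$, with the convention $\br_w=0$ for $w\in B$, this is equivalent to $\bmm(u)-\bmm(v)\in\mathrm{span}(\br_u,\br_v)$ for every edge $e=uv$.

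Next, I will produce two explicit families of motions satisfying this condition. First, for every $\bc\in\bigwedge^{d-1}W$ the constant motion $\bmm_{\bc}(u)=\bc$ for all $u\in V$ trivially fulfills $\bmm_{\bc}(u)-\bmm_{\bc}(v)=0$, contributing a $D$-dimensional subspace of $\mathcal{S}^{\perp}$. Second, for each $v\in R$ the rod motion $\bmm_v$ defined by $\bmm_v(v)=\br_v$ and $\bmm_v(w)=0$ for $w\neq v$ lies in $\mathcal{S}^{\perp}$: for any edge $e=uw\in E$ the difference $\bmm_v(u)-\bmm_v(w)$ is either $0$ (when $v\notin\{u,w\}$) or $\pm\br_v$ (when $v\in\{u,w\}$), and in either case it lies in $\mathrm{span}(\br_u,\br_w)$.

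Finally, I will verify that the $D+|R|$ motions consisting of $\{\bmm_{\bc}:\bc$ in a basis of $\bigwedge^{d-1}W\}\cup\{\bmm_v:v\in R\}$ are linearly independent, whence $\dim\mathcal{S}^{\perp}\geq D+|R|$ and the bound $\rank(\overline{{\cal A}(\br)})\leq D|V|-D-|R|$ follows. Assume $\bmm_{\bc}+\sum_{v\in R}\lambda_v\bmm_v\equiv 0$. Evaluating at any $u\in B$ (whenever $B\neq\emptyset$) forces $\bc=0$, after which evaluating at $v\in R$ gives $\lambda_v\br_v=0$ and therefore $\lambda_v=0$; when $B=\emptyset$, the hypothesis that the projective points $[\br_v]$ are pairwise distinct ensures that any two $\br_v,\br_w$ with $v\neq w$ are linearly independent as vectors in $\bigwedge^{d-1}W$, and evaluating at two distinct rods then forces $\bc=0$ and all $\lambda_v=0$. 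This final independence check is the only subtle step, and it is exactly where the distinctness assumption on $\br$ is used.
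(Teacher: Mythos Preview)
Your argument is correct and is exactly the approach the paper has in mind: the text just before the lemma says that $(G,\bq,\br)$ always has $D+|R|$ trivial motions (the constant motions and the rod motions $\bmm_v$), and that ``the same argument'' yields Lemma~\ref{lemma:easy_direction}. You have simply made this argument precise by identifying $\mathcal{S}^{\perp}$ with the motion space, exhibiting the same $D+|R|$ motions, and checking their linear independence (where the distinctness hypothesis $\br(u)\neq\br(v)$ is indeed what makes the $B=\emptyset$ case go through).
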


The following is a key result for proving Theorem~\ref{theorem:main}.
\begin{theorem}
\label{theorem:main2}
Let $G=(V,E)$ be a graph with a bipartition ${\cal P}=\{B,R\}$ of $V$.
If $d\geq 3$, then for almost all rod-configurations $\br$,
\begin{equation}
\label{eq:main2}
\rank(\overline{{\cal A}(\br)})=\min\{\mbox{$\sum_{i=1}^k$}(D|V(E_i)|-D-|R(E_i)|)\},
\end{equation}
where the minimum is taken over all partitions $\{E_1,\dots, E_k\}$ of $E$ into nonempty subsets.
Namely, the linear polymatroid ${\cal PM}({\cal A}(\br))$ defined by ${\cal A}(\br)$ is equal to the combinatorial polymatroid ${\cal PM}_f(G,{\cal P})$ 
for almost all rod-configurations $\br$.
\end{theorem}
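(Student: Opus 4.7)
The plan is to establish (\ref{eq:main2}) by proving the two inequalities separately. The upper bound $\leq$ is immediate: for any partition $\{E_1,\dots,E_k\}$ of $E$, subadditivity of rank together with Lemma~\ref{lemma:easy_direction} applied to each induced subgraph $(V(E_i),E_i)$ gives $\rank(\overline{{\cal A}(\br)}) \leq \sum_i \rank(\overline{{\cal A}_{E_i}(\br)}) \leq \sum_i (D|V(E_i)|-D-|R(E_i)|)$, and minimizing over partitions yields the desired inequality.

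For the matching lower bound, I would proceed by induction on $|R|$, implementing the rod constraints as iterated Dilworth truncations. The base case $|R|=0$ is the body-bar setting: by Theorem~\ref{theorem:body_bar} the polymatroid ${\cal PM}({\cal A})$ underlies the $D$-fold graphic matroid union, whose rank is $\min \sum_i D(|V(E_i)|-1)$, matching (\ref{eq:main2}) when $R=\emptyset$. For the inductive step, fix $v \in R$, set ${\cal P}'=\{B\cup\{v\},R\setminus\{v\}\}$, and let $\br' = \br|_{R\setminus\{v\}}$, so that the inductive hypothesis supplies the formula for $\rank(\overline{{\cal A}(\br')})$. Adding the constraint from the new rod $v$ replaces each flat $A_e(\br')$ with $e$ incident to $v$ by its intersection with the hyperplane $H_v \subseteq \mathbb{P}(V_V)$ obtained by pulling back $H_{\br}(v)\subseteq \mathbb{P}(\bigwedge^2 W)$ along the block-projection $\pi_v\colon V_V \to V_v$; flats $A_e(\br')$ for $e$ not incident to $v$ lie in $H_v$ already and are unchanged.

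The central claim is that for generic $\br(v)$, this intersection realizes a Dilworth-type truncation performed inside the designated subspace $V_v$: it decreases $\rank(\overline{{\cal A}_F(\br')})$ by exactly $[v\in V(F)]$ for every $F \subseteq E$. Granting this, the formula for $\rank(\overline{{\cal A}(\br)})$ follows from the inductive formula by subtracting $[v\in V(E_i)]$ from each term, which exactly accounts for the extra $-|R(E_i)|$ contribution to (\ref{eq:main2}). To identify the partition that attains the minimum after the new truncation, I would invoke Lemma~\ref{lemma:minimizer0}, which tells us that optimal partitions coincide with $P$-connected component decompositions; Lemmas~\ref{lemma:degree} and \ref{lemma:stronger} then control how $P$-connected components merge or split when a body is reclassified as a rod, allowing the inductive step to track exactly which components are affected by $H_v$.

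The main obstacle is verifying that $H_v$ is generic in the sense required to apply Theorem~\ref{theorem:truncation}. Unlike an arbitrary hyperplane of $\mathbb{P}(V_V)$, $H_v$ is forced to act only on the $V_v$-block, so condition (\ref{eq:hyperplane_generic}) is not automatic. Instead it must be verified by a polynomial-nonvanishing argument on the entries of $\br(v)\in Gr(d-1,W)$, showing that the degenerate configurations form a proper algebraic subvariety. Establishing this ``truncation within a designated subspace'' principle, highlighted in the introduction, is where the algebraic genericity of the rod-configuration must be coordinated with the combinatorial $P$-connectivity structure, and the hypothesis $d\geq 3$ (equivalently $D\geq 6$ as used in Lemma~\ref{lemma:degree}) becomes critical to guarantee that each inductive step can locate a reducible vertex to drive the reduction.
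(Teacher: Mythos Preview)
Your overall strategy matches the paper's: the $\leq$ direction is immediate, and the $\geq$ direction proceeds by converting a rod into a body and tracking the effect of the hyperplane $H_v$ as a Dilworth-type truncation inside the block $V_v$. However, two of your formulations are too optimistic and, as written, constitute genuine gaps.

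First, your ``central claim'' that $\rank(\overline{{\cal A}_F(\br)}) = \rank(\overline{{\cal A}_F(\br')}) - [v\in V(F)]$ for \emph{every} $F\subseteq E$ is false. What does hold generically is $\rank\bigl((\overline{{\cal A}_F(\br')})\cap H_v\bigr) = \rank(\overline{{\cal A}_F(\br')}) - [v\in V(F)]$ (this is (\ref{eq:v_generic2})), but $\overline{\{A_e(\br')\cap H_v: e\in F\}}$ can be strictly smaller than $(\overline{{\cal A}_F(\br')})\cap H_v$. For instance, if $F=\{vu_1,vu_2\}$ with $u_1,u_2\in B$, the rank drops by $2$, not $1$. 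The correct statement is a partition formula (Lemma~\ref{lemma:last_truncation}),
\[
\rank(\overline{{\cal A}(\br')\cap H})=\min_{\{E_i\}}\sum_i\bigl(\rank(\overline{{\cal A}_{E_i}(\br')})-\chi_u(\overline{{\cal A}_{E_i}(\br')})\bigr),
\]
and the single-term identity $\rank(\overline{{\cal A}'\cap H})=\rank(\overline{{\cal A}'})-1$ is established only after reducing to the case where ${\cal A}'\cap H$ is connected. This is exactly why the vertex cannot be arbitrary: the paper uses Lemma~\ref{lemma:combinatorial} (built on Lemmas~\ref{lemma:degree} and \ref{lemma:stronger}) to locate a specific $u$ admitting $P$-connected sets $C$ (and possibly $C'$) with $\delta_G(u)\subset C\cup C'$, so that the modular computation $\rank(\overline{{\cal A}_C'\cap H})+\rank(\overline{{\cal A}_{E\setminus C}'\cap H})-\rank\bigl((\overline{{\cal A}_C'\cap H})\cap(\overline{{\cal A}_{E\setminus C}'\cap H})\bigr)$ can be evaluated. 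Your sketch names the right lemmas but does not explain this mechanism; ``how $P$-connected components merge or split'' is not the role they play.

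Second, induction on $|R|$ alone is not sufficient. Inside the truncation argument one must know that certain proper sub-families ${\cal A}_C(\br)$ with $C\subsetneq E$ are connected (Lemma~\ref{lemma:flat_connected}), and this invokes the theorem on the smaller graph $(V(C),C)$ with possibly the \emph{same} number of rods. The paper therefore inducts on the lexicographic triple $(|V|,|R|,|E|)$ and includes a separate, much easier, case $B\neq\emptyset$: when a body vertex $u$ exists, one deletes $u$, completes $N(u)$ to a clique, and reduces $|V|$ directly via (\ref{eq:contain}) without any truncation. The truncation step is performed only in the case $B=\emptyset$.
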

One direction of Theorem~\ref{theorem:main2} is straightforward from Lemma~\ref{lemma:easy_direction};
For any partition $\{E_1,\dots, E_k\}$ of $E$, we have
%\begin{equation}
%\label{eq:easy_direction} 
$\rank(\overline{{\cal A}(\br)})\leq \sum_{i=1}^k \rank(\overline{{\cal A}_{E_i}(\br)})\leq \sum_{i=1}^k(D|V(E_i)|-D-|R(E_i)|)$.
%\end{equation}
Since the proof is not short, the converse direction is left to the next subsection.

\begin{corollary}
\label{theorem:main3}
Let $G=(V,E)$ be a graph with a bipartition ${\cal P}=\{B,R\}$ of $V$.
If $d\geq 3$, then  ${\cal M}({\cal A}(\br))={\cal M}_f(G,{\cal P})$ for almost all rod-configurations $\br$.
\end{corollary}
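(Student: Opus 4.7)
The plan is to obtain Corollary~\ref{theorem:main3} as a direct consequence of Theorem~\ref{theorem:main2} together with the matroid-from-polymatroid construction from Section~\ref{sec:flats}. Fix a rod-configuration $\br$ in the full-measure set provided by Theorem~\ref{theorem:main2}, so that ${\cal PM}({\cal A}(\br)) = {\cal PM}_f(G,{\cal P})$ as polymatroids on $E$. In particular, $\rank(\overline{{\cal A}_{F'}(\br)}) = \hat{f}(F')$ holds for every $F'\subseteq E$.

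Next, I would apply Theorem~\ref{theorem:flat_matroid} to each subset $F\subseteq E$ (using its localized form spelled out in the remark after the theorem), which expresses the rank function of the generic matroid ${\cal M}({\cal A}(\br))$ as
\begin{equation*}
\rank_X(F) \;=\; \min_{F'\subseteq F}\bigl\{|F\setminus F'| + \rank(\overline{{\cal A}_{F'}(\br)})\bigr\}.
\end{equation*}
Substituting the polymatroid identity from the first step and then invoking formula (\ref{eq:g_matroid}) gives
\begin{equation*}
\rank_X(F) \;=\; \min_{F'\subseteq F}\bigl\{|F\setminus F'| + \hat{f}(F')\bigr\} \;=\; r_f(F),
\end{equation*}
which is exactly the rank of $F$ in ${\cal M}_f(G,{\cal P})$. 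Since the two rank functions agree on every $F\subseteq E$, the matroids coincide.

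Because each step is a quotation of an already-established identity, I do not anticipate any serious obstacle; the corollary is essentially formal given Theorem~\ref{theorem:main2}. The only point worth a brief check is that the ``almost all'' conclusion of Theorem~\ref{theorem:main2} is phrased at the polymatroid level, so a single application simultaneously pins down $\rank(\overline{{\cal A}_{F'}(\br)})$ for every subfamily indexed by $F'\subseteq E$, and no additional genericity hypothesis is needed to pass from the polymatroid identity to the induced-matroid identity.
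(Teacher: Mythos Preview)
Your argument is correct and is essentially identical to the paper's own proof: both combine Theorem~\ref{theorem:main2} with Theorem~\ref{theorem:flat_matroid} and then read off the induced-matroid rank formula. The only cosmetic difference is that the paper quotes the expanded partition form (\ref{eq:matroid_rank}) while you invoke the equivalent compact form (\ref{eq:g_matroid}) via $\hat{f}$.
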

\begin{proof}
This directly follows from Theorem~\ref{theorem:main2} and general results on polymatroids reviewed in Section~\ref{sec:flats}.
Indeed, by Theorem~\ref{theorem:flat_matroid} and Theorem~\ref{theorem:main2}, the rank of $F\subseteq E$ in ${\cal M}({\cal A}(\br))$ is written as 
\begin{equation*}
%\label{eq:main3_1}
\min\{|F_0|+\mbox{$\sum_{i=1}^k$}(D|V(F_i)|-D-|R(F_i)|)\}
\end{equation*}
where the minimum is taken over all partitions $\{F_0,F_1,\dots, F_k\}$ of $F$
such that $F_1,\dots, F_k\neq \emptyset$.
This is exactly the rank formula (\ref{eq:matroid_rank}) of the matroid induced by $f$.
\end{proof}
Combining Theorem~\ref{theorem:main1} and Corollary~\ref{theorem:main3},
we conclude the proof of Theorem~\ref{theorem:main}.

\medskip

{\em Remark.} Due to the absence of Lemma~\ref{lemma:degree}, the proof of Theorem~\ref{theorem:main2} (given in the next subsection) could not be applied to the 2-dimensional case.
Although Theorem~\ref{theorem:main2} can be proved even for the 2-dimensional case with a slightly different manner, we would not go into the detail as there are already many simpler proofs 
for this case~\cite{lovasz:1982,whiteley:88,tay:whiteley:1985,Whitley:1997}.  \qed

\medskip
Theorem~\ref{theorem:main} is restated in terms of rigidity as follows.
\begin{corollary}
\label{cor:body_rod_bar}
Let $G=(V,E)$ be a graph with a bipartition ${\cal P}=\{B,R\}$ of $V$.
Then, there exists 
a bar-configuration $\bq$ and a rod-configuration $\br$ 
such that the body-rod-bar framework $(G,\bq,\br)$ is minimally infinitesimally rigid 
(i.e., removing any bar results in a flexible framework) in $\mathbb{R}^d$ 
if and only if $G$ satisfies the following counting conditions:
\begin{itemize}
\item $|E|=D|B|+(D-1)|R|-D$;
\item $|F|\leq D|B(F)|+(D-1)|R(F)|-D$ for any nonempty $F\subseteq E$.
\end{itemize}
\end{corollary}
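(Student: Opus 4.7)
The plan is to translate the rigidity statement into a matroid-independence statement about ${\cal BR}(G,\bq,\br)$, then invoke Theorem~\ref{theorem:main} to reduce everything to the combinatorial matroid ${\cal M}_f(G,{\cal P})$. First I would recall that for any body-rod-bar framework $(G,\bq,\br)$ with all rods distinct, the space of trivial infinitesimal motions has dimension $D+|R|$, so the framework is infinitesimally rigid if and only if the rank of ${\cal BR}(G,\bq,\br)$ equals $D|V|-D-|R|$. Minimal infinitesimal rigidity is equivalent to saying that $E$ is moreover an independent set in ${\cal BR}(G,\bq,\br)$, and these two conditions together force $|E|=D|V|-D-|R|$. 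Using the identity $D|V(F)|-D-|R(F)|=D|B(F)|+(D-1)|R(F)|-D=f(F)$ (where $f$ is the function from \eqref{eq:truncated_function}), the two counting conditions in the corollary say exactly that $|E|=f(E)$ and that $|F|\leq f(F)$ for every nonempty $F\subseteq E$, i.e., that $E$ is a base of ${\cal M}_f(G,{\cal P})$.

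Next I would prove the ``if'' direction: assuming the counting conditions, $E$ is independent in ${\cal M}_f(G,{\cal P})$ and $|E|=D|V|-D-|R|=r_f(E)$. By Theorem~\ref{theorem:main}, for almost all rod-configurations $\br$ and almost all bar-configurations $\bq$ satisfying the incidence condition, ${\cal BR}(G,\bq,\br)={\cal M}_f(G,{\cal P})$. For any such $(\bq,\br)$, $E$ is independent in ${\cal BR}(G,\bq,\br)$ and the rank equals $D|V|-D-|R|$, which means $(G,\bq,\br)$ is minimally infinitesimally rigid, producing the required configurations.

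For the ``only if'' direction, suppose there is some $(\bq,\br)$ making $(G,\bq,\br)$ minimally infinitesimally rigid; then $E$ is independent in ${\cal BR}(G,\bq,\br)$ with $|E|=D|V|-D-|R|$. Independence of $E$ is an open condition on $(\bq,\br)$ (it is the non-vanishing of some maximal minor of the representing matrix), so independence of $E$ in ${\cal BR}$ persists on a dense open subset of admissible configurations. In particular, it holds at a configuration where Theorem~\ref{theorem:main} applies, so $E$ is independent in ${\cal M}_f(G,{\cal P})$, yielding $|F|\leq f(F)$ for every nonempty $F\subseteq E$, and the first counting condition $|E|=D|B|+(D-1)|R|-D$ is immediate from $|E|=D|V|-D-|R|$.

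Since all the machinery is already provided by Theorem~\ref{theorem:main}, there is no real obstacle here; the only subtle point is the semicontinuity argument in the ``only if'' direction, which reduces to observing that the rank function of the parametric matrix $B(\bq)$ (as in Subsection~\ref{subsec:body_bar}) can only increase on the Zariski-open set where Theorem~\ref{theorem:main} asserts equality with ${\cal M}_f$, so independence transfers from the given configuration to a generic one.
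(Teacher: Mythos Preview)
Your proof is correct and is exactly the unpacking the paper has in mind when it says that Theorem~\ref{theorem:main} ``is restated in terms of rigidity'' as this corollary; the paper gives no separate argument. One small simplification for the ``only if'' direction: rather than the semicontinuity phrasing, you can just note that by definition the generic matroid has, for every $F\subseteq E$, rank at least that of ${\cal BR}(G,\bq,\br)$ for any particular $(\bq,\br)$, so independence of $E$ at one configuration immediately forces independence of $E$ in ${\cal M}_f(G,{\cal P})$ via Theorem~\ref{theorem:main}.
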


Tay's combinatorial characterization of rod-bar frameworks is an easy consequence.
\begin{corollary}[Tay\cite{tay1991linking,tay:89}]
\label{cor:rod_bar}
Let $G=(V,E)$ be a graph.
Then, there exists 
a bar-configuration $\bq$ and a rod-configuration $\br$ 
such that the rod-bar framework $(G,\bq,\br)$ is minimally infinitesimally rigid in $\mathbb{R}^d$ 
if and only if $G$ satisfies the following counting conditions:
\begin{itemize}
\item $|E|=(D-1)|V|-D$;
\item $|F|\leq (D-1)|V(F)|-D$ for any nonempty $F\subseteq E$.
\end{itemize}
\end{corollary}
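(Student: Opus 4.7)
The plan is to derive Corollary~\ref{cor:rod_bar} as an immediate specialization of Corollary~\ref{cor:body_rod_bar}. A rod-bar framework is precisely a body-rod-bar framework in which there are no (full-dimensional) bodies, so every vertex is a rod. In the notation of the body-rod-bar setting this corresponds to the bipartition ${\cal P}=\{B,R\}$ with $B=\emptyset$ and $R=V$.

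First, I would observe that under this identification the counting data of Corollary~\ref{cor:body_rod_bar} simplifies as follows: $|B(F)|=0$ and $|R(F)|=|V(F)|$ for every $F\subseteq E$. Substituting into the two inequalities of Corollary~\ref{cor:body_rod_bar} gives
\[
|E|=D|B|+(D-1)|R|-D=(D-1)|V|-D,
\]
and, for each nonempty $F\subseteq E$,
\[
|F|\leq D|B(F)|+(D-1)|R(F)|-D=(D-1)|V(F)|-D,
\]
which are exactly the two conditions appearing in Corollary~\ref{cor:rod_bar}.

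Second, I would note that the existence of generic bar- and rod-configurations yielding minimal infinitesimal rigidity is preserved: Corollary~\ref{cor:body_rod_bar} already asserts the existence of such $(\bq,\br)$ for any graph $G$ with its bipartition satisfying the above counts, and this existence statement passes verbatim to the rod-only case since $B=\emptyset$ is an allowed choice of bipartition. Conversely, a minimally infinitesimally rigid rod-bar framework is a minimally infinitesimally rigid body-rod-bar framework with $B=\emptyset$, so Corollary~\ref{cor:body_rod_bar} forces the counting conditions to hold. There is no real obstacle here — the only thing to verify is that the definitions of infinitesimal motion, trivial motion (which now contribute $D+|R|=D+|V|$ dimensions), and minimal rigidity specialize consistently, which they do directly from the definitions given in Subsection~\ref{sec:rod_algebraic}. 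Thus the corollary follows from Corollary~\ref{cor:body_rod_bar} by setting $B=\emptyset$ and $R=V$.
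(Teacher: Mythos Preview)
Your proposal is correct and follows essentially the same approach as the paper: specialize Corollary~\ref{cor:body_rod_bar} to the case $B=\emptyset$, $R=V$, and observe that $D|B(F)|+(D-1)|R(F)|-D=(D-1)|V(F)|-D$.
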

\begin{proof}
The rod-bar framework $(G,\bq,\br)$ is a body-rod-bar framework with $R=V$ and $B=\emptyset$.
In this case $D(|V(F)|-1)-|R(F)|=(D-1)|V(F)|-D$ for each $F\subseteq E$.
Therefore, the statement follows from Corollary~\ref{cor:body_rod_bar}.
\end{proof}

\subsection{Proof of Theorem~\ref{theorem:main2}}
\label{subsec:main2}
\begin{proof}
We have already seen ``$\leq$'' direction of (\ref{eq:main2}).
The converse direction is proved  by induction on the lexicographical ordering of the triples $(|V|,|R|,|E|)$.
Since the base case is trivial, let us consider the general case.
Since $A_e(r)=A_{e'}(r)$ for any parallel $e$ and $e'$, we may assume 
that $G$ is simple throughout the proof.

We split the proof into two cases depending on whether $B=\emptyset$ or not.

\subsubsection{Case of $B\neq \emptyset$}
Let us first consider the easier case where there is a vertex $u\in B$.
Let $N(u)=\{v_1,\dots, v_t\}$ be the neighbors of $u$ in $G$.
We remove $u$ and insert the edge set $K(N(u))$, that is, the edge set of the complete graph on $N(u)$.
Let $H=(V-u, E\setminus \delta_G(u)\cup K(N(u)))$ be the resulting graph with the bipartition $\{B-u, R\}$ of $V-u$.

Let $\{E_1^*,\dots, E_k^*\}$ be the $P$-connected component decomposition of $E(H)$ in ${\cal PM}_f(H)$.
By Lemma~\ref{lemma:minimizer0}, $\{E_1^{*},\dots, E_k^{*}\}$ is a minimizer of the right hand side of (\ref{eq:main2}) for $E(H)$.
By induction, we have 
\begin{equation}
\label{eq:case1_2}
\rank(\overline{\{A_e(\br) : e\in E(H)\}})=\mbox{$\sum_{i=1}^k f(E_i^*)$}
\end{equation}
for almost all rod-configurations $\br:R\rightarrow Gr(d-1,W)$.
%Notice that, since $\{E_1,\dots, E_k\}$ satisfies (\ref{eq:easy_direction}) with the equality,
%we actually have
%\begin{equation}
%\label{eq:ase1_3}
%\rank(\overline{\{A_e(\br):e\in E_i^*\}})=f(E_i^*)
%\end{equation}
%for each $i=1,\dots, k$. 

If $N(u)=\{v\}$ for some $v\in V$, then $E=E(H)+uv$.
It is easy to see ${\cal A}(\br)=\overline{\{A_e(\br) : e\in E(H)\}}\oplus A_{uv}(\br)$,
and hence $\rank(\overline{{\cal A}(\br)})=\rank(\overline{\{A_e(\br) : e\in E(H)\}})+\rank(A_{uv}(\br))=\sum_{i=1}^kf(E_i^*)+f(\{uv\})$,
implying ``$\geq$'' direction of (\ref{eq:main2}) since $\{E_1^*,\dots, E_k^*,\{uv\}\}$ is a partition of $E$.

Thus, let us assume $|N(u)|\geq 2$.
Since $K(N(u))$ is a clique in $H$, it is straightforward to check that $K(N(u))$ is $P$-connected in ${\cal PM}_f(H)$, and hence
a $P$-connected component, say $E_k^*$, contains $K(N(u))$ as a subset. 
This implies 
\begin{equation}
\label{eq:case1_1}
f(E_k^*\setminus K(N(u))\cup \delta_G(u))=f(E_k^*)+D.
\end{equation}

Observe that, for any $vw\in K(N(u))$, we have 
\begin{equation}
\label{eq:contain}
A_{vw}(\br)\subseteq \overline{A_{vu}(\br) \cup A_{uw}(\br)}.
\end{equation}
Indeed, any element of $A_{vw}(\br)$ is written as 
\begin{equation*}
[\Bvector{}{0}, \Bvector{\cdots}{\cdots}, \Bvector{}{0},
\Bvector{v}{\bal}, \Bvector{}{0}, \Bvector{\cdots}{\cdots}, \Bvector{}{0},
\Bvector{w}{-\bal}, \Bvector{}{0},\Bvector{\cdots}{\cdots}, \Bvector{}{0}],
\end{equation*}
for some $\bal\in \mathbb{P}(\bigwedge^2 W)\cap H_{\br}(v)\cap H_{\br}(w)$.
This can be decomposed as
\begin{equation*}
[\Bvector{}{0}, \Bvector{\cdots}{\cdots}, \Bvector{}{0},
\Bvector{v}{\bal}, \Bvector{}{0}, \Bvector{\cdots}{\cdots}, \Bvector{}{0},
\Bvector{u}{-\bal}, \Bvector{}{0},\Bvector{\cdots}{\cdots}, \Bvector{}{0}]+
[\Bvector{}{0}, \Bvector{\cdots}{\cdots}, \Bvector{}{0},
\Bvector{u}{\bal}, \Bvector{}{0}, \Bvector{\cdots}{\cdots}, \Bvector{}{0},
\Bvector{w}{-\bal}, \Bvector{}{0},\Bvector{\cdots}{\cdots}, \Bvector{}{0}],
\end{equation*}
where these two terms are contained in $A_{vu}(\br)$ and $A_{uw}(\br)$, respectively, because $H_{\br}(u)=\mathbb{P}(\bigwedge^2 W)$ by $u\in B$.  

(\ref{eq:contain}) implies $\overline{\{A_e(\br) : e\in E(H)\}}\subseteq \overline{{\cal A}(\br)}$.
Moreover, we can always take independent $D$ points $p_1,\dots, p_D$ from $\{A_e(\br) : e\in \delta(u)\}$ since $u\in B$ and $|N(u)|\geq 2$.
Note that they always satisfy $\overline{\{p_1,\dots, p_D\}}\cap \overline{\{A_e(\br) : e\in E(H)\}}=\emptyset$ since $u\notin V(H)$.
We thus obtain
\begin{equation}
\label{eq:case1_3}
\rank(\overline{{\cal A}(\br)})\geq\rank(\overline{\{A_e(\br) : e\in E(H)\}})+D.
\end{equation}
Combining (\ref{eq:case1_1}), (\ref{eq:case1_2}), and (\ref{eq:case1_3}), we obtain 
$\rank(\overline{{\cal A}(\br)})\geq \sum_{i=1}^{k-1}f(E_i^*)+f(E_k^*\setminus K(N(u))\cup \delta(u))$,
implying ``$\geq$'' direction of (\ref{eq:main2}) since $\{E_1^*,\dots, E_{k-1}^*, E_k^*\setminus K(N(u))\cup \delta(u)\}$ is a partition of $E$.
This completes the proof for case  $B\neq \emptyset$.

\subsubsection{Case of $B=\emptyset$}

For any $u\in V$, let ${\cal P}_u=\{B+u,R-u\}$. 
Note that, by induction, the linear polymatroid ${\cal PM}({\cal A}(\br'))$ is equal to ${\cal PM}_f(G,{\cal P}_u)$ 
for almost all rod-configurations $\br'$ on $R-u$.
Our proof is based on this inductive relation.
Intuitively speaking, we will replace a body associated with $u$  by a rod $\br(u)$.
This operation corresponds with restricting $\mathbb{P}(V_u)=\mathbb{P}(\bigwedge^2 W)$ 
to a hyperplane $H_{\br}(u)$ of $\mathbb{P}(\bigwedge^2 W)$, which is the dual of the point $\br(u)$. 
This operation is equivalent to the restriction of ${\cal A}(\br')$ to a special hyperplane $H$ in $\mathbb{P}(V_V)$ 
such that $H\cap \mathbb{P}(V_u)=H_{\br}(u)$ and $\mathbb{P}(V_v)\subset H$ for all $v\in V-u$.
This hyperplane $H$ is not generic within $\mathbb{P}(V_V)$ (and hence this operation is not Dilworth truncation), 
but we may take $H$ so that  $H\cap \mathbb{P}(V_u)$ is generic within $\mathbb{P}(V_u)$.
We will show that the naturally extended rank formula of Dilworth truncation holds for this operation for some $u\in V$.

The proof consists of sequence of lemmas.
We first define a generic hyperplane within $\mathbb{P}(V_u)$ for a vertex $u\in V$
and show the existence of generic hyperplanes in Lemma~\ref{lemma:existence}.
We then discuss about an extension of a rod-configuration $\br':R-u\rightarrow Gr(d-1,W)$ to $\br:R\rightarrow Gr(d-1,W)$,
where $\br$ is said to be an {\em extension} of $\br'$ if $\br(v)=\br'(v)$ for all $v\in V-u$.
We shall define a generic extension of a rod-configuration based on a generic hyperplane in  $\mathbb{P}(V_u)$.  
Then in Lemma~\ref{lemma:combinatorial} we shall show an existence of a vertex $u\in V$ having special properties 
and finally perform a variant of Dilworth truncation at $u$ in Lemma~\ref{lemma:last_truncation}.

For a flat $A$ of $\mathbb{P}(V_V)$ and a vertex $u\in V$, ${\rm proj}_u(A)$ denotes 
the orthogonal projection\footnote{More precisely, let $W'$ be the linear subspace of $V_V$ 
satisfying $A=\mathbb{P}(W')$, 
and let ${\rm proj}_u(W')$ be the orthogonal projection of $W'$ onto $V_u$. 
We define ${\rm proj}_u(A)$ by $\mathbb{P}({\rm proj}_u(W'))$.} of $A$ onto $\mathbb{P}(V_u)$.
A hyperplane $H_u$ of $\mathbb{P}(V_u)$ 
is called {\em generic relative to} a finite set ${\cal A}$ of flats in $\mathbb{P}(V_V)$  if it satisfies
the following property;
for every ${\cal A}_1, {\cal A}_2\subseteq {\cal A}$ 
with ${\rm proj}_u(\overline{{\cal A}_1}\cap \overline{{\cal A}_2})\neq \emptyset$ (where we allow ${\cal A}_1={\cal A}_2$),
\begin{equation}
\label{eq:v_generic0}
\rank ({\rm proj}_u(\overline{{\cal A}_1}\cap \overline{{\cal A}_2})\cap H_u) 
= \rank ({\rm proj}_u(\overline{{\cal A}_1}\cap \overline{{\cal A}_2}))-1.
\end{equation}
The next lemma shows the existence of generic hyperplanes.
\begin{lemma}
\label{lemma:existence}
Let $u\in V$ and ${\cal A}$ be a finite set of flats in $\mathbb{P}(V_V)$.
Suppose $Gr(d-1,W)\subseteq \mathbb{P}(V_u)$ (by identifying $V_u$ with $\bigwedge^{d-1} W$).
Then, for almost all points $[\br_u]\in Gr(d-1,W)$,
the hyperplane $H_u$ of  $\mathbb{P}(V_u)$ dual to $[\br_u]$ is generic relative to 
${\cal A}$.
\end{lemma}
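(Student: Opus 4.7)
The plan is to rewrite the genericity condition (\ref{eq:v_generic0}) as a non-containment statement about $H_u$ and then show that the set of bad $[\br_u]$ is a finite union of proper Zariski-closed subvarieties of the irreducible variety $Gr(d-1,W)$, whose complement is therefore dense.

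The first step will be the elementary observation that for any nonempty flat $S \subseteq \mathbb{P}(V_u)$ and any hyperplane $H_u$ of $\mathbb{P}(V_u)$, the intersection satisfies $\rank(S \cap H_u) \in \{\rank(S)-1, \rank(S)\}$, with the second value attained iff $S \subseteq H_u$. Applying this to $S := {\rm proj}_u(\overline{\mathcal{A}_1} \cap \overline{\mathcal{A}_2})$, condition (\ref{eq:v_generic0}) is equivalent to requiring $S \not\subseteq H_u$ for every pair $(\mathcal{A}_1,\mathcal{A}_2)$ under consideration. Writing $S = \mathbb{P}(W')$ with $W' \subseteq V_u$, and recalling that $H_u = \{[\bp]\in\mathbb{P}(V_u) : \bp \cdot \br_u = 0\}$, the containment $S \subseteq H_u$ translates to $\br_u \in (W')^\perp$, i.e.\ $[\br_u] \in \mathbb{P}((W')^\perp)$, where $(W')^\perp$ denotes the orthogonal complement inside $V_u$ with respect to the standard inner product.

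For the geometric heart of the argument, I would use that nonemptiness of $S$ forces $W' \neq \{0\}$, so $\mathbb{P}((W')^\perp)$ is a proper projective subspace of $\mathbb{P}(V_u) \cong \mathbb{P}(\bigwedge^{d-1} W)$. Since $Gr(d-1,W)$ is irreducible (as stated in the preliminaries on Plücker coordinates) and its linear span is the whole $\mathbb{P}(\bigwedge^{d-1} W)$ — the decomposable basis elements $e_{i_1}\wedge\cdots\wedge e_{i_{d-1}}$ already lie in the Plücker image — the intersection $Gr(d-1,W) \cap \mathbb{P}((W')^\perp)$ must be a proper Zariski-closed subvariety of $Gr(d-1,W)$ for each relevant pair.

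Finiteness of $\mathcal{A}$ then gives only finitely many such bad subvarieties, and their union remains a proper Zariski-closed subset of $Gr(d-1,W)$; its complement is the Zariski-dense set of $[\br_u]$ for which $H_u$ is generic relative to $\mathcal{A}$. I do not foresee a substantial obstacle — the only step that needs care is the reduction of the rank equality to a non-containment condition, while the rest is essentially the statement that a hyperplane dual to a sufficiently general Plücker point avoids any fixed proper linear subspace.
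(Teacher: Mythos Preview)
Your proof is correct and follows essentially the same approach as the paper: both reduce the rank condition~(\ref{eq:v_generic0}) to the requirement that $[\br_u]$ avoid the dual (orthogonal-complement) subspace of ${\rm proj}_u(\overline{{\cal A}_1}\cap\overline{{\cal A}_2})$, then use irreducibility of $Gr(d-1,W)$ to conclude that each bad locus is a proper subvariety and that the finite union of these has dense complement. Your version is slightly more explicit in justifying why the intersection with a proper linear subspace is a proper subvariety of the Grassmannian (via the fact that the decomposable vectors span $\bigwedge^{d-1}W$), whereas the paper simply asserts it.
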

\begin{proof}
Take any ${\cal A}_1, {\cal A}_2\subseteq {\cal A}$ with ${\rm proj}_u(\overline{{\cal A}_1}\cap \overline{{\cal A}_2})\neq \emptyset$,
and let us denote $A=\overline{{\cal A}_1}\cap \overline{{\cal A}_2}$ for simplicity.
It is clear that $\rank ({\rm proj}_u(A)\cap H_u)\geq\rank({\rm proj}_u(A)) -1$ for any $H_u$. 
Let us consider the ``$\leq$'' direction.
If ${\rm proj}_u(A)=\mathbb{P}(V_u)$, this relation clearly holds.
Otherwise ${\rm proj}_u(A)$ is a linear subspace of $\mathbb{P}(V_u)$, and hence
$\rank({\rm proj}_u(A)\cap H_u)\leq \rank({\rm proj}_u(A))-1$ holds 
if we take $[\br_u]\in Gr(d-1,W)$ so that $[\br_u]$ is not contained 
in the dual of ${\rm proj}_u(A)$ in $\mathbb{P}(V_u)$.
Since the intersection of the dual of  ${\rm proj}_u(A)$ with $Gr(d-1,W)$ 
is a lower dimensional subvariety of $Gr(d-1,W)$, almost all $[\br_u]$ satisfy this property. 

Since there are a finite number of possible $A=\overline{{\cal A}_1}\cap \overline{{\cal A}_2}$,
almost all hyperplanes $H_u$ of $\mathbb{P}(V_u)$ are indeed generic.
\end{proof}

We now define a {\em generic extension} of a rod-configuration $\br':R-u\rightarrow Gr(d-1,W)$ as follows:
a rod-configuration $\br:R\rightarrow Gr(d-1,W)$ is a generic extension of $\br'$ if
\begin{description}
\item[(Condition for extension):] $\br(v)=\br'(v)$ for $v\in V-u$; 
\item[(Condition for genericity):] $\br(u)$ satisfies the property that 
the dual hyperplane of $\br(u)$ in $\mathbb{P}(V_u)$ is generic relative to ${\cal A}(\br')$.
\end{description}
By Lemma~\ref{lemma:existence}, almost all extensions are generic.

Once we pick out a generic extension $\br$ of $\br'$, 
the unique hyperplane $H$ of $\mathbb{P}(V_V)$ is determined in such a way that
$H\cap \mathbb{P}(V_u)$ is the dual hyperplane of $\br(u)$ in $\mathbb{P}(V_u)$ 
and $\mathbb{P}(V_v)\subset H$ for all $v\in V-u$.
Such a unique hyperplane $H$ is called the {\em hyperplane associated with the generic extension}.

It is important to observe 
\begin{equation}
\label{eq:flat_restriction}
A_e(\br)=A_e(\br')\cap H \qquad \text{for every } e\in E.
\end{equation}
Also, if we define $\chi_u$ by 
\[
\chi_u(A)=\begin{cases} 1 & \text{if ${\rm proj}_u(A) \neq \emptyset$} \\
0 & \text{otherwise}
\end{cases} 
\]
for a flat $A\subset \mathbb{P}(V_V)$,
then we have the following from the genericity (\ref{eq:v_generic0}):
for every ${\cal A}_1, {\cal A}_2\subseteq {\cal A}(\br')$, 
\begin{equation}
\label{eq:v_generic1}
\rank ((\overline{{\cal A}_1}\cap \overline{{\cal A}_2})\cap H) = \rank (\overline{{\cal A}_1}\cap \overline{{\cal A}_2})
-\chi_u(\overline{{\cal A}_1}\cap \overline{{\cal A}_2}).
\end{equation}
Note that, setting ${\cal A}_1={\cal A}_2$, (\ref{eq:v_generic1}) implies, for every ${\cal A}_1\subseteq {\cal A}(\br')$
\begin{equation}
\label{eq:v_generic2}
\rank (\overline{{\cal A}_1}\cap H)=\rank(\overline{{\cal A}_1})-\chi_u(\overline{{\cal A}_1}).
\end{equation}
In particular, for any $A_e(\br')\in {\cal A}(\br')$, 
\begin{equation}
\label{eq:v_generic3}
\rank (A_e(\br')\cap H)=\rank(A_e(\br'))-\chi_u(A_e(\br')).
\end{equation}

By (\ref{eq:flat_restriction}), our goal is now to extend Theorem~\ref{theorem:truncation} 
to the case of our special hyperplane $H$.
Such an extension will be given in Lemma~\ref{lemma:last_truncation} 
by performing a truncation at a vertex $u$ shown in the following lemma (Lemma~\ref{lemma:combinatorial}).
Before that, we need an easy observation.  

\begin{lemma}
\label{lemma:flat_connected}
Let  $C$ be a $P$-connected set in ${\cal PM}(G,{\cal P})$ with $C\neq E$.
Then, for almost all rod-configurations $\br$ on $R$,
${\cal A}_C(\br)$ is connected.
\end{lemma}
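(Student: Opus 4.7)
The plan is to derive the lemma directly from the inductive hypothesis of Theorem~\ref{theorem:main2}. Since $C\neq E$ forces $|C|<|E|$, the triple $(|V(C)|,|R(C)|,|C|)$ is lexicographically strictly smaller than $(|V|,|R|,|E|)$, regardless of whether $V(C)=V$ or $R(C)=R$. Hence the induction hypothesis applies to the subgraph $G[V(C)]$ equipped with edge set $C$ and the inherited bipartition $\{B\cap V(C),\, R\cap V(C)\}$, yielding that for almost all rod-configurations on $R(C)$ the linear polymatroid ${\cal PM}({\cal A}_C(\br))$ coincides with ${\cal PM}_f(G[V(C)],\{B\cap V(C), R\cap V(C)\})$. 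Since ${\cal A}_C(\br)$ depends only on $\br|_{R(C)}$, this equality persists for almost all $\br$ on $R$.

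Next I would observe that $P$-connectivity is an intrinsic property of the edge subset. Because the function $f(F)=D(|V(F)|-1)-|R(F)|$ depends only on $F$, so does $\hat f(F)=\min\sum_i f(F_i)$ over partitions of $F$. Thus the polymatroid rank of any $F\subseteq C$ is the same whether $C$ is viewed inside $G$ or as the full edge set of $G[V(C)]$. In particular, $C$ is $P$-connected in ${\cal PM}_f(G,{\cal P})$ if and only if it is $P$-connected in ${\cal PM}_f(G[V(C)],\{B\cap V(C),R\cap V(C)\})$.

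Combining these two steps, $C$ is $P$-connected in the linear polymatroid ${\cal PM}({\cal A}_C(\br))$ for almost all $\br$. As explicitly noted in Subsection~\ref{sec:connectivity}, for a linear polymatroid the notion of $P$-connectivity coincides with the connectivity of the associated flat family in the sense defined in the introduction; indeed any disconnection of the flat family into two pieces realises an equality $\rank(\overline{{\cal A}_C(\br)})=\rank(\overline{{\cal A}_{C_1}(\br)})+\rank(\overline{{\cal A}_{C_2}(\br)})$, and any $P$-disconnection can be coarsened to a two-part one. This yields connectivity of ${\cal A}_C(\br)$, as required.

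I do not anticipate a substantive obstacle: the argument is essentially a bookkeeping reconciliation of three notions (combinatorial $P$-connectivity on $G$, combinatorial $P$-connectivity on $G[V(C)]$, and flat-family connectivity of ${\cal A}_C(\br)$), bridged by the inductive hypothesis. The only point demanding mild care is the lexicographic decrease, which is why the assumption $C\neq E$ is invoked.
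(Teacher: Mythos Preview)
Your proposal is correct and follows essentially the same route as the paper: restrict to $G'=(V(C),C)$ with the inherited bipartition, invoke the inductive hypothesis of Theorem~\ref{theorem:main2} (available because $|C|<|E|$ forces a strict lexicographic drop) to identify ${\cal PM}({\cal A}_C(\br))$ with ${\cal PM}_f(G',{\cal P}')$, and then transfer $P$-connectivity of $C$ to connectivity of ${\cal A}_C(\br)$. Your write-up is somewhat more explicit than the paper's in justifying why $P$-connectivity is intrinsic to $C$ and why it coincides with flat-family connectivity, but the argument is the same.
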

\begin{proof}
Let us consider the restriction to $C$, i.e., consider $G'=(V(C),C)$, ${\cal P}'=\{B\cap V(C), R\cap V(C)\}$.
Note that $|V(C)|\leq |V|, |R\cap V(C)|\leq |R|$ and $|C|<|E|$ by $C\subseteq E-e$.
Hence, by induction, the linear polymatroid ${\cal PM}({\cal A}_C(\br))$ is equal to ${\cal PM}_f(G',{\cal P}')$ for almost all rod-configurations $\br$ on $R$.
Since $C$ is $P$-connected in ${\cal PM}_f(G',{\cal P}')$, ${\cal A}_C(\br)$ is connected. 
\end{proof}

\begin{lemma}
\label{lemma:combinatorial}
There exists a vertex $u$ satisfying one of the following two properties:
For almost all rod-configurations $\br'$ on $R-u$ and 
almost all extension $\br$,
\begin{description}
\item[(A)] $G$ has an edge subset $C$ 
with $\delta_G(u)\subset C\subsetneq E$ such that ${\cal A}_C(\br)$ is connected; or 
\item[(B)] $G$ has disjoint edge subsets $C$ and $C'$ with $\delta_G(u)\subset C\cup C'$ such that 
both ${\cal A}_C(\br)$ and ${\cal A}_{C'}(\br)$ are connected. 
Furthermore, if ${\cal A}(\br')$ is connected, then  ${\rm proj}_u(\overline{{\cal A}_C(\br')}\cap \overline{{\cal A}_{E\setminus C}(\br')})\neq \emptyset$.
\end{description}
\end{lemma}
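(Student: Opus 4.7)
My plan is to apply Lemma~\ref{lemma:degree} to the combinatorial polymatroid ${\cal PM}_f(G,{\cal P})$ with the given bipartition ${\cal P}=\{\emptyset,V\}$ (appropriate to the current case $B=\emptyset$), and to read off $u$, $C$, and $C'$ from the $P$-connected components it delivers. Since $d\geq 3$ yields $D\geq 6$, the hypothesis of Lemma~\ref{lemma:degree} is met, giving either (i) three vertices each spanned by exactly two $P$-connected components of ${\cal PM}_f(G,{\cal P})$ or (ii) a vertex spanned by a single $P$-connected component.

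If alternative (ii) supplies $u$ whose unique spanning component $C$ is a proper subset of $E$, I take this $C$ to witness property (A): the inclusion $\delta_G(u)\subseteq C\subsetneq E$ is immediate, and Lemma~\ref{lemma:flat_connected} gives that ${\cal A}_C(\br)$ is connected for almost all rod-configurations $\br$. For alternative (i), I pick any of the three distinguished $u$'s, spanned by components $C_1,C_2$. These are disjoint (distinct components), proper subsets of $E$ (at least two components exist), and both $P$-connected in ${\cal P}$. I set $C:=C_1$ and $C':=C_2$: this covers $\delta_G(u)$, is disjoint, and Lemma~\ref{lemma:flat_connected} delivers the connectedness of both ${\cal A}_C(\br)$ and ${\cal A}_{C'}(\br)$.

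For the projection condition I assume ${\cal A}(\br')$ is connected; by the inductive hypothesis of Theorem~\ref{theorem:main2} applied under the bipartition ${\cal P}_u$, this is equivalent to $E$ being $P$-connected in ${\cal PM}_f(G,{\cal P}_u)$. A short submodular calculation using $f_{{\cal P}_u}(F)=f_{{\cal P}}(F)+[u\in V(F)]$ transfers $P$-connectedness of $C_1$ and $C_2$ from ${\cal P}$ to ${\cal P}_u$, so I may apply Lemma~\ref{lemma:stronger} inside ${\cal PM}_f(G,{\cal P}_u)$ to the pair $C_1,C_2$. This produces a $P$-connected (in ${\cal P}_u$) set $C^*$ with $C_1\subseteq C^*\subseteq E\setminus C_2$ and an edge $uw\in K(V)$ lying in ${\rm cl}(C^*)\cap {\rm cl}(E\setminus C^*)$ of ${\cal PM}_f(K(V),{\cal P}_u)$. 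Invoking the inductive identification ${\cal PM}({\cal A}_{K(V)}(\br'))={\cal PM}_f(K(V),{\cal P}_u)$, the closure statement upgrades to $A_{uw}(\br')\subseteq \overline{{\cal A}_{C^*}(\br')}\cap \overline{{\cal A}_{E\setminus C^*}(\br')}$, and since $u$ is an endpoint of $uw$ the flat $A_{uw}(\br')$ projects nontrivially onto $V_u$, giving the desired projection condition for $C^*$. I then rename: take $C:=C^*$ and keep $C':=C_2$, which remains disjoint from $C^*\subseteq E\setminus C_2$ and still satisfies $C^*\cup C_2\supseteq C_1\cup C_2\supseteq \delta_G(u)$.

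The main obstacle I foresee is the last reconciliation step: after replacing $C=C_1$ with $C=C^*$, I need ${\cal A}_{C^*}(\br)$ to be connected, and Lemma~\ref{lemma:flat_connected} requires $P$-connectedness in ${\cal P}$, whereas Lemma~\ref{lemma:stronger} only delivers it in ${\cal P}_u$. I plan to handle this either by choosing $C^*$ inclusion-minimal with its defining properties (so the submodular comparison between $f_{\cal P}$ and $f_{{\cal P}_u}$ can be run in reverse) or, preferably, by showing that the witness edge $uw$ can always be chosen with $w\in V(C_1)$, in which case $A_{uw}(\br')$ already lies in $\overline{{\cal A}_{C_1}(\br')}$ via ${\rm cl}_{\cal P}(C_1)$ and the projection condition holds for the original choice $C=C_1$, eliminating the substitution entirely. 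A residual degenerate case appears when $E$ is itself $P$-connected in ${\cal P}$ (so alternative (ii) only yields $C=E$, which fails (A)); this is treated by choosing two disjoint $P$-connected subsets of $\delta_G(u)$ as $C$ and $C'$ and rerunning the projection argument on the resulting pair.
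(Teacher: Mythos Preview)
Your overall architecture mirrors the paper's, but you are missing the one device that makes the argument go through cleanly: the paper applies Lemma~\ref{lemma:degree} not to $G$ but to $G-e$ for an arbitrarily fixed edge $e\in E$. This single move eliminates your ``residual degenerate case'' and also explains why Lemma~\ref{lemma:degree} insists on \emph{three} vertices in alternative~(i). Concretely: working in $G-e$ forces $C,C'\subseteq E-e\subsetneq E$, so Lemma~\ref{lemma:flat_connected} (which needs $C\neq E$ for the induction) applies with no extra case analysis; and among the three vertices produced in case~(i) one can always avoid the two endpoints of $e$, so that $\delta_G(u)=\delta_{G-e}(u)\subseteq C\cup C'$. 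In case~(ii), if the vertex $u$ happens to be an endpoint of $e$, the paper simply sets $C'=\{e\}$, a trivially $P$-connected singleton.

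Your proposed patch for the degenerate case does not work. If $E$ is $P$-connected in ${\cal PM}_f(G,{\cal P})$, you suggest covering $\delta_G(u)$ by two disjoint $P$-connected subsets; but for $B=\emptyset$ any two edges $e_1=uv_1$, $e_2=uv_2$ of $\delta_G(u)$ satisfy
\[
f(\{e_1\})+f(\{e_2\})=2(D-2)<2D-3=f(\{e_1,e_2\}),
\]
so every nontrivial subset of $\delta_G(u)$ is $P$-disconnected. Hence the only $P$-connected pieces of $\delta_G(u)$ are singletons, and two of them cannot cover $\delta_G(u)$ once $\deg(u)\geq 3$. The $G-e$ trick is precisely how the paper sidesteps this.

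On the substitution $C\to C^*$: the paper does the same thing you do (its ``we may assume'' after invoking Lemma~\ref{lemma:stronger} is exactly the replacement of $C$ by the enlarged set produced there), so your plan here is aligned with the paper rather than divergent from it. Your observation that Lemma~\ref{lemma:stronger} only guarantees $P$-connectedness of $C^*$ in ${\cal P}_u$ is a fair reading; neither of your two suggested fixes (taking $C^*$ minimal, or forcing $w\in V(C_1)$) is justified as stated, and the paper is not more explicit on this point.
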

\begin{proof}
Take any edge $e\in E$, and consider $G-e$.
By Lemma~\ref{lemma:degree},  $G-e$ has 
(i) three vertices each of which is spanned by two $P$-connected components of ${\cal PM}(G-e,{\cal P})$, or
(ii) a vertex spanned by exactly one $P$-connected component of ${\cal PM}(G-e,{\cal P})$.
Since any $P$-connected set of ${\cal PM}(G-e,{\cal P})$ is also $P$-connected in ${\cal PM}(G,{\cal P})$, these $P$-connected components are $P$-connected in ${\cal PM}(G,{\cal P})$.

We define $C,C'\subseteq E-e$ as follows:
If (i) occurs, then take a vertex $u$ 
that is not an endpoint of $e$ and is spanned by two $P$-connected components in ${\cal PM}(G-e,{\cal P})$. 
Let $C$ and $C'$ be such components.
If (ii) occurs, then we have a vertex $u$ spanned by exactly one $P$-connected component in ${\cal PM}(G-e,{\cal P})$.
Let $C$ be that component. 
Furthermore, if $u$ is an endpoint of $e$, let $C'=\{e\}$. 

Consequently, one of the followings holds: (i') $C$ is $P$-connected set with $\delta_G(u)\subset C\subsetneq E$ or 
(ii') $C$ and $C'$ are disjoint $P$-connected sets (that may be trivial) with $\delta_G(u)\subset C\cup C'$.
Note that, both $C$ and $C'$ are proper subsets of $E$, and thus Lemma~\ref{lemma:flat_connected} implies that  ${\cal A}_C(\br)$ and ${\cal A}_{C'}(\br)$ are connected 
for almost all rod-configurations $\br$.

The remaining thing is to prove the last property of (B) when (ii') occurs.
Recall ${\cal P}_u=\{B+u,R-u\}$, 
and the linear polymatroid ${\cal PM}(\{A_e(r'):e\in K(V)\})$ 
is equal to ${\cal PM}_f(K(V),{\cal P}_u)$ by induction on the lexicographical order of $(|V|,|R|,|E|)$.
Since ${\cal A}(\br')$ is connected, $E$ is $P$-connected in ${\cal PM}_f(G,{\cal P}_u)$.
Thus, applying Lemma~\ref{lemma:stronger}, we may assume that 
there is a vertex $v\in V-u$ with $uv\in {\rm cl}(C)\cap {\rm cl}(E\setminus C)$ 
for the closure operator of ${\cal PM}_f(G,{\cal P}_u)$.
This implies $A_{uv}(\br')\subset \overline{{\cal A}_C(\br')}\cap \overline{{\cal A}_{E\setminus C}(\br')})$,
and thus ${\rm proj}_u(\overline{{\cal A}_C(\br')}\cap \overline{{\cal A}_{E\setminus C}(\br')})\neq \emptyset$.
\end{proof}

We are now ready to extend Theorem~\ref{theorem:truncation} to our nongeneric hyperplane.
Recall that, for a family ${\cal A}$ of flats and a hyperplane $H$, we abbreviate
$\{A\cap H :  A\in {\cal A}\}$ as ${\cal A}\cap H$.
Note that $(\overline{{\cal A}})\cap H$ implies $\overline{\{A :  A\in {\cal A}\}}\cap H$, 
which may not be equal to $\overline{{\cal A}\cap H}=\overline{\{A\cap H :  A\in {\cal A}\}}$.

\begin{lemma}
\label{lemma:last_truncation}
Let $u$ be a vertex shown in Lemma~\ref{lemma:combinatorial} and $\br'$
be a generic  rod-configuration on $R-u$.
Then, for the hyperplane $H$ of $\mathbb{P}(V_V)$ associated with a generic extension of $\br'$, 
\begin{equation}
\label{eq:v_truncation}
\rank(\overline{{\cal A}(\br')\cap H}) = 
\min\{\mbox{$\sum_{i=1}^k$} (\rank(\overline{{\cal A}_{E_i}(\br')})-\chi_u(\overline{{\cal A}_{E_i}(\br')}))\},
\end{equation}
where the minimum is taken over all partitions 
$\{E_1,\dots, E_k\}$ of $E$ into nonempty subsets. 
\end{lemma}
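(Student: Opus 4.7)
The plan is to handle the two inequalities separately. The easy direction $(\leq)$ follows from subadditivity of rank: for any partition $\{E_1,\dots,E_k\}$ of $E$,
\[
\rank\bigl(\overline{{\cal A}(\br')\cap H}\bigr) \;\leq\; \sum_{i=1}^{k}\rank\bigl(\overline{{\cal A}_{E_i}(\br')\cap H}\bigr) \;\leq\; \sum_{i=1}^{k}\rank\bigl(\overline{{\cal A}_{E_i}(\br')}\cap H\bigr),
\]
and the last sum equals $\sum_i (\rank(\overline{{\cal A}_{E_i}(\br')})-\chi_u(\overline{{\cal A}_{E_i}(\br')}))$ by the genericity identity (\ref{eq:v_generic2}).

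For the converse $(\geq)$, the plan is to adapt Lov\'asz's proof of Theorem~\ref{theorem:truncation} to the non-generic hyperplane $H$, inducting on $|E|$. The base case $|E|=1$ is exactly (\ref{eq:v_generic3}). For the inductive step, choose the vertex $u$ guaranteed by Lemma~\ref{lemma:combinatorial} and split into its two cases. In case (A), the subset $C$ with $\delta_G(u)\subset C\subsetneq E$ and ${\cal A}_C(\br')$ connected forces every $A_e(\br')$ with $e\in E\setminus C$ to be disjoint from the $u$-coordinate block and hence contained in $H$; thus $\chi_u(\overline{{\cal A}_{E\setminus C}(\br')})=0$ and $\overline{{\cal A}_{E\setminus C}(\br')\cap H}=\overline{{\cal A}_{E\setminus C}(\br')}$. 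Applying the inductive hypothesis to $C$ (a strictly smaller edge set) and combining the two pieces via the standard identity $\rank(X\vee Y)=\rank(X)+\rank(Y)-\rank(X\cap Y)$ exhibits a partition of $E$ whose right-hand side in (\ref{eq:v_truncation}) is attained.

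Case (B) is the heart of the argument. Here disjoint $C,C'$ with $\delta_G(u)\subset C\cup C'$ and both flat families connected are provided; any remaining edge of $E\setminus(C\cup C')$ again avoids $u$ and lies in $H$, so its contribution is handled as in case (A). I apply the inductive hypothesis to $C$ and to $C'$, and then the crucial calculation is the rank of $\overline{{\cal A}_C(\br')\cap H}\cap\overline{{\cal A}_{C'}(\br')\cap H}$. The plan is to identify this intersection with $(\overline{{\cal A}_C(\br')}\cap\overline{{\cal A}_{C'}(\br')})\cap H$ (using connectivity of each of ${\cal A}_C(\br')$ and ${\cal A}_{C'}(\br')$) and then apply the genericity condition (\ref{eq:v_generic1}) to this intersection to read off the rank drop. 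When ${\cal A}(\br')$ is connected, the extra conclusion of Lemma~\ref{lemma:combinatorial}(B) that ${\rm proj}_u(\overline{{\cal A}_C(\br')}\cap\overline{{\cal A}_{E\setminus C}(\br')})\neq\emptyset$ ensures $\chi_u$ of this intersection equals $1$, delivering exactly the unit drop needed so that the combined count matches a partition of the form $\{C,C',\text{singletons of } E\setminus(C\cup C')\}$ (possibly merged).

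The main obstacle I anticipate is this final step of case (B): verifying the identification $\overline{{\cal A}_C(\br')\cap H}\cap\overline{{\cal A}_{C'}(\br')\cap H}=(\overline{{\cal A}_C(\br')}\cap \overline{{\cal A}_{C'}(\br')})\cap H$ and confirming that the rank drop produced by (\ref{eq:v_generic1}) on this flat is precisely the $\chi_u$ term appearing in (\ref{eq:v_truncation}). This is exactly where Lemma~\ref{lemma:combinatorial}'s carefully chosen vertex $u$ and projection-nonemptiness clause do the real work, compensating for the fact that $H$ fails to be Dilworth-generic in the sense of Theorem~\ref{theorem:truncation}.
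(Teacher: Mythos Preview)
Your overall architecture is the same as the paper's: prove $(\leq)$ by subadditivity plus the genericity drop~(\ref{eq:v_generic2}), and for $(\geq)$ reduce (by induction on $|E|$) to the connected case and use Lemma~\ref{lemma:combinatorial} together with modularity of $\rank$. Case~(A) goes through essentially as you describe. The trouble is in case~(B), and it is exactly at the point you flag as the ``main obstacle''---but the obstacle is not the one you think.

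In case~(B) you propose to split $E$ into $C$, $C'$, and singletons of $E\setminus(C\cup C')$, and to compute the intersection $(\overline{{\cal A}_C(\br')\cap H})\cap(\overline{{\cal A}_{C'}(\br')\cap H})$. There are two issues. First, Lemma~\ref{lemma:combinatorial}(B) gives you ${\rm proj}_u(\overline{{\cal A}_C(\br')}\cap\overline{{\cal A}_{E\setminus C}(\br')})\neq\emptyset$, \emph{not} the corresponding statement for $C$ versus $C'$; you invoke the former but need it for the latter, and there is no reason $\chi_u(\overline{{\cal A}_C(\br')}\cap\overline{{\cal A}_{C'}(\br')})$ should equal~$1$. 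Second, a three-piece split still forces you through a two-step modularity, and the decisive identity you have not isolated is
\[
\overline{{\cal A}_{E\setminus C}(\br')\cap H}\;=\;\overline{{\cal A}_{E\setminus C}(\br')}\cap H.
\]
This does \emph{not} follow from genericity alone (the hyperplane $H$ is not Dilworth-generic), and it is precisely where $C'$ earns its keep. The paper establishes it by choosing $e\in\delta_G(u)\cap C'$ and a point $x\in A_e(\br')\setminus H$, and using the connectivity of ${\cal A}_{C'}(\br')\cap H$ to show $\overline{{\cal A}_{C'}(\br')}=\overline{({\cal A}_{C'}(\br')\cap H)\cup\{x\}}$; together with ${\cal A}_{E\setminus(C\cup C')}(\br')\subset H$ this yields the displayed identity. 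Only then do the two identifications $\overline{{\cal A}_C(\br')\cap H}=\overline{{\cal A}_C(\br')}\cap H$ and the displayed one allow you to rewrite the modularity intersection as $(\overline{{\cal A}_C(\br')}\cap\overline{{\cal A}_{E\setminus C}(\br')})\cap H$ and apply~(\ref{eq:v_generic1}) together with the projection clause of Lemma~\ref{lemma:combinatorial}(B). So the correct bipartition is $\{C,\,E\setminus C\}$, with $C'$ used internally to control $E\setminus C$; your $\{C,C',\text{singletons}\}$ plan does not line up with the information Lemma~\ref{lemma:combinatorial} actually supplies.
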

\begin{proof}
For simplicity, we abbreviate  $A_e(\br')$ as $A_e'$ and  ${\cal A}(\br')$ as  ${\cal A}'$, respectively.
Consider the connected component decomposition of ${\cal A}'\cap H$ 
(that is, the $P$-connected component decomposition of the linear polymatroid ${\cal PM}({\cal A}'\cap H)$). 
To see the equality of (\ref{eq:v_truncation}), 
we show (\ref{eq:v_truncation}) for each connected component of ${\cal A}'\cap H$
Thus, by induction, we may assume ${\cal A}'\cap H$ is connected and it is sufficient to show
\begin{equation}
\label{eq:main_claim}
\rank(\overline{{\cal A}'\cap H})=\rank(\overline{{\cal A}'})-1.
\end{equation}

From the choice of $u$, (A) or (B) of Lemma~\ref{lemma:combinatorial} holds.
Let $C$ and $C'$ be subsets of $E$ satisfying properties  of Lemma~\ref{lemma:combinatorial},
where $C'=\emptyset$ if (A) holds (otherwise we may assume $C'\neq \emptyset$).
Namely, if $C'=\emptyset$, ${\cal A}_{C}'\cap H$  is connected with $\delta_G(u)\subseteq C$.
(Note that, in the current situation,  
${\cal A}_{C}'\cap H$ corresponds to ${\cal A}_C(\br)$ of the statement of Lemma~\ref{lemma:combinatorial}).)
If $C'\neq \emptyset$, 
${\cal A}_{C}'\cap H$ and ${\cal A}_{C'}'\cap H$ are connected with $\delta_G(u)\subseteq C\cup C'$. 
We may further assume $\delta_G(u)\cap C\neq \emptyset$ and 
 $\delta_G(u)\cap C'\neq \emptyset$, since otherwise we have the former case. 

We now calculate the rank of 
$\overline{{\cal A}_{C}'\cap H}$, $\overline{{\cal A}_{C'}'\cap H}$, and $\overline{{\cal A}_{E\setminus C}'\cap H}$.
The connectivity of ${\cal A}_{C}'\cap H$ and $\delta_G(u)\cap C$ imply 
\begin{equation}
\label{eq:last1}
\rank(\overline{{\cal A}_{C}'\cap H})=\rank(\overline{{\cal A}_C'})-1
\end{equation}
by induction. Similarly, if $C'\neq \emptyset$, 
the connectivity of ${\cal A}_{C'}'\cap H$ and $\delta_G(u)\cap C'$ imply 
\begin{equation}
\label{eq:last2}
\rank(\overline{{\cal A}_{C'}'\cap H})=\rank(\overline{{\cal A}_{C'}'})-1.
\end{equation}
Also, since all flats of ${\cal A}_{E\setminus (C\cup C')}'$ are contained in $H$
by $\delta_G(u)\subset C\cup C'$, we have
\begin{equation}
\label{eq:last3}
{\cal A}_{E\setminus (C\cup C')}'\cap H={\cal A}_{E\setminus (C\cup C')}'.
\end{equation}

Suppose $C'\neq \emptyset$,
and let us take an edge $e\in \delta_G(u)\cap C'$ and a point 
$x\in A_e'\setminus H$. (Note that, by (\ref{eq:v_generic3}), $A_e'\setminus H\neq \emptyset$.)
Then, clearly 
$\rank(\overline{\overline{({\cal A}_{C'}'\cap H)}\cup \{x\}})=\rank(\overline{{\cal A}_{C'}'\cap H})+1$.
Combined with  (\ref{eq:last2}), we have
\begin{equation}
\label{eq:last4}
\overline{{\cal A}_{C'}'}=\overline{\overline{({\cal A}_{C'}'\cap H)}\cup \{x\}}.
\end{equation}
By (\ref{eq:last3}) and (\ref{eq:last4}),
\begin{align}
\overline{{\cal A}_{E\setminus C}'}\cap H
&=\overline{{\cal A}_{E\setminus (C\cup C')}'\cup {\cal A}_{C'}'} \cap H  \nonumber \\ 
&=\overline{(\overline {{\cal A}_{E\setminus (C\cup C')}'\cap H}) 
\cup (\overline{{\cal A}_{C'}'\cap H})\cup \{x\}}\cap H \nonumber \\
&=\overline{(\overline{{\cal A}_{E\setminus (C\cup C')}'\cap H})\cup (\overline{{\cal A}_{C'}'\cap H})} 
=\overline{{\cal A}_{E\setminus C}'\cap H}. \label{eq:last43}
\end{align}
Thus, applying (\ref{eq:v_generic2}), we obtain
\begin{equation}
\label{eq:last42}
\rank(\overline{{\cal A}_{E\setminus C}'\cap H})
=\rank(\overline{{\cal A}_{E\setminus C}'}\cap H)=\rank(\overline{{\cal A}_{E\setminus C}'})-1
\end{equation}
if $C'\neq \emptyset$. In total, combining (\ref{eq:last3}) and (\ref{eq:last42}), 
\begin{equation}
\label{eq:last23}
\rank(\overline{{\cal A}_{E\setminus C}'\cap H})=
\begin{cases}
\rank(\overline{{\cal A}_{E\setminus C}'}) -1 & \text{ (if  $C'\neq \emptyset$)} \\
\rank(\overline{{\cal A}_{E\setminus C}'})  & \text{ (if  $C'=\emptyset$)}. 
\end{cases}
\end{equation}

We then compute the rank of $(\overline{{\cal A}_C'\cap H})\cap (\overline{{\cal A}_{E\setminus C}'\cap H})$.
Since $\rank((\overline{{\cal A}_{C}'})\cap H)=\rank(\overline{{\cal A}_{C}'})-1$ by (\ref{eq:v_generic2}),
comparing this relation with (\ref{eq:last1}), we have 
\begin{equation}
\label{eq:last5}
\overline{{\cal A}_C'}\cap H=\overline{{\cal A}_C'\cap H}.
\end{equation}
%Symmetrically, if $C'\neq \emptyset$, we have 
%\begin{equation}
%\label{eq:last6}
%\overline{{\cal A}_{C'}'}\cap H=\overline{{\cal A}_{C'}'\cap H}.
%\end{equation}
By (\ref{eq:last3}), (\ref{eq:last43}) and (\ref{eq:last5}), 
we obtain
\begin{equation}
\label{eq:last7}
(\overline{{\cal A}_C'\cap H})\cap (\overline{{\cal A}_{E\setminus C}'\cap H})
=(\overline{{\cal A}_C'}) \cap (\overline{{\cal A}_{E\setminus C}'}) \cap H. 
\end{equation}
Therefore, applying (\ref{eq:last7}) and then (\ref{eq:v_generic1}), we obtain
\begin{align}
\nonumber
\rank((\overline{{\cal A}_C'\cap H})\cap (\overline{{\cal A}_{E\setminus C}'\cap H}))
&=\rank((\overline{{\cal A}_C'}) \cap (\overline{{\cal A}_{E\setminus C}'}) \cap H)  \\
&=\rank(\overline{{\cal A}_C'} \cap \overline{{\cal A}_{E\setminus C}'} )
-\chi_u(\overline{{\cal A}_C'} \cap \overline{{\cal A}_{E\setminus C}'}).  \label{eq:last8}
\end{align}
We show that 
$\chi_u(\overline{{\cal A}_C'} \cap \overline{{\cal A}_{E\setminus C}'})$ takes distinct values 
depending on whether $C'=\emptyset$ or not.
If $C'=\emptyset$, then no edge in $E\setminus C$ is incident to $u$ by $\delta_G(u)\subseteq C$,
and ${\rm proj}_u (\overline{{\cal A}_{E\setminus C}'})=\emptyset$. 
Thus, $\chi_u(\overline{{\cal A}_C'} \cap \overline{{\cal A}_{E\setminus C}'})=0$.
On the other hand, if $C'\neq \emptyset$, then 
property (B) of Lemma~\ref{lemma:combinatorial} 
implies ${\rm proj}_u(\overline{{\cal A}_C'}\cap \overline{{\cal A}_{E\setminus C}})\neq \emptyset$
since ${\cal A}'$ is connected from the connectivity of ${\cal A}'\cap H$.
Therefore, $\chi_u(\overline{{\cal A}_C'} \cap \overline{{\cal A}_{E\setminus C}'})=1$ if $C'\neq \emptyset$.
In total, (\ref{eq:last8}) can be rewritten by
\begin{equation}
\label{eq:last9}
\rank((\overline{{\cal A}_C'\cap H})\cap (\overline{{\cal A}_{E\setminus C}'\cap H}))
=
\begin{cases}
\rank(\overline{{\cal A}_C'} \cap \overline{{\cal A}_{E\setminus C}'} )-1 & \text{ (if $C'\neq \emptyset$)} \\
\rank(\overline{{\cal A}_C'} \cap \overline{{\cal A}_{E\setminus C}'} ) & \text{ (if $C'=\emptyset$)}.
\end{cases}
\end{equation}

By (\ref{eq:last1}), (\ref{eq:last23}), (\ref{eq:last9}), and the modularity of $\rank(\cdot)$, 
\begin{align*}
\rank(\overline{{\cal A}'\cap H})&=\rank(\overline{(\overline{{\cal A}_{C}'\cap H})\cup (\overline{{\cal A}_{E\setminus C}'\cap H})})\\
&=\rank(\overline{{\cal A}_{C}'\cap H})+\rank(\overline{{\cal A}_{E\setminus C}'\cap H})-\rank((\overline{{\cal A}_C'\cap H})\cap (\overline{{\cal A}_{E\setminus C}'\cap H})) \\
&=\rank(\overline{{\cal A}_{C}'})+\rank(\overline{{\cal A}_{E\setminus C}'})-\rank(\overline{{\cal A}_C'}\cap \overline{{\cal A}_{E\setminus C}'})-1 \\
&=\rank(\overline{{\cal A}'})-1,
\end{align*}
implying (\ref{eq:main_claim}).
This completes the proof of the lemma.
\end{proof}

For $F\subseteq E$, let
\[
\chi_u(F)=\begin{cases} 1 & \text{if $u\in V(F)$} \\
0 & \text{otherwise},
\end{cases} 
\]
where $u$ is a vertex shown in Lemma~\ref{lemma:combinatorial}.
Then, Lemma~\ref{lemma:last_truncation} implies,
for almost all bar-configurations $\br'$ on $R-u$ and its generic extension $\br$,
\begin{equation}
\label{eq:final1}
\rank(\overline{{\cal A}(\br)})=\mbox{$\min\{\sum_i(\rank(\overline{{\cal A}_{E_i}(\br')})-\chi_u(E_i)) :$  a partition $\{E_1,\dots, E_k\}$ of $E\}$}. 
\end{equation}
Let $R'=R-u$.
The induction hypothesis on $|R|$ implies 
\begin{equation}
\label{eq:final2}
\rank(\overline{{\cal A}_{E_i}(\br')})=\mbox{$\min\{\sum_j(D|V(E_{i,j})|-D-|R'(E_{i,j})|) :$  a partition $\{E_{i,1},\dots, E_{i,k'}\}$ of $E_i\}$}
\end{equation}
for each $E_i\subseteq E$.
Since $\chi_u(E_i)\leq \sum_j\chi_u(E_{i,j})$ for any $E_i\subseteq E$ and any partition $\{E_{i,1},\dots, E_{i,k'}\}$ of $E_i$,
(\ref{eq:final1}) and (\ref{eq:final2}) imply
\begin{equation}
\label{eq:final3}
\rank(\overline{{\cal A}(\br)})\geq \mbox{$\min\{\sum_{i}(D|V(E_{i})|-D-|R'(E_{i})|-\chi_u(E_{i}))  :$  a partition $\{E_1,\dots, E_k\}$ of $E\}$}. 
\end{equation}
%&\geq \mbox{$\min\{\sum_i(D|V(E_{i})|-D-|R'(E_{i})|-\delta_u(E_i)):$ a partition $\{E_1,\dots, E_k\}$ of $E\}$} \\
%&=\mbox{$\min\{\sum_i (D|V(E_{i})|-D-|R(E_{i})|)):$ a partition $\{E_1,\dots, E_k\}$ of $E\}$},
%\end{align*}
Note that, for any $F\subseteq E$, we have $|R(F)|=|R'(F)|+\chi_u(F)$.
Thus, (\ref{eq:final3}) implies ``$\geq$'' direction of (\ref{eq:main2}) for case $B=\emptyset$.
This completes the proof of Theorem~\ref{theorem:main2}.
\end{proof}

\section{Identified Body-hinge Frameworks}
\label{sec:body_hinge}
An {\em identified body-hinge framework} (simply called a body-hinge framework) is a
structure consisting of rigid bodies connected by hinges (that is, $(d-2)$-dimensional flats).
A hinge allows to connect any number of bodies.
A body-hinge framework is formally defined as a pair $(G,\bh)$, where
\begin{itemize}
\item $G=(B,H;E)$ is a bipartite graph with vertex classes $B$ and $H$, representing bodies and hinges, respectively;
\item $\bh:H \rightarrow Gr(d-1,W)$ is a hinge-configuration.
\end{itemize}
Note that each $v_1\in B$ and $v_2\in H$ correspond to a body and a hinge, respectively, and $e\in E$ indicates their incidence.
%Although we can introduce external constraints as above, we would like to concentrate on those without external constraints for simplicity;
%the goal of this section is to present a simpler proof of Tay's theorem.
%(But we will make use of external constraints in the proof.)

A motion of $(G,\bh)$ is defined as a mapping $\bmm:B\rightarrow \bigwedge^{d-1} W$ 
such that $\bmm(u)-\bmm(v)$ is contained in 
$\bh(w)$ for any neighbors $u,v\in B$ of $w\in H$.
A motion $\bmm$ is called {\em trivial} if $\bmm(v)$'s are equal for all $v\in B$.
$(G,\bh)$ is said to be {\em infinitesimally rigid} if every motion is trivial.

For a bipartite graph $G=(B,H;E)$, 
the graph obtained from $G$ by duplicating each edge by $(D-1)$ parallel copies is denoted by $(D-1)\circ G$, 
and $(D-1)\circ E$ denotes the edge set of $(D-1)\circ G$.
Tay showed a combinatorial characterization of identified body-hinge frameworks
by converting to rod-bar frameworks.
Below, we give a more natural proof.
\begin{corollary}[Tay~\cite{tay:89}]
\label{coro:body_hinge}
Let $G=(B,H;E)$ be a bipartite graph. Then, there exists a hinge-configuration $\bh$
such that $(G,\bh)$ is infinitesimally rigid if and only if 
$(D-1)\circ G$ contains an edge subset $I\subseteq (D-1)\circ E$ satisfying the following counting conditions:
\begin{itemize}
\item $|I|=D|B|+(D-1)|H|-D$;
\item $|F|\leq D|B(F)|+(D-1)|H(F)|-D$ for each nonempty $F\subseteq I$.
\end{itemize}
\end{corollary}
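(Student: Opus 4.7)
The plan is to reduce the identified body-hinge problem to the body-rod-bar problem already characterized in Corollary~\ref{cor:body_rod_bar}. The natural translation treats each hinge $h\in H$ as a rod with Pl\"ucker coordinate $\br(h)=\bh(h)$ (both are points of $Gr(d-1,W)$), and each body-hinge incidence $\{b,h\}\in E$ as $D-1$ parallel bars between $b$ and $h$, each chosen generically from $Gr(2,W)\cap H_{\br}(h)$ to satisfy the incidence condition (\ref{eq:incidence}). This produces a body-rod-bar framework $(G',\bq,\br)$ whose underlying graph is $G'=(D-1)\circ G$ with bipartition $\{B,H\}$.

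The first step is to verify that $(G,\bh)$ is infinitesimally rigid for generic $\bh$ if and only if the associated body-rod-bar framework $(G',\bq,\br)$ is infinitesimally rigid for generic $\bq$. I would argue this via an explicit correspondence on motions: every body-rod-bar motion $\bmm\colon B\cup H\to\bigwedge^{d-1}W$ restricts to a body-hinge motion $\bmm|_B$, and conversely every body-hinge motion extends to a body-rod-bar motion by an appropriate choice of rod motions $\bmm(h)$. The decisive calculation is that $D-1$ generic bars incident to $\br_h$ span the hyperplane $\br_h^{\perp}\subseteq\bigwedge^2 W$, so the combined bar constraints force $\bmm(b)-\bmm(h)\in\langle\br_h\rangle$; differencing over bodies incident to a common hinge then reproduces exactly the body-hinge constraint $\bmm(u)-\bmm(v)\in\bh(h)$. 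The kernel of the restriction map is precisely $|H|$-dimensional (one per-rod rotation per hinge), which is consistent with the rank count: body-rod-bar rigidity requires rank $D|B|+(D-1)|H|-D$ while body-hinge rigidity requires rank $D|B|-D$.

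Given this equivalence, the corollary follows from applying Corollary~\ref{cor:body_rod_bar} to $(G',\bq,\br)$: the body-rod-bar framework on $(D-1)\circ G$ is infinitesimally rigid for generic configurations if and only if $(D-1)\circ G$ contains a spanning edge subset $I$ of size $D|B|+(D-1)|H|-D$ satisfying $|F|\le D|B(F)|+(D-1)|H(F)|-D$ for every nonempty $F\subseteq I$. Since infinitesimal rigidity is a Zariski-open property in configuration space, existence of some rigid $\bh$ is equivalent to rigidity for generic $\bh$, which closes the biconditional.

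The main obstacle is the rank claim in Step 1, namely that $D-1$ generic bars in $Gr(2,W)\cap H_{\br}(h)$ span the full hyperplane $\br_h^{\perp}$. This would fail if the intersection variety $Gr(2,W)\cap H_{\br}(h)$ were contained in a proper linear subspace of $H_{\br}(h)$; however, $Gr(2,W)$ is an irreducible quadric that is not contained in any hyperplane of $\mathbb{P}(\bigwedge^2 W)$, and a dimension count on the intersection (parallel to the genericity argument in the proof of Theorem~\ref{theorem:main1}) shows that non-degeneracy is a Zariski-open condition, so the required linear spanning holds for generic $\bh$ and generic bar choices.
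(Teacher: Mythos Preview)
Your proposal is correct and follows essentially the same route as the paper: treat each hinge as a rod, replace every body--hinge incidence by $D-1$ parallel bars, and invoke the body-rod-bar characterization (the paper cites Theorem~\ref{theorem:main} directly, you cite its rigidity reformulation Corollary~\ref{cor:body_rod_bar}). The paper compresses your Step~1 into the single assertion that the body-hinge framework ``is equal to'' the associated body-rod-bar framework, while you unpack the motion correspondence explicitly; your spanning argument for $Gr(2,W)\cap H_{\br}(h)$ is exactly the genericity fact the paper is tacitly using.
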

\begin{proof}
Let $(G,\bh)$ be an identified body-hinge framework.
For an edge $e=uv\in E$ with $u\in H$ and $v\in B$,
we can regard $\bh(u)$ as a rod (generically) linked by $(D-1)$ bars with the body associated with $v$ (see Figure~\ref{fig:hinge_to_rod}).
Hence, the identified body-hinge framework $(G,\bh)$ is equal to 
the body-rod-bar framework $(G',\bq,\br)$,
where $G'$ is the graph with $V(G')=B\cup H$ and $E(G')=(D-1)\circ E$, $\br=\bh$, and  $\bq$ is a generic bar-configuration.
Since $D(|B(F)\cup H(F)|-1)-|H(F)|=D|B(F)|+(D-1)|H(F)|-D$ for any $F\subseteq (D-1)\circ E$,
the statement follows from Theorem~\ref{theorem:main}.
\end{proof}
The proof can be extended to frameworks consisting of bodies, rods, bars, and hinges without difficulty.

\begin{figure}[t]
\centering
\includegraphics[scale=0.8]{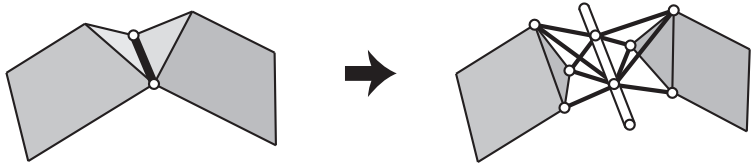}
\caption{Conversion of the body-hinge model to the body-rod-bar model.}
\label{fig:hinge_to_rod}
\end{figure}

Katoh and Tanigawa~\cite{molecular} showed that,
if each hinge is allowed to connect only two bodies, then each body can be realized as a rigid panel (i.e., a hyperplane).
Namely, a panel-hinge framework, which consists of rigid panels connected by hinges, 
is generically characterized by the counting condition of Corollary~\ref{coro:body_hinge}.
A natural question is whether we can drop the restriction or not.
\begin{problem}
Let $G=(B,H;E)$ be a bipartite graph  satisfying the counting condition of Corollary~\ref{coro:body_hinge}.
Is there a hinge-configuration $\bh$ such that $(G,\bh)$ is an infinitesimally rigid panel-hinge framework?
\end{problem}
Indeed, this problem was already discussed in, e.g.,~\cite{tay:whiteley:84,tay:89,Whiteley:89}
and is unsolved even for $2$-dimensional case.
In~\cite{Whiteley:89}, Whiteley presented a partial solution  for $2$-dimensional case. 

In the context of combinatorial rigidity, three types of characterizations are typically considered;
Maxwell/Laman-type counting conditions, Henneberg-type graph constructions, and tree-decompositions.
In particular, tree-decompositions often  provide very short proofs for combinatorial characterizations. See, e.g.,~\cite{whiteley:88,tay:93,Jackson:07}.
It is hence natural to ask a tree-decomposition for identified body-hinge frameworks, which leads to Corollary~\ref{coro:body_hinge}.
\begin{problem}
Let $G=(B,H;E)$ be a bipartite graph.
Suppose there is an edge set $I\subseteq (D-1)\circ E$ such that
$|I|=D|B|+(D-1)|H|-D$ and $|F|\leq D|B(F)|+(D-1)|H(F)|-D$ for each nonempty $F\subseteq I$.
Then, does $(D-1)\circ G$ contain $D$ edge-disjoint trees such that
each vertex of $B$ is spanned by all of them and each vertex of $H$ is spanned by exactly $D-1$ trees among them.
\end{problem}
The problem may be false since the problem of deciding whether a hypergraph contains $k$ edge-disjoint spanning connected subgraphs
is NP-hard even for $k=2$~\cite{frank:2003}.

As for computational issue, $O(|V|^2)$ time algorithms are known for computing the rank of the counting (poly)matroids appeared in this paper 
(see, e.g.,~\cite{Imai:1983,gabow:1992,lee:streinu:2005,berg:jordan:2003b} for more detail).
Developing a sub-quadratic algorithm is indeed a challenging problem.

\section{Direction-rigidity}
\label{sec:direction}
As a direct application of Dilworth truncation, 
we shall briefly discuss direction-rigidity of bar-joint frameworks.

Recall that a {\em $d$-dimensional bar-joint framework} is a pair $(G,\bp)$, where $G=(V,E)$ is a graph and 
$\bp:V\rightarrow\mathbb{R}^d$. 
Each vertex represents a joint and each edge represents a bar
which usually constraints the distance between two endpoints.
As a variant of length-constraint, direction-constraint (and the mixture of length and direction constraints) 
has been considered in the literature (see, e.g.,~\cite{Whitley:1997,jackson2010globally,Servatius:1999}).
In~\cite{Whitley:1997}, Whiteley showed a combinatorial characterization of direction-rigidity 
as a corollary of a combinatorial characterization of reconstructivity of pictures appeared in scene analysis (see, e.g.,\cite{Whiteley:89,Whitley:1997,whiteley:hand}).
In this section we provide a direct proof of this characterization.

For a $d$-dimensional bar-joint framework $(G,\bp)$, 
an infinitesimal motion $\bmm:V\rightarrow \mathbb{R}^d$ of $(G,\bp)$ under direction-constraint
is an assignment of $\bmm(v)\in \mathbb{R}^d$ to each $v\in V$ such that
$\bmm(u)-\bmm(v)$ is parallel to  $\bp(u)-\bp(v)$ for any $uv\in E$,
i.e., $\bmm(u)-\bmm(v)=t(\bp(u)-\bp(v))$ for some $t\in \mathbb{R}$.
Of course, the direction-constraint for each $uv\in E$ can be written as
\begin{equation}
\label{eq:direction}
(\bmm(u)-\bmm(v))\cdot {\bm \alpha}=0 \qquad  \text{for any ${\bm \alpha}\in \mathbb{R}^d$ with $(\bp(u)-\bp(v))\cdot {\bm \alpha}=0$}. 
\end{equation}

It is easy to observe that the space of infinitesimal motions of $(G,\bp)$ has dimension at least $d+1$;
a linear combination of parallel transformations to $d$ directions and the dilation  centered at the origin 
(see, e.g.,~\cite[Section~8]{Whitley:1997} for more detail).
We say that $(G,\bp)$ is {\em direction-rigid} if the dimension of the motion space is exactly $d+1$.

In this section, we shall use $V_u$ to denote a $d$-dimensional vector space associated with $u$
(which was $D$-dimensional in the preceding sections),
and let $V_V$ denote the direct product of $V_u$ for all $u\in V$.
Hence, $V_V$ is $d|V|$-dimensional in this case.
For each $uv\in E$, let us define a $(d-2)$-dimensional flat of $\mathbb{P}(V_V)$ by 
\begin{equation}
\label{eq:direction_flat}
A_{uv}(\bp)=\{ 
[\Bvector{}{0}, \Bvector{}{\cdots}, \Bvector{}{0}, 
\Bvector{u}{\bm \alpha}, \Bvector{}{0}, \Bvector{}{\cdots}, \Bvector{}{0},
\Bvector{v}{-{\bm \alpha}}, \Bvector{}{0}, \Bvector{}{\cdots}, \Bvector{}{0}]
: {\bm \alpha}\in \mathbb{R}^d, \ (\bp(u)-\bp(v))\cdot {\bm \alpha}=0\},
\end{equation}
and let ${\cal A}(\bp)=\{A_e(\bp) : e\in E\}$.
Then, it is easy to see that direction-rigidity is characterized by the polymatroid ${\cal PM}({\cal A}(\bp))$
in the sense that $(G,\bp)$ is direction-rigid if and only if the rank of ${\cal PM}({\cal A}(\bp))$ is equal to $d|V|-(d+1)$.
The following theorem provides a combinatorial characterization of this polymatroid.

\begin{theorem}
\label{theorem:direction}
Let $f':2^E\rightarrow \mathbb{Z}$ be an integer-valued monotone submodular function defined by
\begin{equation}
f'(F)=d|V(F)|-(d+1) \qquad (F\subseteq E).
\end{equation}
Then, for almost all joint-configurations $\bp:V\rightarrow \mathbb{R}^d$, 
${\cal PM}({\cal A}(\bp))$ is equal to the polymatroid ${\cal PM}_{f'}(G)=(E,\hat{f'})$ induced by $f'$. 
\end{theorem}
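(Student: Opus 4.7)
The plan is to realize $\mathcal{A}(\bp)$ as the Dilworth truncation, by a single hyperplane, of a ``free'' flat family whose span is easy to compute, and then apply Theorem~\ref{theorem:truncation} directly. For each $e=uv\in E$, let
\[
\tilde{A}_e = \{[0,\dots,0,\bm\alpha,0,\dots,0,-\bm\alpha,0,\dots,0]\in \mathbb{P}(V_V) : \bm\alpha\in\mathbb{R}^d\}
\]
be the rank-$d$ flat obtained from (\ref{eq:direction_flat}) by dropping the orthogonality constraint, and write $\tilde{\mathcal{A}}=\{\tilde{A}_e : e\in E\}$. Arguing as in Subsection~\ref{subsec:graphic} with $D$ replaced by $d$, a direct computation yields
\[
\rank(\overline{\tilde{\mathcal{A}}_F}) = d(|V(F)|-c(F)),
\]
where $c(F)$ is the number of connected components of the graph $(V(F),F)$. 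Next, define the hyperplane $H_\bp\subset\mathbb{P}(V_V)$ by the single linear equation $\sum_{u\in V}\bp(u)\cdot x_u=0$, with $x_u$ denoting the $V_u$-block of coordinates of a point of $\mathbb{P}(V_V)$. Plugging the generic element of $\tilde{A}_{uv}$ into this equation gives $(\bp(u)-\bp(v))\cdot\bm\alpha=0$, which is exactly the direction constraint; hence $A_e(\bp)=\tilde{A}_e\cap H_\bp$ for every $e\in E$, and $\mathcal{A}(\bp)=\tilde{\mathcal{A}}\cap H_\bp$.

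Assume that $H_\bp$ is generic relative to $\tilde{\mathcal{A}}$ in the sense of (\ref{eq:hyperplane_generic})---this is the delicate point, addressed in the next paragraph. The genericity is inherited by every subfamily $\tilde{\mathcal{A}}_F$ with $F\subseteq E$, since (\ref{eq:hyperplane_generic}) for the subfamily is just a special case of that for the full family. Theorem~\ref{theorem:truncation} applied to each such $F$ therefore gives
\[
\rank(\overline{\mathcal{A}_F(\bp)}) = \rank(\overline{\tilde{\mathcal{A}}_F\cap H_\bp}) = \min\sum_{i=1}^k\bigl(d(|V(F_i)|-c(F_i))-1\bigr),
\]
the minimum being over partitions $\{F_1,\dots,F_k\}$ of $F$ into nonempty subsets. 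Whenever some part $F_i$ satisfies $c(F_i)\geq 2$, refining $F_i$ into its connected components strictly decreases the sum (by $c(F_i)-1$), so the optimum is attained by partitions whose parts each induce a connected subgraph. On such partitions each term equals $d(|V(F_i)|-1)-1 = f'(F_i)$, and we conclude $\rank(\overline{\mathcal{A}_F(\bp)})=\hat{f'}(F)$, which is precisely the rank function of ${\cal PM}_{f'}(G)$.

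The hard part is verifying that $H_\bp$ is generic relative to $\tilde{\mathcal{A}}$ for almost all $\bp$. The key observation is that the assignment $\bp\mapsto H_\bp$, viewed as a map from $(\mathbb{R}^d)^{|V|}$ to the projective space of hyperplanes of $\mathbb{P}(V_V)$, is surjective: every hyperplane has an equation of the form $\sum_u c_u\cdot x_u=0$ for some $(c_u)\in(\mathbb{R}^d)^{|V|}$, which is $H_\bp$ with $\bp(u)=c_u$. Since the set of hyperplanes failing (\ref{eq:hyperplane_generic}) relative to the finite family $\tilde{\mathcal{A}}$ is a proper algebraic subvariety of the parameter space of hyperplanes (as noted in Subsection~\ref{subsec:truncation}), its preimage under this linear surjection is a proper algebraic subvariety of $(\mathbb{R}^d)^{|V|}$, so almost all $\bp$ yield a generic $H_\bp$ and the argument above goes through.
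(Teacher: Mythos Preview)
Your proof is correct and follows essentially the same approach as the paper: both realize $\mathcal{A}(\bp)$ as the Dilworth truncation $\tilde{\mathcal{A}}\cap H_\bp$ of the same free flat family by the same hyperplane and then apply Theorem~\ref{theorem:truncation}. The only cosmetic differences are that the paper writes $\rank(\overline{\tilde{\mathcal{A}}_F})$ as $\widehat{dg}(F)$ (with $g=|V(\cdot)|-1$) and unwinds the nested minima, whereas you compute it directly as $d(|V(F)|-c(F))$ and simplify by refining to connected parts; your surjectivity argument for genericity of $H_\bp$ is likewise equivalent to the paper's use of algebraically independent coordinates for $\bp$.
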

\begin{proof}
We prove $\rank(\overline{{\cal A}_F(\bp)})=\hat{f'}(F)$ for any nonempty $F\subseteq E$ 
(see (\ref{eq:g_hat}) for the definition of $\hat{f}$).
%As usual, one direction is easy; for any $F\subseteq E$, $\rank(\overline{{\cal A}_F(p)})\leq \hat{f'}(F)$.
%We only prove the nontrivial direction ``$\geq$''.
The idea is exactly the same as the alternative proof of Laman's theorem by  Lov{\'a}sz and Yemini\cite{lovasz:1982}.

Recall that $g=|V(\cdot)|-1$ is the monotone submodular function inducing graphic matroid.
As mentioned in Section~\ref{subsec:graphic}, the union of $d$ copies of the graphic matroid is the matroid induced by $dg$ as well as 
the generic matroid associated with the family ${\cal A}=\{A_e: e\in E\}$ of flats
\begin{equation*}
A_{uv}=\{ 
[\Bvector{}{0}, \Bvector{}{\cdots}, \Bvector{}{0}, 
\Bvector{u}{\bm \alpha}, \Bvector{}{0}, \Bvector{}{\cdots}, \Bvector{}{0},
\Bvector{v}{-{\bm \alpha}}, \Bvector{}{0}, \Bvector{}{\cdots}, \Bvector{}{0}]
: {\bm \alpha}\in \mathbb{R}^d\}.
\end{equation*}
In other words,  ${\cal PM}({\cal A})=(E,\widehat{dg})$.

%Observe $f'(F)=dg(F)-1$ for $F\subseteq E$.
%We now show that ${\cal PM}({\cal A}(p))$ can be obtained from $(E,\widehat{dg})$ by Dilworth truncation. 
Denote $V=\{v_1,v_2,\dots, v_n\}$.
For $\bp:V\rightarrow \mathbb{R}^d$, we define a hyperplane $H$ of $\mathbb{P}(V_V)$ by 
\begin{equation*}
H=\{[x_{v_1},x_{v_2},\dots, x_{v_n}] :  {\bm x}_{v}\in V_{v}=\mathbb{R}^d, \mbox{$\sum_{v\in V} \bp(v)\cdot {\bm x}_{v}=0$}\}.
\end{equation*}
Then, observe $A_e(\bp)=A_e\cap H$ for any $e\in E$.
Therefore, if we take $\bp$ so that the set of coordinates of $\bp$ is algebraically independent over $\mathbb{Q}$,
we found that ${\cal PM}({\cal A}(\bp))$ is obtained from ${\cal PM}({\cal A})$ by Dilworth truncation.
By Theorem~\ref{theorem:truncation}, we obtain, for any $F\subseteq E$,
\begin{align*}
\rank(\overline{{\cal A}_F(\bp)})&=\mbox{$\min\{\sum_i(\rank(\overline{{\cal A}_{F_i}})-1) : $ a partition $\{F_1,\dots, F_k\}$ of $F\}$} \\
&=\mbox{$\min\{ \sum_i (\widehat{dg}(F_i)-1) :$ a partition $\{F_1,\dots, F_k\}$ of $F\}$} \\
 &=\mbox{$\min\{\sum_i((\min\{\sum_j dg(F_{i,j}): $ a partition of $F_i \})-1):$ a partition of $F \}$} \\
 &= \mbox{$\min\{\sum_i(dg(F_{i})-1):$ a partition $\{F_1,\dots, F_k\}$ of $F\}$} \\
&=\mbox{$\min\{\sum_i f'(F_{i}):$ a partition $\{F_1,\dots, F_k\}$ of $F\}$}=\hat{f'}(F),
\end{align*}
where we used $f'(F)=dg(F)-1$.
This completes the proof.
\end{proof}

Let $(d-1)\circ G$ be the graph obtained from $G$ by replacing each edge by $(d-1)$ copies, and let $(d-1)\circ E$ be the edge set.
Notice $f'(e)=d-1$ for any $e\in E$.
Hence, applying the same argument given in Lemma~\ref{lemma:circ}, it is not difficult to see that the rank of ${\cal PM}_{f'}(G)=(E,\hat{f'})$ is equal to the rank of 
${\cal M}_{f'}((d-1)\circ G)$, that is, 
the matroid on $(d-1)\circ E$ induced by $f'$.
Thus, Theorem~\ref{theorem:direction} implies a combinatorial characterization of direction-rigidity of bar-joint frameworks proved by Whiteley~\cite{Whitley:1997}.
\begin{corollary}[Whiteley\cite{Whitley:1997}]
\label{corollary}
For almost all joint-configurations $\bp:V\rightarrow \mathbb{R}^d$, 
$(G,\bp)$ is direction-rigid if and only if 
$(d-1)\circ G$ contains an edge subset $I\subseteq (d-1)\circ E$ satisfying the following counting conditions:
\begin{itemize}
\item $|I|=d|V|-(d+1)$;
\item $|F|\leq d|V(F)|-(d+1)$ for any nonempty  $F\subseteq E$. 
\end{itemize}
\end{corollary}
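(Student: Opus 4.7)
The plan is to deduce the corollary as a direct consequence of Theorem~\ref{theorem:direction} together with the polymatroid-to-matroid reduction already established in Lemma~\ref{lemma:circ}. First I would observe that direction-rigidity of $(G,\bp)$ is by definition the statement that the space of solutions of (\ref{eq:direction}) has dimension exactly $d+1$, i.e., that the linear polymatroid ${\cal PM}({\cal A}(\bp))$ defined via (\ref{eq:direction_flat}) attains rank $d|V|-(d+1)$. Hence the task is to translate the rank formula of Theorem~\ref{theorem:direction} into a combinatorial condition on $(d-1)\circ G$.

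Next I would invoke Theorem~\ref{theorem:direction}: for almost all $\bp$, $\rank(\overline{{\cal A}(\bp)}) = \hat{f'}(E)$. Then, as noted in the paragraph preceding the corollary, the identity $f'(e)=d-1$ for every $e\in E$ allows the argument of Lemma~\ref{lemma:circ} to go through verbatim with $f'$ in place of $f$: namely, any optimal partition in $\hat{f'}(E) = \min\{\sum_i f'(F_i)\}$ can be lifted to an optimal partition of $(d-1)\circ E$ with blocks of the form $(d-1)\circ F_i$, giving
\[
\hat{f'}(E) \;=\; r_{f'}((d-1)\circ E),
\]
where $r_{f'}$ is the rank function of ${\cal M}_{f'}((d-1)\circ G)$, the matroid induced by $f'$ on $(d-1)\circ E$.

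By the Edmonds--Rota description of the matroid induced by a submodular function (recalled in Subsection on induced polymatroids), $I\subseteq (d-1)\circ E$ is independent in ${\cal M}_{f'}((d-1)\circ G)$ if and only if $|F|\leq d|V(F)|-(d+1)$ for every nonempty $F\subseteq I$. Therefore $(d-1)\circ G$ contains a subset $I$ with $|I|=d|V|-(d+1)$ satisfying the given inequalities precisely when the rank of ${\cal M}_{f'}((d-1)\circ G)$ equals $d|V|-(d+1)$, which by the chain of equalities above is equivalent to $\rank(\overline{{\cal A}(\bp)}) = d|V|-(d+1)$ for a generic $\bp$, i.e., to direction-rigidity of $(G,\bp)$.

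Since every ingredient is already in place, the proof is really only an assembly argument; the only point that requires a small verification is the analogue of Lemma~\ref{lemma:circ}, and even that is explicitly flagged as routine in the preceding paragraph. Thus I do not anticipate a genuine obstacle, and the proof reduces to a two-line chain of equivalences once the dimension interpretation of direction-rigidity is spelled out.
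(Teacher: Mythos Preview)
Your proposal is correct and follows exactly the route the paper takes: invoke Theorem~\ref{theorem:direction} to identify $\rank(\overline{{\cal A}(\bp)})$ with $\hat{f'}(E)$, then use the $f'(e)=d-1$ observation and the Lemma~\ref{lemma:circ} argument to pass to $r_{f'}((d-1)\circ E)$, and finally read off the counting condition from the Edmonds--Rota description of independence in ${\cal M}_{f'}((d-1)\circ G)$. The paper presents this derivation in the paragraph immediately preceding the corollary rather than as a formal proof, but the content is identical.
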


Servatius and Whiteley~\cite{Servatius:1999} further proved a combinatorial characterization of generic rigidity of two-dimensional bar-joint frameworks having both length and direction constraints.
It can be observed that the representation of the associated rigidity matrix can be obtained from the representation of the union of two copies of the graphic matroid by 
restricting  some of rows to a generic hyperplane $H$ and the others to a hyperplane (determined by $H$).
It is still unclear why Theorem~\ref{theorem:truncation} can be extended in this situation.

\section*{Acknowledgments}
The work was supported by Grant-in-Aid for JSPS Research Fellowships for Young Scientists.

\bibliographystyle{abbrv}
\bibliography{tani20120410}

\appendix
\section{Description of Bar-constraints}
\label{app:bar_description}
Here we give a note on how to obtain bar-constraints (\ref{eq:bar_constraint}). 
This note also appears in~\cite[Appendix]{rooted_tree}.

We can coordinatize the exterior product $\mathbb{R}^{d}\wedge \mathbb{R}^d$ as follows:
For $a=(a_1,a_2,\dots, a_d)\in \mathbb{R}^d$ and $b=(b_1,b_2,\dots, b_d)\in \mathbb{R}^d$, 
\begin{equation}
a\wedge b= 
\Bigg(
\Bvector{(1,2)}{\begin{vmatrix} a_1 & a_2 \\ b_1 & b_2 \end{vmatrix}}, 
\Bvector{(1,3)}{-\begin{vmatrix}a_1 & a_3 \\ b_1 & b_3 \end{vmatrix}}, 
\Bvector{}{\ \ \cdots \ \ },  
\Bvector{(i,j)}{(-1)^{i+j+1}\begin{vmatrix}a_i & a_j \\ b_i & b_j \end{vmatrix}}, 
\Bvector{}{\ \ \cdots \ \ }, 
\Bvector{(d-1,d)}{\begin{vmatrix}a_{d-1} & a_d \\ b_{d-1} & b_d \end{vmatrix}} 
\Bigg)\in \mathbb{R}^{{d \choose 2}}.
\end{equation}

\vspace{\baselineskip}

Suppose we are given
rigid bodies $B_1$ and $B_2$ in $\mathbb{R}^d$, which can be identified
with a pair $(p_i,M_i)$ of a point $p_i\in \mathbb{R}^d$ and an orthogonal matrix $M_i\in SO(d)$ for each $i=1,2$.
Namely, each $(p_i,M_i)$ is a local Cartesian coordinate system for each body.
We consider a situation, where the bodies $B_1$ and $B_2$ are connected by a bar.
We denote the endpoints of the bars by $p_1+M_1q_1$ and $p_2+M_2q_2$, where $q_i$ is the coordinate of each endpoint (joint) in the coordinate system 
of each body.

The constraint by the bar can be written by
\begin{equation}
\langle p_2+M_2q_2-p_1-M_1q_1,
p_2+M_2q_2-p_1-M_1q_1\rangle
=\ell^2
\end{equation}
for some $\ell\in \mathbb{R}$.
If we take the differentiation with variables $p_i$ and $M_i$, we get 
\begin{equation}
\langle p_2+M_2q_2-p_1-M_1q_1,
\dot{p}_2+{\dot M}_2q_2-\dot{p}_1-\dot{M}_1q_1 \rangle   
=0
\end{equation}
We may simply assume $p_i=0$ and $M_i=I_d$. Then by setting $h=q_2-q_1$ and $\dot{M}_i=A_i$ with a skew-symmetric matrix $A_i$,
\begin{equation}
\label{eq:const}
\langle h , \dot{p}_2+A_2q_2-\dot{p}_1-A_1q_1\rangle =0.
\end{equation}

Also we denote a skew-symmetric matrix $A$ by  
\begin{equation}
A=\begin{pmatrix} 
0 & -w_{1,2} & \cdots & \cdots & \cdots & \cdots & (-1)^{d+1}w_{1,d} \\ 
w_{1,2} & 0 &  &   &       &        & \vdots \\
 &    &         &   &       &        & \\
\vdots &    &  \ddots       &         & (-1)^{i+j}w_{i,j} &  & \vdots \\
 &    &         &   &       &        & \\
\vdots &    &   &     0    &       &        & \vdots \\
 &    &         &   &       &        & \\
\vdots &    & (-1)^{i+j+1}w_{i,j} &   &   \ddots     &    &  \vdots \\ 
 &    &         &   &       &        & \\
\vdots       &    &         &         &       &     0   & w_{d-1,d} \\
(-1)^{d}w_{1,d}       & \cdots    &   \cdots      &  \cdots       &  \cdots     &  -w_{d-1,d}      & 0   \\
\end{pmatrix}
\end{equation}
and let  $w=\begin{pmatrix}w_{1,2} & w_{1,3} & \cdots & w_{d-1,d}\end{pmatrix}\in \mathbb{R}^{{d \choose 2}}$.
Then, for any $h\in \mathbb{R}^d$ and $q\in \mathbb{R}^d$, we have
\begin{equation}
\label{eq:1}
\langle h, Aq \rangle=\langle q\wedge h, w \rangle.
\end{equation}
Therefore, we can simply describe the infinitesimal bar-constraint (\ref{eq:const}) by 
\begin{equation}
\label{eq:const2}
\langle q_2-q_1, \dot{p}_2-\dot{p}_1\rangle + \langle q_2\wedge q_1, w_2-w_1 \rangle =0,
\end{equation}
where $w_1\in \mathbb{R}^{{d\choose 2}}$ and $w_2\in \mathbb{R}^{{d\choose 2}}$ denote the ${d\choose 2}$-dimensional vectors corresponding to $A_1$ and $A_2$, respectively. 

We call a pair $s_i=(w_i,p_i)\in \mathbb{R}^{d\choose 2}\times \mathbb{R}^d$ a {\em screw motion}, which can be identified with a vector in $\bigwedge^{d-1} \mathbb{R}^{d+1}$. 
Using the homogeneous coordinate of $q_i$ in $\mathbb{P}^d$, (\ref{eq:const2}) is written as 
\begin{equation}
\label{eq:const4}
\langle (q_2,1)\wedge (q_1,1)), s_2-s_1\rangle =0,
\end{equation}
where $[(q_2,1)\wedge (q_1,1)]$ is the Pl{\"u}cker coordinate of the corresponding bar.
\end{document}